\newtheorem{theorem}{Theorem}[section]
\newtheorem{proposition}[theorem]{Proposition}
\newtheorem{lemma}[theorem]{Lemma}
\newtheorem{definition}[theorem]{Definition}
\newtheorem{remark}[theorem]{Remark}
\def\be#1 {\begin{equation} \label{#1}}
\newcommand{\ee}{\end{equation}}
\def\sqw{\hbox{\rlap{\leavevmode\raise.3ex\hbox{$\sqcap$}}$%
\sqcup$}}
\def\findem{\ifmmode\sqw\else{\ifhmode\unskip\fi\nobreak\hfil
\penalty50\hskip1em\null\nobreak\hfil\sqw
\parfillskip=0pt\finalhyphendemerits=0\endgraf}\fi}
\newcommand{\R}{{\mathbb {R}}}
\newcommand{\Z}{{\mathbb Z}}
\newcommand{\C}{{\mathbb C}}
\DeclareMathOperator*{\essup}{ess\,sup}
\author{Luz Roncal, Saurabh Shrivastava and Kalachand Shuin}
\address{
}
\email{}
\address[Saurabh Shrivastava and Kalachand Shuin]{
Department of Mathematics\\
Indian Institute Science Education and Research Bhopal\\
Bhopal-462066, India}
\email{\{saurabhk,kalachand16\}@iiserb.ac.in}
\address[Luz Roncal]{BCAM - Basque Center for Applied Mathematics \\
48009 Bilbao, Spain and Ikerbasque, Basque Foundation for Science, 48011 Bilbao, Spain}
\email{lroncal@bcamath.org}
\keywords{Bilinear spherical maximal functions, Bilinear weights, Sparse forms}
\subjclass[2010]{Primary 42B25, Secondary 46E35}
\date{\today}
\begin{document}

\title[Bilinear spherical maximal functions of product type]{Bilinear spherical maximal functions of product type}

\begin{abstract}
In this paper we introduce and study a bilinear spherical maximal function of product type in the spirit of bilinear Calder\'{o}n--Zygmund theory. This operator is different from the bilinear spherical maximal function considered by Geba et al. in \cite{GGIP}.
We deal with lacunary and full versions of this operator, and we prove weighted estimates with respect to bilinear weights. Our approach involves sparse forms following ideas by Lacey \cite{Lacey}, but we also use other techniques to handle the particular triplet $(2,2,1)$.
\end{abstract}
\maketitle

\section{Introduction}
The theory of multilinear operators has been an active area of research for the past two decades in harmonic analysis. It finds its roots in the pioneer work by Coifman and Meyer \cite{CM}, although it was the remarkable proof of the boundedness of the bilinear Hilbert transform by Lacey and Thiele \cite{LT1, LT2} that provided the motivation for the study of multilinear singular integrals. The multilinear Calder\'{o}n-Zygmund operators were systematically treated in ~\cite{GT} and later on, in \cite{Lerner},  Lerner et al. developed an appropriate theory of multilinear maximal functions and multilinear weights. In particular, they established weighted boundedness for multilinear Calder\'{o}n-Zygmund operators. Since then there have been several developments in the weighted theory of multilinear weights, we emphasize the recent works~\cite{LMO, Nie} and references therein. 

For notational convenience we shall restrict ourselves to the bilinear setting in this paper. Given locally integrable functions $f_1$ and $f_2$ defined on $\R^n$, the bilinear maximal function $\mathcal{M}(f_1, f_2)$ is defined by
\begin{equation}
\label{maxib}
\mathcal{M}(f_1, f_2)(x):=\sup_{Q \ni x}\prod_{i=1}^{2} \frac{1}{|Q|} \int_{Q} |f_{i}(y_{i})|  \,d y_{i},
\end{equation}
where the supremum in the above is taken over all cubes $Q$ in $\R^n$ containing the point $x$. The cubes are always assumed to have their sides parallel to coordinate axes.    

Note that the bilinear maximal operator $\mathcal M$ is dominated by the product of the classical Hardy-Littlewood maximal functions in a pointwise manner, i.e., 
$$
\mathcal M(f_1, f_2)\leq~M(f_1) M(f_2),
$$ 
where $M$ denotes the Hardy-Littlewood maximal operator given by  
$$
M(f)(x):=\sup_{Q \ni x}\frac{1}{|Q|} \int_{Q} |f(y)|  \,d y.
$$
Let $1 < p_{1}, p_{2}< \infty$ and $p$ be such that $\frac{1}{p}=\frac{1}{p_{1}}+\frac{1}{p_{2}}$. H\"{o}lder's inequality yields that the operator $\mathcal M$ is bounded from $L^{p_1}(w_1)\times L^{p_2}(w_2)\rightarrow L^{p}(w)$ for all $w_i\in A_{p_i}$, $i=1,2$, and $w=\prod_{j=1}^{2} w_{j}^{p/p_{j}}$. Here $A_p$ denotes the class of Muckenhoupt weights, see Subsection \ref{subsec:weights}.

In~\cite{Lerner}, the authors showed that the bilinear maximal operator $\mathcal M$ is the appropriate analogue of the classical Hardy--Littlewood maximal operator. They introduced a suitable analogue of Muckenhoupt weights in the bilinear setting, the class $A_{\vec{P}}$ (see Subsection \ref{subsec:weights}), and showed that the class $A_{\vec{P}}$ is bigger than the product of corresponding linear $A_p$ classes. The class $A_{\vec{P}}$ characterizes the weighted boundedness of the bilinear maximal operator $\mathcal M$. Moreover, the bilinear Calder\'{o}n-Zygmund operators possess weighted boundedness with respect to bilinear weights in $A_{\vec{P}}$. We refer the reader to~\cite{DLP, LN, Lerner} for more details. 

Later on, first in~\cite{LMO} and then in \cite{LMO1, Nie}, the notion of bilinear (or multilinear) weights was further generalised and extrapolation results were proved, see Subsection \ref{subsec:weights}. 

Motivated from the discussions above, in this paper we introduce a bilinear spherical maximal function of product type in the spirit of Calder\'on--Zygmund theory and investigate its weighted boundedness with respect to the bilinear weights just mentioned.

\subsection{Linear spherical maximal functions and bilinear product-type analogues} 
\label{subsec:spherical}

Let $f:\R^n\rightarrow \C$ be a measurable function. Consider the average of $f$ over the sphere of radius $0<r<\infty$ given by 
$$
\mathcal{A}_{r}f(x)=\int_{\mathbb{S}^{n-1}}f(x-ry)\,d{\sigma_{n-1}}(y),
$$ 
where $d\sigma_{n-1}$ is the normalized rotation invariant surface measure on the sphere $\mathbb{S}^{n-1}:=\{x\in \R^n: \|x\|=1\}$. 
The spherical maximal function was introduced by Stein~\cite{Stein} and is defined as 
$$
M_{\operatorname{full}}(f)(x):=\sup_{r>0}\mathcal{A}_{r}f(x), \quad x\in \R^n.
$$
Stein proved that $M_{\operatorname{full}}$ is bounded in $L^p(\R^n)$ if and only if $\frac{n}{n-1}<p\leq \infty$ for all $n\geq 3.$ The problem in dimension $n=2$ was settled later by Bourgain~\cite{Bourgain} (we refer to~\cite{MSS} for a different proof of Bourgain's result).

The dyadic or lacunary version of the spherical maximal function results by taking the supremum over the set $\{2^j:j\in \Z\}$, i.e., 
$$
M_{\operatorname{lac}}(f)(x)=\sup_{j\in \Z}\mathcal{A}_{2^j}f(x).
$$
The lacunary spherical maximal operator $M_{\operatorname{lac}}$ is bounded in $L^p(\R^n)$ for all $1<p\leq \infty$ and $n\geq 2$, see~\cite{Calderon2,CW} for details. Weighted boundedness properties of the spherical maximal operators have been studied in~\cite{CCG, DMO, DV,Manna}. 

In a recent article, Lacey \cite{Lacey} revisited the spherical maximal function and, using a new approach that unified the lacunary and full versions, he  managed to prove sparse bounds for these operators which led him to obtain new weighted norm inequalities. We also refer to~\cite{Lacey} for a discussion about the suitability of $A_p$ weights in the context of the spherical maximal function.

In this paper we introduce a bilinear analogue of the spherical maximal function in the spirit of the bilinear Hardy-Littlewood maximal function \eqref{maxib}, which plays a key role in the theory of bilinear Calder\'{o}n--Zygmund operators. Define 
\begin{equation*}
\mathcal{M}_{\operatorname{full}}(f_{1},f_{2})(x):=\sup_{t>0}\mathcal{A}_{t}f_{1}\mathcal{A}_{t}f_{2}(x).
\end{equation*} 
As earlier, if we take the supremum in the above over the dyadic numbers, we get bilinear analogue of the lacunary spherical maximal function. This way, the bilinear lacunary spherical maximal operator $\mathcal{M}_{\operatorname{lac}}$ is defined as 
$$
\mathcal{M}_{\operatorname{lac}}(f_{1},f_{2})(x):=\sup_{j\in \mathbb{Z}}\mathcal{A}_{2^{j}}f_{1}\mathcal{A}_{2^{j}}f_{2}(x).
$$
We refer to these operators as \textit{bilinear spherical maximal functions of product type}.

Note that $\mathcal{M}_{\operatorname{full}}(f_{1},f_{2})$ (and $\mathcal{M}_{\operatorname{lac}}(f_{1},f_{2})$) is dominated by the product of the linear full (respectively lacunary) spherical maximal functions in a pointwise sense. 
Therefore, H\"{o}lder's inequality immediately yields the $L^{p_1}\times L^{p_2}\rightarrow L^p$ estimates for the operators $\mathcal{M}_{\operatorname{full}}$ and $\mathcal{M}_{\operatorname{lac}}$. In fact, we also get the weighted estimates for the operator with respect to product weights, see Theorem \ref{product}. We will prove new weighted estimates for the bilinear spherical maximal functions with respect to bilinear weights that are beyond the type of weights as described in Theorem~\ref{product}. This result is stated in Theorem~\ref{lacweighted}: We exploit the ideas from~\cite{Lacey} and establish a sparse domination principle for the bilinear spherical maximal functions in Theorem \ref{mainthm1:lac} so that we deduce weighted estimates as a consequence of known results in the literature. On the other hand, in Theorem  \ref{radial} we will provide weighted estimates for the triplet $(2,2,1)$ that cannot be deduced from the sparse domination.

A different analogue of the spherical maximal function in the bilinear setting has been studied in the literature. It was introduced in~\cite{GGIP} and is defined as follows:
\begin{equation}
\label{MGeba}
\mathcal{M}_{\operatorname{sph}}(f_{1},f_{2})(x):=\sup_{t>0}\int_{\mathbb S^{2n-1}}|f_{1}(x-ty)f_{2}(x-tz)|d\sigma_{2n-1}(y,z).
\end{equation}  
In ~\cite{BGHH, GHH} the authors proved partial results obtaining $L^{p_1}\times L^{p_2}\rightarrow L^p$ estimates for the operator $\mathcal{M}_{\operatorname{sph}}$ for a certain range of $p_1, p_2$ and $p$ and some assumptions on the dimension $n$. In~\cite{JL} the authors proved the following pointwise domination result
\begin{equation}
\label{jeole}
\mathcal{M}_{\operatorname{sph}}(f_{1},f_{2})(x)\lesssim M_{\operatorname{full}} (f_1)(x) M(f_2)(x),
\end{equation}
and extended the $L^{p_1}\times L^{p_2}\rightarrow L^p$ estimates for the operator $\mathcal{M}_{\operatorname{sph}}$ to the best possible range of exponents $p_1, p_2$ and $p$ for all $n\geq 2$ (note that an estimate similar to \eqref{jeole} holds with the roles of $M_{\operatorname{full}}$ and $M$ interchanged due to symmetry). 
We also refer to the recent papers~\cite{AP, Dosidis} for the generalisation of the bilinear spherical maximal function to the multilinear setting. Weighted estimates for the bilinear maximal operator $\mathcal{M}_{\operatorname{sph}}$ defined in \eqref{MGeba} beyond the ones that can be obtained trivially from the pointwise estimate \eqref{jeole} remain as an open problem.

The paper is organised as follows. We state the main results in the next section, then in Section~\ref{notdef} we recall necessary definitions and results and also set notation that we use in the paper. Section~\ref{proofwe} is devoted to prove weighted estimates for the operators under consideration and we complete the proofs of Theorems~\ref{lacweighted} and \ref{radial} in this section.  In Section~\ref{compar} we discuss some examples comparing the weighted results obtained in Theorem~\ref{lacweighted} with the H\"{o}lder type results. Next, in Section~\ref{sec:proof} we give the proof of sparse domination result Theorem~\ref{mainthm1:lac}. Finally, in Section \ref{neces} we provide the necessity of some conditions for such a sparse domination.
\section{Main results}
\label{subsec:sparse}
Our first main result is the following theorem containing weighted estimates for the product type operators with bilinear weights in the class defined in Definition \ref{lmodef}. In what follows, we will denote by $L_n$ the triangle with vertexes $(0,1), (1,0)$ and $\big(\frac{n}{n+1}, \frac{n}{n+1}\big)$ and by $F_n$ the trapezium with vertexes $(0,1)$, $\big(\frac{n-1}{n}, \frac{1}{n}\big)$, $\big(\frac{n-1}{n}, \frac{n-1}{n}\big)$ and $\big(\frac{n^2-n}{n^2+1}, \frac{n^2-n+2}{n^2+1}\big)$, see Figure \ref{LnFn}.

\begin{figure}
\includegraphics[scale=0.63]{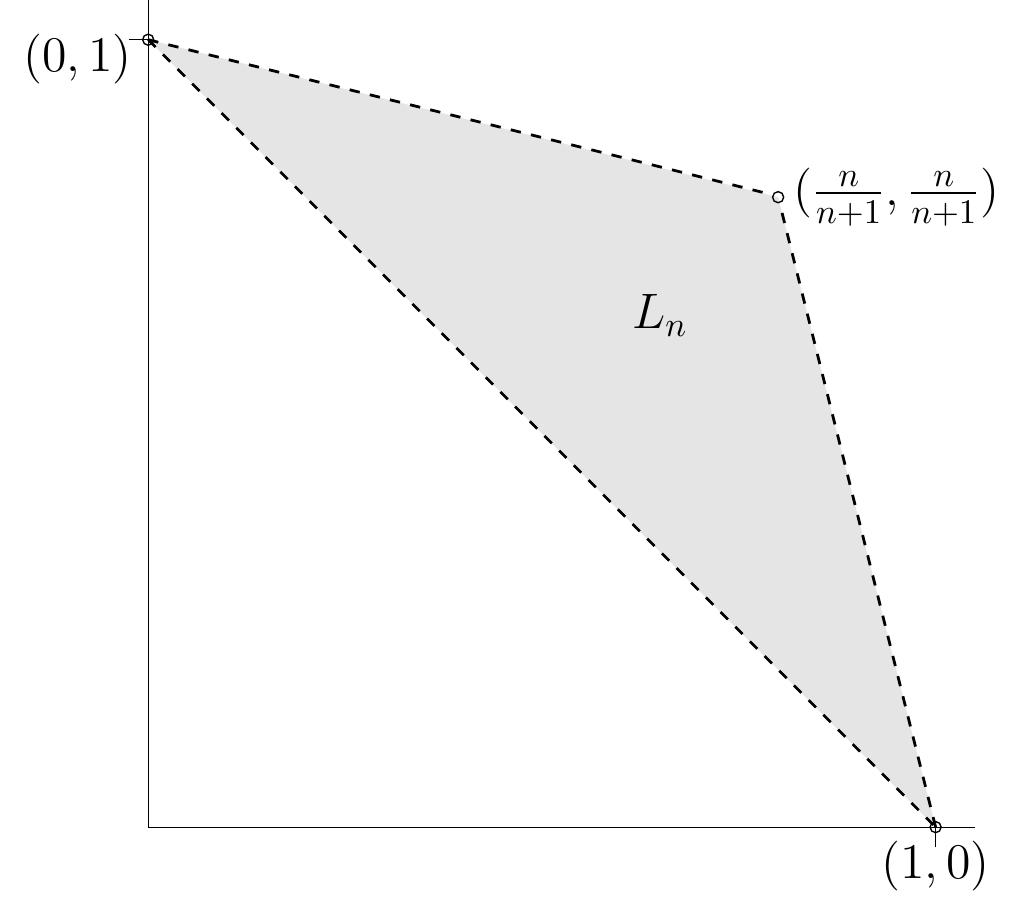}
\includegraphics[scale=0.63]{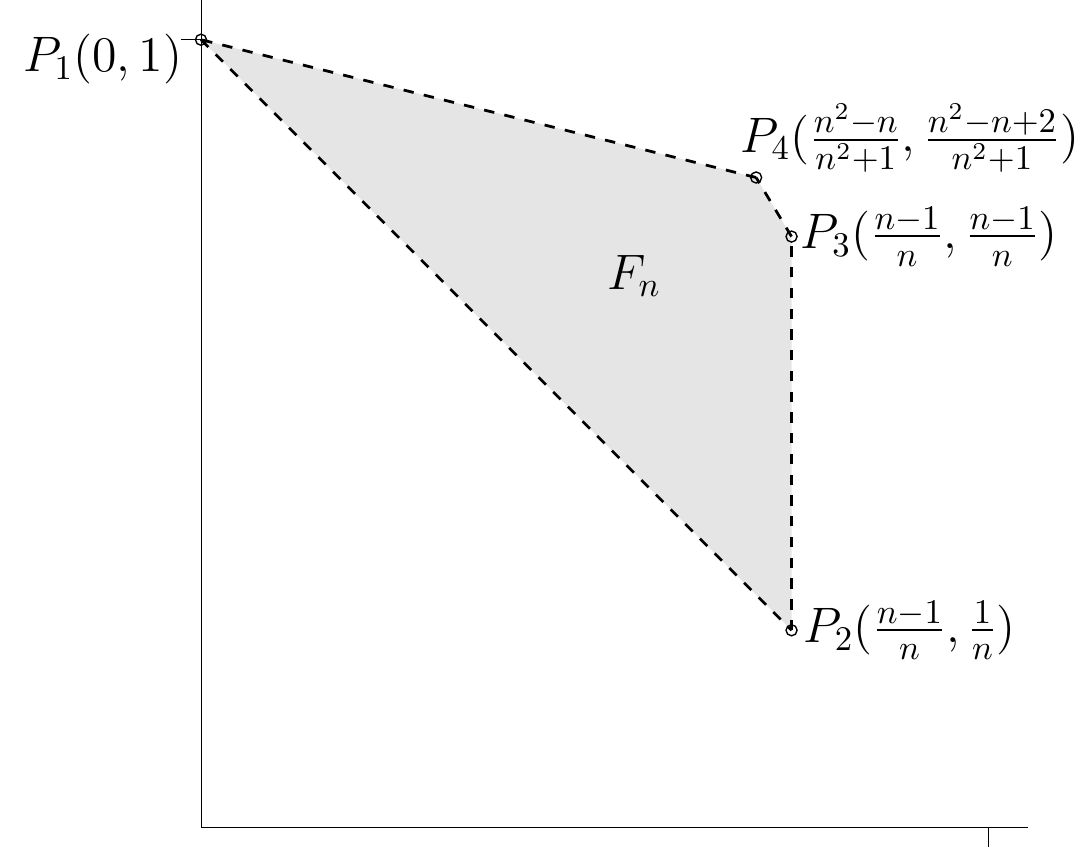}
\caption{Triangle $L_n$ on the left and trapezium $F_n$ on the right.}
\label{LnFn}
\end{figure}

\begin{theorem} 
\label{lacweighted}
Let $n\geq 2$. For  $i=1,2$, let $(\frac{1}{r_{i}},\frac{1}{s_{i}})$ be in the interior of $L_n$ (respectively $F_n$). Assume that
$\frac{1}{r_{1}}+\frac{1}{r_{2}}<1$ and $t=\frac{s_{1}s_{2}}{s_{1}+s_{2}-s_{1}s_{2}}>1$. Then for all $\vec{q}=(q_1,q_2)$, $\frac{1}{q}=\frac{1}{q_1}+\frac{1}{q_2}$ with $r_{i} \leq q_{i}$, $i=1,2$, and $t'>q$,  the operator $\mathcal M_{\operatorname{lac}}$ (respectively $\mathcal M_{\operatorname{full}}$) extends to a bounded operator from $L^{q_1}(w_1)\times L^{q_2}(w_2) \rightarrow L^{q}(w)$, i.e.,  
\begin{equation*}
	\| \mathcal{M}(f_{1},f_{2})\|_{L^{q}(w)} \leq C([\vec{w}]_{A_{\vec{q},\vec{r}}}) \prod^{2}_{i=1}\| f_{i}\Vert_{L^{q_{i}}(w_{i})},
\end{equation*}
where $\mathcal{M}:=\mathcal{M}_{\operatorname{lac}}$ (respectively  $\mathcal{M}_{\operatorname{full}}$) and $\vec{w}=(w_1,w_2)\in A_{\vec{q},\vec{r}}$ with $ \vec{r}=(r_1,r_2,t)$ defined as in Definition \ref{lmodef}.
\end{theorem}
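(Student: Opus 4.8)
The plan is to deduce Theorem~\ref{lacweighted} from the sparse domination principle for $\mathcal M_{\operatorname{lac}}$ and $\mathcal M_{\operatorname{full}}$ stated in Theorem~\ref{mainthm1:lac}, combined with the known weighted bounds for bilinear sparse forms associated with the class $A_{\vec q,\vec r}$ of Definition~\ref{lmodef}. So the structure is: (i) show that the hypotheses on $(1/r_i,1/s_i)$ being in the interior of $L_n$ (resp.\ $F_n$) together with $1/r_1+1/r_2<1$ and $t=\frac{s_1s_2}{s_1+s_2-s_1s_2}>1$ are exactly the admissibility conditions needed to invoke Theorem~\ref{mainthm1:lac}, so that for all compactly supported bounded $f_1,f_2$ there is a sparse collection $\mathcal S$ with
\[
\big|\langle \mathcal M(f_1,f_2),g\rangle\big|\lesssim \sum_{Q\in\mathcal S}|Q|\,\langle f_1\rangle_{r_1,Q}\,\langle f_2\rangle_{r_2,Q}\,\langle g\rangle_{t,Q};
\]
(ii) invoke the standard fact that such a sparse form is bounded on the weighted tuple $L^{q_1}(w_1)\times L^{q_2}(w_2)\to L^q(w)$ precisely when $\vec w\in A_{\vec q,\vec r}$ with $\vec r=(r_1,r_2,t)$, $r_i\le q_i$, and $t'>q$ (equivalently $q<t'$); and (iii) unwind the duality $L^q(w)=(L^{q'}(w^{1-q'}))^*$ to recover the claimed operator norm bound with constant depending on $[\vec w]_{A_{\vec q,\vec r}}$.

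First I would recall precisely the weighted bound for bilinear sparse forms from the literature (e.g.\ \cite{LMO, Nie} and the references therein): for $\vec r=(r_1,r_2,t)$, $r:=\big(\tfrac1{r_1}+\tfrac1{r_2}+\tfrac1t\big)^{-1}$, the form
$\Lambda_{\mathcal S,\vec r}(f_1,f_2,g)=\sum_{Q\in\mathcal S}|Q|\prod_{i=1}^2\langle f_i\rangle_{r_i,Q}\langle g\rangle_{t,Q}$
satisfies $\big|\Lambda_{\mathcal S,\vec r}(f_1,f_2,w^{1/q}g)\big|\lesssim [\vec w]_{A_{\vec q,\vec r}}^{\theta}\,\|f_1\|_{L^{q_1}(w_1)}\|f_2\|_{L^{q_2}(w_2)}\|g\|_{L^{q'}}$ whenever $r_i\le q_i$ and $q<t'$, with $\theta$ an explicit exponent. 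Since this is cited as a known result, in the paper I would simply state it as a lemma with reference and then combine. The reduction to bounded compactly supported $f_i$ and a density/monotone convergence argument (the maximal function is a supremum of positive averages, so Fatou applies) extends the bound to all $f_i\in L^{q_i}(w_i)$.

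The second ingredient requires genuine verification: I must check that the triple $(1/r_1,1/r_2,1/t)$ produced by the geometric hypotheses (interior of $L_n$ or $F_n$, plus $1/r_1+1/r_2<1$, plus $t>1$) is one for which Theorem~\ref{mainthm1:lac} actually yields the stated sparse bound, i.e.\ the point lies in the sparse-domination region of that theorem. Concretely, for $\mathcal M_{\operatorname{lac}}$ the region in $(1/r_1,1/r_2,1/s_1,1/s_2)$ space is governed by $L_n$ in each pair of coordinates, and for $\mathcal M_{\operatorname{full}}$ by $F_n$, with the extra constraints $1/r_1+1/r_2<1$ (which guarantees $1/r=1/r_1+1/r_2-1/t'>0$ works out so the form makes sense and the operator is at least bilinear) and $t>1$ (so that $t'<\infty$ and $\langle g\rangle_{t,Q}$ is a genuine averaging exponent below $\infty$, ensuring the dual exponent appearing in the sparse form is nontrivial). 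I would confirm that $s_1,s_2$ relate to the dual side through $\frac1t=1-\frac1{s_1}-\frac1{s_2}+\cdots$—more precisely the definition $t=\frac{s_1s_2}{s_1+s_2-s_1s_2}$ rearranges to $\frac1t=\frac1{s_1}+\frac1{s_2}-1$, wait, to $1-\frac1t=\big(1-\frac1{s_1}\big)+\big(1-\frac1{s_2}\big)$, i.e.\ $\frac1{t'}=\frac1{s_1'}+\frac1{s_2'}$; this is the natural bilinear relation making the $(s_i)$ the ``output'' exponents of the two linear pieces and $t$ the combined output. I would verify that this relation, together with membership in $L_n$ (resp.\ $F_n$), is exactly what Theorem~\ref{mainthm1:lac} consumes.

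The main obstacle, and the step I expect to need the most care, is the \emph{bookkeeping of exponents}: matching the parametrization $(r_i,s_i)$ used in the statement of Theorem~\ref{lacweighted} with the parametrization used inside Theorem~\ref{mainthm1:lac} and inside the cited sparse-form weighted bounds, and checking that the admissibility conditions ($r_i\le q_i$, $q<t'$, $1/r_1+1/r_2<1$, $t>1$) are simultaneously compatible with a nonempty open region of $(1/r_i,1/s_i)$ inside $L_n$ (resp.\ $F_n$)—so that the theorem is not vacuous. Once the dictionary between the two sets of parameters is pinned down, the rest is a formal concatenation: sparse domination (Theorem~\ref{mainthm1:lac}) $\Rightarrow$ weighted sparse form bound (cited) $\Rightarrow$ weighted operator bound, with the constant $C([\vec w]_{A_{\vec q,\vec r}})$ inherited from the sparse-form estimate. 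I would also remark that the endpoint-type constraints ($t'>q$ strict, interiors of $L_n,F_n$) are precisely what is needed to stay strictly inside the region where the cited sparse bounds hold with a quantitative $A_{\vec q,\vec r}$ constant.
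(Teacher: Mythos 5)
Your proposal follows exactly the paper's route: Theorem \ref{lacweighted} is deduced by concatenating the sparse domination of Theorem \ref{mainthm1:lac} with the known weighted bounds for $(r_1,r_2,t)$-sparse forms, namely \cite[Corollary 2.15]{LMO} (stated as Theorem \ref{LMO} in the paper). The only caution is that your step (iii) invokes the duality $L^{q}(w)=(L^{q'}(w^{1-q'}))^{*}$, which is unavailable when $q\le 1$ (a case that genuinely occurs since $\frac{1}{q}=\frac{1}{q_1}+\frac{1}{q_2}$); the cited results of \cite{LMO,Nie} already cover that range, so one should rely on them directly rather than on duality.
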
 

The weighted estimates in Theorem \ref{lacweighted} are indeed consequence of a sparse domination principle for the bilinear spherical maximal functions shown in Theorem \ref{mainthm1:lac} below. Actually, one could state an improved result, providing the quantitative bounds, including end-points, and vector-valued inequalities, see Theorem \ref{lacweightedNie} and Remark \ref{rem:vv}. For these consequences we appeal to \cite{LMO1, LMO,Nie}. 

Before stating the sparse domination result let us set up the notation. A collection of cubes $\mathcal{S}$ in $\R^n $ is said to be $ \eta$-sparse, $0<\eta<1,$ if there are sets $\{E_S \subset S:S\in \mathcal{S}\}$ which are pairwise disjoint and satisfy $|E_S|>\eta|S|$ for all $S\in \mathcal{S}$. 
By the term  $(p,q,r)$-sparse form we mean the following: 
\[\Lambda_{\mathcal{S}_{p,q,r}}(f,g,h):=\sum_{S\in\mathcal{S}}|S|\langle f\rangle_{S,p}\langle g\rangle_{S,q}\langle h\rangle_{S,r},
\]
see Section \ref{notdef} for notations.

\begin{theorem}
\label{mainthm1:lac}
Let $n\geq 2$.  For $i=1,2$, let $(\frac{1}{r_{i}},\frac{1}{s_{i}})$ be in the interior of $L_n$ (respectively $F_n$). Suppose $\rho_i>r_i$, are such that $\frac{1}{\rho_{1}}+\frac{1}{\rho_{2}}<1$. Then for any non-negative compactly supported bounded functions $f_{1},f_{2}$ and $h$, there exists a sparse collection $\mathcal{S}=\mathcal{S}_{\rho_{1},\rho_{2},t}$ such that 
\begin{equation*}
\langle \mathcal{M}(f_{1},f_{2}),h\rangle\leq C \Lambda_{\mathcal{S}_{\rho_{1},\rho_{2},t}}(f_{1},f_{2},h),
\end{equation*}
where $t=\frac{s_{1}s_{2}}{s_{1}+s_{2}-s_{1}s_{2}}>1$ and $\mathcal{M}:=\mathcal{M}_{\operatorname{lac}}$ (respectively  $\mathcal{M}_{\operatorname{full}}$).
\end{theorem}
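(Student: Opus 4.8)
The plan is to follow the by-now standard stopping-time recursion of Lacey, adapted to the bilinear product structure, reducing everything to a single-scale (or single-annulus) estimate on dyadic cubes. First I would fix a large dyadic cube $Q_0$ containing the supports of $f_1,f_2,h$ and show that it suffices to produce, inside $Q_0$, a good estimate of the "principal part" together with a recursion on a sparse family of strictly smaller cubes. Concretely, for the lacunary operator one decomposes $\mathcal M_{\operatorname{lac}}(f_1,f_2)=\sup_{j}\mathcal A_{2^j}f_1\,\mathcal A_{2^j}f_2$ according to the scale $2^j$ relative to $\ell(Q)$: the scales $2^j\gtrsim\ell(Q)$ are handled by the pointwise product bound $\mathcal M_{\operatorname{lac}}(f_1,f_2)\le M_{\operatorname{lac}}f_1\,M_{\operatorname{lac}}f_2$ and the $L^{r_i}$-improving/boundedness of $M_{\operatorname{lac}}$, while the scales $2^j\lesssim\ell(Q)$ localize: the averages only see $f_i$ restricted to a fixed dilate $CQ$. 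For $\mathcal M_{\operatorname{full}}$ one runs the same argument but first splits the full supremum into a lacunary piece and the pieces $\sup_{2^j\le t<2^{j+1}}$, each of which enjoys the same $L^{r_i}\times\cdots$ improving estimates on the corresponding annular average (this is exactly where the hypothesis $(\tfrac1{r_i},\tfrac1{s_i})\in F_n$, versus $L_n$, enters, since $F_n$ is the region of $L^r$-improving for the single-scale full spherical average and $L_n$ is the corresponding region for a single lacunary annulus).

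The heart is then the single-scale trilinear estimate: on a dyadic cube $Q$,
\[
\Big\langle \big(\sup_{2^j\sim\ell(Q)}\mathcal A_{2^j}f_1\,\mathcal A_{2^j}f_2\big)\mathbf 1_Q,\;h\mathbf 1_Q\Big\rangle
\;\lesssim\; |Q|\,\langle f_1\rangle_{CQ,\rho_1}\langle f_2\rangle_{CQ,\rho_2}\langle h\rangle_{CQ,t}.
\]
I would obtain this by Hölder in the trilinear pairing together with the $L^{s_1}\times L^{s_2}\to L^{t'}$-type bound (equivalently the $L^{r_i}$-improving property) for the single-annulus bilinear product average; one converts the $(s_1,s_2,t)$ exponents into the strictly larger $(\rho_1,\rho_2,t)$ exponents by interpolating the improving estimate with the trivial $L^1$ bound, which is legitimate precisely because $(\tfrac1{r_i},\tfrac1{s_i})$ lies in the \emph{interior} of $L_n$ (resp. $F_n$) and $\rho_i>r_i$ with $\tfrac1{\rho_1}+\tfrac1{\rho_2}<1$; the exponent $t=\frac{s_1s_2}{s_1+s_2-s_1s_2}$ is just the one forced by $\tfrac1{s_1}+\tfrac1{s_2}=1+\tfrac1{t'}$, i.e. $\tfrac1{t'}=\tfrac1{s_1}+\tfrac1{s_2}-1$, so that $\langle f_1\rangle_{s_1}\langle f_2\rangle_{s_2}$ pairs against $\langle h\rangle_{t}$. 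A Whitney-type / maximal-truncation argument (as in Lacey) shows the localized operator is genuinely controlled by finitely many scales comparable to $\ell(Q)$, so no loss occurs.

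With the single-scale estimate in hand, the recursion is routine: let $\mathcal M^Q$ denote the piece of $\mathcal M$ at scales $\lesssim\ell(Q)$; choose the stopping cubes to be the maximal dyadic $Q'\subsetneq Q$ for which some average $\langle f_i\rangle_{Q',\rho_i}$ or $\langle h\rangle_{Q',t}$ exceeds a large constant times its average over $Q$. Standard maximal-function estimates give $\sum_{Q'}|Q'|\le \tfrac12|Q|$, so the collection built by iterating is sparse; on the complement $E_Q=Q\setminus\bigcup Q'$ the averages are under control and the single-scale bound applies, contributing the term $|Q|\langle f_1\rangle_{Q,\rho_1}\langle f_2\rangle_{Q,\rho_2}\langle h\rangle_{Q,t}$; summing the geometric recursion over the resulting tree yields $\langle\mathcal M(f_1,f_2),h\rangle\lesssim\Lambda_{\mathcal S_{\rho_1,\rho_2,t}}(f_1,f_2,h)$. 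Finally one removes the restriction to a single $Q_0$ by exhausting $\R^n$ with a disjoint collection of such cubes (possible since $f_i,h$ are compactly supported) and taking the union of the sparse families, absorbing the finitely many dilation constants into $\eta$. The main obstacle is the single-scale $L^r$-improving estimate for the bilinear product of spherical (and annular) averages and its sharp range: one must carefully track that the regions $L_n$ and $F_n$ are exactly the right ones — this is where the known sharp $L^p$-improving bounds for the linear spherical average (Schlag, Schlag–Sogge, Lacey) are invoked for each factor and then combined via Hölder, and where the interior-versus-boundary distinction becomes essential for the interpolation step that upgrades $(s_1,s_2)$ to $(\rho_1,\rho_2)$.
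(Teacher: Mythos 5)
Your overall architecture (localize to a cube $Q_0$, run a stopping-time recursion on the averages, prove a ``principal'' estimate on the non-stopped part, iterate to build the sparse family) matches the paper's, but there is a genuine gap at the step you call the heart of the argument, and it is precisely the step the paper spends most of its effort on. After the stopping-time selection, the non-stopped collection $\mathcal D_0$ contains dyadic cubes of \emph{every} scale below $\ell(Q_0)$, and the quantity to be bounded is $\langle \sup_{Q\in\mathcal D_0}\mathcal A_Qf_1\,\mathcal A_Qf_2,\,h\rangle$. Your plan is to apply a single-scale trilinear H\"older/$L^p$-improving bound on each cube and assert that ``a Whitney-type / maximal-truncation argument shows the localized operator is genuinely controlled by finitely many scales comparable to $\ell(Q)$, so no loss occurs.'' That assertion is false for the lacunary (and full) spherical maximal function: after linearization the disjointness of the sets $B_Q$ controls only the $h$-factor, and summing the single-scale improving estimate over all scales in $\mathcal D_0$ diverges. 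The mechanism that actually produces summability --- and the reason Lacey's scheme works at all --- is a Calder\'on--Zygmund decomposition $f_i=g_i+b_i$ at height $\langle f_i\rangle_{Q_0,r_i}$ combined with the \emph{continuity} (translation-H\"older) estimates for the spherical averages (Lemma~2.3 and Theorems~2.1, 3.2 of \cite{Lacey}): pairing the bad part supported on stopping cubes of side $2^{q-k}$ against an average at scale $2^{q}$ gains a factor $2^{-\eta k}$, which is what makes the sum over the scale gap $k$ (and hence over all of $\mathcal D_0$) converge. This CZ-plus-continuity input, carried out for the four terms $GG$, $GB$, $BG$, $BB$ in the paper's Lemma~\ref{keylac}, is entirely absent from your proposal, and without it the argument does not close.

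A second, smaller but still substantive, omission: the paper first proves the sparse bound with exponents $(r_1,r_2,t)$ for \emph{characteristic} functions (Theorem~\ref{mainthm2:lac}) and then passes to general bounded $f_1,f_2$ by decomposing them over level sets $\{2^m\le f_i\le 2^{m+1}\}$ and resumming the resulting sparse forms via a Carleson-embedding/H\"older argument. It is exactly this resummation that forces the loss from $r_i$ to $\rho_i>r_i$ and requires $\tfrac1{\rho_1}+\tfrac1{\rho_2}<1$ (respectively $\le1$ at an intermediate stage). Your proposal attributes the appearance of $\rho_i$ to ``interpolating the improving estimate with the trivial $L^1$ bound,'' which does not explain the role of the condition $\tfrac1{\rho_1}+\tfrac1{\rho_2}<1$ and does not substitute for the level-set decomposition. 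You would need to supply both the continuity-based scale summation and the characteristic-function reduction to make the proof complete.
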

We prove this theorem in two steps. First, we shall establish an analogous result, in fact a slightly stronger version of the above theorem, for characteristic functions. Then we shall obtain the theorem for general functions. The proof of these results and of Theorem~\ref{mainthm1:lac} will be given in Section \ref{sec:proof}.

Note that Theorem~\ref{lacweighted} does not provide weighted boundedness of the operators $\mathcal M_{\operatorname{lac}}$ and $\mathcal M_{\operatorname{full}}$ for the tuple $(2,2,1)$. We shall establish some non-trivial weighted estimates for such a tuple. We exploit ideas from  \cite{JSK,XQ} to obtain Theorem \ref{radial}, based on interpolation of analytic families of linear operators in \cite{Calderon}.  
\begin{theorem}
\label{radial}
Let $\mathcal{M}$ be either $\mathcal{M}_{\operatorname{lac}}$ or $\mathcal{M}_{\operatorname{full}}$. Then $\mathcal{M}$ is bounded from $L^{2}(|x|^{\alpha})\times L^{2}(|x|^{\beta})$ to $L^{1}(|x|^{\frac{\alpha+\beta}{2}})$ for $\alpha,\beta$ satisfying:
\begin{itemize}
\item If  $\mathcal{M}=\mathcal{M}_{\operatorname{lac}}$,
	\begin{equation*}
		2(1-n)<\alpha,\beta<n-1\quad \text{and}\quad \alpha+\beta>2(1-n), \quad n\ge2.
	\end{equation*}
\item  If  $\mathcal{M}=\mathcal{M}_{\operatorname{full}}$,
	\begin{equation*}
	2(1-n)<\alpha,\beta<n-2\quad \text{and}\quad \alpha+\beta>2(1-n),\quad n\geq3.
	\end{equation*}
\end{itemize}
\end{theorem}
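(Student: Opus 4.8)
The plan is to reduce the bilinear $(2,2,1)$ estimate to a family of linear weighted estimates via the interpolation theorem for analytic families of operators of Stein--Calder\'on, following the strategy of \cite{JSK,XQ}. The starting point is the observation that on the diagonal $t$-scale, $\mathcal{A}_t f_1 \cdot \mathcal{A}_t f_2$ couples two spherical averages at the \emph{same} radius, so after freezing one function we are looking at a \emph{linear} maximal operator acting on the other, but with a symbol that depends on the frozen function. More precisely, I would first handle the \emph{single-scale} operators $\mathcal{A}_t f_1 \cdot \mathcal{A}_t f_2$ and then take the supremum, using a standard square-function/Sobolev-embedding trick (as in Stein's original argument, or the $g$-function decomposition in \cite{Lacey}) to pass from the fixed-$t$ bound to the maximal bound at the cost of an $\varepsilon$-loss of derivatives, which is absorbed by staying in the \emph{open} region for $(\alpha,\beta)$. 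For the lacunary operator the passage is even cheaper: the supremum over $j\in\Z$ is controlled by an $\ell^2$ sum of $L^2$-pieces after a Littlewood--Paley decomposition of each $\mathcal{A}_{2^j}$, since the high-frequency part of the spherical measure decays.

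The core step is the construction of the analytic family. I would interpolate in a complex parameter $z$ attached to the order of a fractional spherical average: replace $\widehat{d\sigma_{n-1}}(\xi)$ by a family $m_z(\xi)$ behaving like $(1+|\xi|)^{-(n-1)/2 - z}$ (times an oscillatory factor $e^{i|\xi|}$), analytic in $z$, so that $z=0$ recovers the spherical average and $\operatorname{Re} z$ large gives an operator bounded on $L^2$ with a very good power weight by Plancherel. On the line $\operatorname{Re} z = a_0 < 0$ I would prove an $L^2(|x|^{\alpha_0})\times L^2(|x|^{\beta_0})\to L^1(|x|^{(\alpha_0+\beta_0)/2})$ bound at the two \emph{extreme} admissible weight exponents using known linear weighted estimates for the (fractional) spherical maximal function with power weights — these are exactly the estimates quantified by the Lebesgue-point/local-smoothing input that dictates the numbers $n-1$ (lacunary) and $n-2$ (full) in the statement. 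Concretely, the $L^1$ target forces us to split via Cauchy--Schwarz: $\|\mathcal{A}_t f_1\,\mathcal{A}_t f_2\|_{L^1(|x|^{(\alpha+\beta)/2})} \le \|\mathcal{A}_t f_1\|_{L^2(|x|^{\alpha})}\|\mathcal{A}_t f_2\|_{L^2(|x|^{\beta})}$, and then each factor is a \emph{linear} weighted maximal bound; the admissible ranges $2(1-n)<\alpha,\beta$ and $\alpha<n-1$ (resp. $n-2$) are precisely the $A_2$-type ranges for $M_{\operatorname{lac}}$ (resp. $M_{\operatorname{full}}$) with radial power weights, and the extra coupling condition $\alpha+\beta>2(1-n)$ is what survives from balancing the two endpoints in the interpolation rather than multiplying the individual ranges.

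Having two endpoint estimates — one with a large good $\operatorname{Re} z$ and cheap weights, one with $\operatorname{Re} z = 0$ (or slightly negative) and the sharp weights at the boundary of the region — I would apply Stein--Calder\'on interpolation for the analytic family $\{T_z\}$ to land at $z=0$ with an interior pair $(\alpha,\beta)$, which is the claimed bound. One technical point: the analytic-interpolation machinery is for linear operators, so I would linearize the maximal function in the usual way by introducing a measurable choice $t = t(x)$ (and, for the bilinear product, keep track of the fact that the same $t(x)$ appears in both factors — this is harmless because we already Cauchy--Schwarzed into two linear pieces sharing the exponent function), prove the estimates uniformly in the linearization, and pass to the supremum at the end.

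The main obstacle I expect is not the interpolation scheme itself but pinning down the \emph{correct endpoint weighted linear estimates} for the fractional spherical maximal operators with power weights $|x|^{\gamma}$, including the sharp dependence of the admissible range of $\gamma$ on $\operatorname{Re} z$, and verifying that the resulting region after interpolation is exactly the open polygon described (in particular that the full-version loss of one extra derivative, reflected in the exponent $n-2$ versus $n-1$ and the restriction $n\ge 3$, comes out correctly). This is where I would lean most heavily on the computations in \cite{JSK,XQ} and on the known radial-weight theory for the spherical maximal function; the rest is assembling the analytic family, checking admissible growth in $\operatorname{Im} z$, and the routine square-function passage from fixed scale to maximal operator.
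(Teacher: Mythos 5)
Your overall framework (linearize via a measurable $t=\tau(x)$, run Stein--Calder\'on interpolation for an analytic family, lean on \cite{JSK,XQ}) is in the right spirit, but the central reduction you propose destroys exactly the bilinear structure the theorem is about, and as a result your argument cannot reach the claimed region. The Cauchy--Schwarz step
$\|\mathcal{A}_t f_1\,\mathcal{A}_t f_2\|_{L^1(|x|^{(\alpha+\beta)/2})}\le \|\mathcal{A}_t f_1\|_{L^2(|x|^{\alpha})}\,\|\mathcal{A}_t f_2\|_{L^2(|x|^{\beta})}$
decouples the two factors, so each weight must individually be admissible for the \emph{linear} maximal operator on $L^2$. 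By Duoandikoetxea--Vega the sharp radial range for $M_{\operatorname{lac}}$ on $L^2$ is $1-n\le \gamma<n-1$, not $2(1-n)<\gamma<n-1$ as you assert; the linear estimate genuinely fails for $\gamma<1-n$, and for $n\ge 3$ the weight $|x|^{\gamma}$ with $\gamma$ near $2(1-n)\le -n$ is not even locally integrable, hence lies in no $A_p$ class. So after Cauchy--Schwarz you can only recover the product-type range $1-n\le\alpha,\beta<n-1$, which is already Theorem \ref{product} via H\"older. The whole content of Theorem \ref{radial} is the sub-product part of the region, where one exponent drops below $1-n$ and only the coupling $\alpha+\beta>2(1-n)$ saves the day; no decoupled argument, and no amount of interpolation in the order of a fractional spherical average (which trades smoothness for weight in each \emph{linear} factor separately), can produce that coupling.

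The paper's proof keeps the operator fixed and instead places the analytic family in the \emph{weights and Lebesgue exponents} (interpolation with change of measure, in the sense of \cite{Calderon}): it interpolates between two already-established \emph{bilinear} estimates, namely (i) the consequence of the sparse domination, Theorem \ref{lacweighted}, at exponents $(2+2\epsilon,2+2\epsilon)\to p$ slightly above $1$ with genuinely bilinear $A_{\vec p,\vec r}$ weights (these allow $|x|^{\alpha'}$, $|x|^{\beta'}$ individually as singular as $2(1-n)$ provided $\alpha'+\beta'>2(1-n)$, as one checks via Lemma \ref{weightrelation}), and (ii) the product-weight estimate at $(2-\delta,2-\delta)$. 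Landing at $(2,2)\to 1$ and letting $\epsilon\to0$ yields the stated open region. If you want to salvage your write-up, replace the Cauchy--Schwarz decoupling and the fractional-average family by this two-endpoint bilinear scheme; the fractional-average machinery you describe is the right tool for the \emph{inputs} (the linear and sparse bounds of \cite{Lacey}), not for the bilinear interpolation itself.
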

We would like to remark that while proving Theorem~\ref{radial}, we actually get weighted boundedness of operators $\mathcal{M}_{\operatorname{lac}}$ (and $\mathcal{M}_{\operatorname{full}}$) for the triplet $(2,2,1)$ for more general weights than stated in the theorem above. Moreover,  these weights do not come from the product type bilinear weights. Let $\frac{1}{\phi_{\operatorname{lac}}(\frac{1}{r})}$ denote the piecewise linear function on the interval $(0,1)$ whose graph connects the points $(0,1), (\frac{n}{n+1},\frac{n}{n+1})$ and $(1,0)$, i.e., 
\begin{equation}
\label{philac}
\frac{1}{\phi_{\operatorname{lac}}(\frac{1}{r})}= 
     \begin{cases}
       1-\frac{1}{rn}, &\quad\text{if}\quad 0<\frac{1}{r}\leq \frac{n}{n+1}\\
       n(1-\frac{1}{r}), &\quad \text{if}\quad \frac{n}{n+1}<\frac{1}{r}<1.
     \end{cases}
\end{equation}
Similarly, let $\frac{1}{\phi_{\operatorname{full}}(\frac{1}{r})}$ denote the piecewise linear function on $(0,\frac{n-1}{n})$ whose graph connects the points $(0,1)$, $(\frac{n^{2}-n}{n^{2}+1},\frac{n^{2}-n+2}{n^{2}+1})$ and $(\frac{n-1}{n}, \frac{n-1}{n})$. 
An inspection of the proof of Theorem~\ref{radial} delivers the following (see Section \ref{notdef} for the definitions of weights).  
\begin{proposition}\label{remark} 
We have the following:
\begin{itemize}
\item The operator $\mathcal{M}_{\operatorname{lac}}$ is bounded from $L^{2}(w_{1})\times L^{2}(w_{2})$ to $L^{1}(w)$ for certain weights  $\vec{w}=(w_{1},w_{2})$ which do not belong to product type weights 
 \begin{equation*}
 \label{forbilac}
	\bigcup\limits_{1<r_1,r_2<2}\Big( \prod_{i=1}^2 A_{\frac{2}{r_{i}}}\cap \operatorname{RH}_{\big(\frac{\phi'_{\operatorname{lac}}(\frac{1}{r_{i}})}{2}\big)'}\Big)\bigcup
	 	\left(\mathcal{R}_{2}\times\mathcal{R}_{2}\right),
\end{equation*}
where 
\begin{equation}
\label{Rp}
\mathcal{R}_{p}=\{|x|^{b}:1-n\leq b<(n-1)(p-1)\},\quad  n\ge 2.
\end{equation}
\item The operator $\mathcal{M}_{\operatorname{full}}$ is bounded from $L^{2}(w_{1})\times L^{2}(w_{2})$ to $L^{1}(w)$ for certain weights  $\vec{w}=(w_{1},w_{2})$ which do not belong to product type weights
 \begin{equation*}
 \label{forbifull}
	\bigcup\limits_{\frac{n}{n-1}<r_1,r_2<2}\Big( \prod_{i=1}^2 A_{\frac{2}{r_{i}}}\cap \operatorname{RH}_{\big(\frac{\phi'_{\operatorname{full}}(\frac{1}{r_{i}})}{2}\big)'}\Big)\bigcup
	 	\left(\mathcal{\widetilde{R}}_{2}\times\mathcal{\widetilde{R}}_{2}\right),
\end{equation*}
		where 
\begin{equation}
\label{Rpt}
\mathcal{\widetilde{R}}_{p}=\{|x|^{b}:1-n< b<(n-1)(p-1)-1\},\quad n\ge 3.
\end{equation}
\end{itemize}
\end{proposition}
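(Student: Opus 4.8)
The proposition is obtained by inspecting the proof of Theorem~\ref{radial}, so the plan is first to recall how that proof runs and then to isolate which weights it genuinely uses. The proof linearizes $\mathcal M$: one replaces the supremum in $\mathcal{M}_{\operatorname{lac}}$ (resp.\ $\mathcal{M}_{\operatorname{full}}$) by a measurable selection $t=t(x)$ of dyadic (resp.\ arbitrary) radii, so that $\mathcal M(f_1,f_2)(x)=\mathcal A_{t(x)}f_1(x)\,\mathcal A_{t(x)}f_2(x)$, and it suffices to bound this bilinear expression uniformly in $t(\cdot)$. One then embeds it into an analytic family $\{\mathcal T_z\}$ on a strip in which (i) the surface measure is replaced by the Bochner--Riesz type analytic continuation $d\mu_z=\Gamma(z)^{-1}(1-|y|^2)_+^{z-1}\,dy$, so that $\mathcal A_t^z$ reduces to $\mathcal A_t$ at $z=0$ and has Fourier multiplier decaying like $|\xi|^{-(n-1)/2-\operatorname{Re}z}$, and (ii) the weights enter as powers $|x|^{c_iz}$ on the two input slots and $|x|^{c_0z}$ on the output, arranged so that the \emph{sum} of the input exponents stays fixed along the interpolation (equal to $\alpha+\beta$) while the smoothing and the difference of exponents are traded between the two factors. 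Applying Stein's interpolation theorem for analytic families \cite{Calderon} between the two bounding lines of the strip --- as in \cite{JSK,XQ} --- produces Theorem~\ref{radial}; the gain over the product-weight estimates of Theorem~\ref{product} is exactly this trade-off, which is why only $\alpha+\beta$, rather than $\alpha$ and $\beta$ separately, is constrained from below.

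Now the observation. On one bounding line, $\operatorname{Re}z=\theta_0$, the estimate needed is a \emph{weighted} $L^2\to L^2$ bound for the smoothed linearized averages $\mathcal A_t^{\theta_0}$; on the other, $\operatorname{Re}z=\theta_1$ with $\theta_1$ large, the kernel of $\mathcal A_t^{\theta_1}$ is bounded so that $\sup_t|\mathcal A_t^{\theta_1}f|\lesssim Mf$ pointwise and an $L^2\times L^2\to L^1$ bound follows from Cauchy--Schwarz and the weighted boundedness of $M$. The point is that neither endpoint estimate is special to power weights: the weighted $L^2$ theory of the (analytically continued) linear spherical maximal functions --- through the sparse bounds of \cite{Lacey} and the associated $A_p$--$\operatorname{RH}$ classes, or directly through weighted $L^2$ inequalities of the type in \cite{DV} --- applies to every weight in $A_{2/r}\cap\operatorname{RH}_{(\phi'_{\operatorname{lac}}(1/r)/2)'}$ (resp.\ with $\phi_{\operatorname{full}}$), and $M$ is bounded on $L^2(v)$ for every $v\in A_2$. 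Running Stein interpolation with a change of measure therefore yields
\begin{equation*}
\|\mathcal M(f_1,f_2)\|_{L^1(w)}\lesssim \|f_1\|_{L^2(w_1)}\,\|f_2\|_{L^2(w_2)},\qquad w=w_1^{1/2}w_2^{1/2},
\end{equation*}
for $\vec w=(w_1,w_2)$ ranging over a two-parameter interpolated family --- $w_i=v_i^{1-\theta}u_i^{\theta}$ with $v_i$ in the $A\cap\operatorname{RH}$ class above, $u_i\in A_2$, $\theta$ the interpolation parameter --- which strictly contains the product-type union displayed in the statement.

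It remains to exhibit a concrete $\vec w$ in this family that escapes the union. For both operators when $n\ge3$ a power weight already works: since $2(1-n)<-n$, choose $\alpha\in(2(1-n),-n)$ and take $\vec w=(|x|^{\alpha},1)$ (output weight $|x|^{\alpha/2}$). Theorem~\ref{radial} gives the boundedness, because $2(1-n)<\alpha<n-1$ (resp.\ $<n-2$) and $\alpha+0>2(1-n)$; on the other hand $|x|^{\alpha}\notin A_p$ for every $p\ge1$, hence $w_1$ lies in no $A_{2/r_1}$, and $|x|^{\alpha}\notin\mathcal R_2$ (resp.\ $\widetilde{\mathcal R}_2$) since $\alpha<1-n$, so $\vec w$ is in neither piece of the product-type union. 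In the remaining case, $\mathcal M_{\operatorname{lac}}$ with $n=2$, every admissible power weight already sits in $A_{2/r}\cap\operatorname{RH}$ for $r$ close to $1$, so one must use a non-power member of the interpolated family: take $w_1$ of the form $v_1^{1-\theta}$ with $v_1\in A_2\setminus\bigcup_{q<2}A_q$ (equivalently, $v_1$ failing $\operatorname{RH}_{(\phi'_{\operatorname{lac}}(1/r_1)/2)'}$ for every admissible $r_1\in(1,2)$), balanced by a suitable $w_2$ so that the output weight and the constraint are respected; the verification that such $\vec w$ lies in the interpolated class but in neither piece of the union is a routine computation with $A_p$ and reverse H\"older characteristics.

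The main obstacle is the middle step: making precise how the class of admissible weights on the line $\operatorname{Re}z=\theta_0$ depends on the analytic smoothing parameter, i.e.\ establishing the weighted $L^2$ bounds for the whole analytic family $\mathcal A_t^z$ with the sharp $A_{2/r}\cap\operatorname{RH}$ range rather than only for $|x|^\gamma$, since this is precisely what upgrades the power-weight Theorem~\ref{radial} to the larger class asserted here; pinning down the $n=2$ non-power witness is a secondary but also somewhat delicate point, whereas the analytic interpolation itself and the Cauchy--Schwarz splitting are standard.
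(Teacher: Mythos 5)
Your reconstruction of the proof of Theorem~\ref{radial} is not the one the paper uses, and the difference is exactly where the content of this proposition lies. There is no Bochner--Riesz analytic continuation of the surface measure anywhere in the argument: the linearised operator $\mathcal A_{\tau(x)}$ is held fixed, and the analytic family lives entirely in the complex powers placed on the weights and on $|f_j|$ (see the function $\psi$ in \eqref{psi}). The two boundary estimates are not ``weighted $L^2$ for smoothed averages'' versus ``pointwise domination by $M$''; they are the two \emph{bilinear} weighted estimates \eqref{eqs} (from the sparse domination, Theorem~\ref{lacweighted}, at exponents $p_i>2$ with $\vec w\in A_{\vec p,\vec r}$) and \eqref{eqp} (from Theorem~\ref{product}, at exponents $q_i<2$ with product weights). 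The interpolated weight at the triplet $(2,2,1)$ is $v_i^{2\theta/q_i}w_i^{2(1-\theta)/p_i}$, and the non-product-type examples come from the factor $w_i$, which ranges over the genuinely bilinear class $A_{\vec p,\vec r}$; Section~\ref{compar} (compare \eqref{eq50} with \eqref{eq51}) is precisely the verification that this class exceeds the product-type ones. Your proposed family $v_i^{1-\theta}u_i^{\theta}$ with $v_i\in A\cap\operatorname{RH}$ and $u_i\in A_2$ interpolates between two \emph{product-type} (linear) classes, so it cannot ``strictly contain the product-type union''; and the input it would require --- weighted $L^2$ bounds for a whole analytic family of smoothed spherical averages over the full $A_{2/r}\cap\operatorname{RH}$ range --- is not established in the paper or in \cite{Lacey}, as you yourself flag. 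In the paper's scheme no such input is needed.

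On the witnesses: your pair $(|x|^{\alpha},1)$ with $\alpha\in(2(1-n),-n)$ does work for $n\ge3$ and follows from Theorem~\ref{radial} alone, since $|x|^{\alpha}$ with $\alpha<-n$ lies in no $A_p$ and not in $\mathcal R_2$ (resp.\ $\widetilde{\mathcal R}_2$). But the first bullet for $n=2$ is left unproven: you assert that all admissible power weights lie in the product classes and then defer to an unconstructed non-power member of the (incorrectly built) interpolated family. In fact your premise is false and a power weight still works: for $n=2$ take $(w_1,w_2)=(|x|^{-3/2},1)$. Theorem~\ref{radial} applies since $-2<-3/2<1$ and $-3/2+0>-2$; $|x|^{-3/2}\notin\mathcal R_2=\{|x|^{b}:-1\le b<1\}$; and for every $r\in(1,2)$ with $\phi'_{\operatorname{lac}}(\frac1r)>2$ one checks $\phi'_{\operatorname{lac}}(\frac1r)<4$, hence $\big(\phi'_{\operatorname{lac}}(\frac1r)/2\big)'>2$, while $|x|^{-3/2}\in\operatorname{RH}_{s}$ forces $s<4/3$; so the reverse H\"older condition fails for every admissible $r$. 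As written, then, your proof is based on a misidentified mechanism and is incomplete in the $n=2$ lacunary case.
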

\begin{remark}
The restriction $n\ge 3$ in Theorem \ref{radial} and in Proposition~\ref{remark} for the case of $\mathcal{M}_{\operatorname{full}}$ arises because the operator $\mathcal{M}_{\operatorname{full}}$ is not bounded from $L^2(\R^2)\times L^2(\R^2)$ into  $L^1(\R^2)$. Indeed, the underlying reason is that in dimension $n=2$,  the linear operator $M_{\operatorname{full}}$ is $L^{p}(\R^n)$ bounded only for $p>2$.
 \end{remark}
\section{Notations and definitions}
\label{notdef}
In this section we collect some of the notations and definitions that we use in this paper. With the letters $c, C\ldots $ we denote structural constants that depend only on the dimension and on parameters. Their values might vary from one occurrence to another, and in most of the cases we will not track the explicit dependence. We will write $\gamma_1\lesssim \gamma_2$ if $\gamma_1\le c \gamma_2$ for a structural constant~$c$. Given $p\ge1$, the conjugate exponent of $p$ will be denoted by $p'$, i.e., $1/p+1/p'=1$.

For any cube $Q$ and $1<p<\infty$, we define
 \[ \langle f\rangle_{Q,p}:=\bigg(\frac{1}{|Q|}\int_Q|f(x)|^pdx\bigg)^{1/p}, \qquad  \langle f\rangle_{Q}:=\frac{1}{|Q|}\int_Q|f(x)|dx,
 \]
where $|Q|$ denotes the Lebesgue measure of $Q$.

A weight is a non-negative locally Lebesgue integrable function that is non-zero in a set of positive measure. We say that a weight $w$ belongs to the Muckenhoupt class $A_p$ if
\begin{equation*}
[w]_{A_p}:= \sup_{Q} \Big(\frac{1}{|Q|}\int_{Q}w\, dx \Big) \Big(\frac{1}{|Q|}\int_{Q} w^{1-p'} \,dx\Big)^{p-1} < \infty, \quad 1<p<\infty.
\end{equation*} 
 The quantity $[w]_{A_p}$ is referred to as the $A_p$ characteristic of $w\in A_p$. 
For $p=1$ the class $A_1$ consists of all $w$ such that 
$$
[w]_{A_1}:=\essup\frac{M (w)}{w}<\infty.
$$

Given $s>1$, a weight belongs to the reverse H\"older $\operatorname{RH}_s$ if there exists a constant $C$ such that, for every cube $Q$ in $\R^n$ with sides parallel to the coordinate axes,
$$
\Big(\frac{1}{|Q|}\int_{Q}w^s\, dx \Big)^{1/s}\le \frac{C}{|Q|}\int_{Q} w \,dx < \infty.
$$

\subsection{Bilinear weights}
\label{subsec:weights}
Let $1\leqslant p_{1}, p_{2}< \infty$ and $p$ be such that 
\begin{equation}
\label{holder}
\frac{1}{p}=\frac{1}{p_{1}}+\frac{1}{p_{2}}.
\end{equation}

\begin{definition}\cite[Definition 3.5]{Lerner}
Let $\vec{p}=(p_{1},p_{2})$. For a given pair of weights $\vec{w}=(w_{1},w_{2})$, set $
w:=\prod_{i=1}^{2} w_{i}^{p/p_{i}}$.
We say that $\vec{w}\in A_{\vec{P}}$ if 
\begin{equation*}
[\vec{w}]_{A_{\vec{P}}}:= \sup_{Q} \Big(\frac{1}{|Q|}\int_{Q}w\, dx \Big) \prod_{j=1}^{2}\Big(\frac{1}{|Q|}\int_{Q} w_{j}^{1-p'_{j}} \,dx\Big) ^{p/{p_{j}^\prime}}  <\infty.
\end{equation*} 
When $p_j=1$, $\big(\frac{1}{|Q|}\int_Qw_j^{1-p'_j}\big)^{1/p'_j}$ is understood as $(\inf_Q w_j)^{-1}$. The quantity $[\vec{w}]_{A_{\vec{P}}}$ is referred to as the bilinear $A_{\vec{P}}$ characteristic of the bilinear weight 
$\vec{w}$. 
\end{definition}
The bilinear $A_{\vec{P}}$ class was further generalised recently in \cite{LMO}. 
\begin{definition}\cite[Section 1]{LMO}
\label{lmodef}
Let $\vec{p}=(p_{1},p_{2})$ and $p$ be as in \eqref{holder}. For a tuple $\vec{r}=(r_{1},r_{2},r_{3})$ with $r_{i}\leq p_{i}$, $i=1,2$, and $r'_{3}>p$, where $1\leq r_{1},r_{2},r_{3}<\infty$, we say that $\vec{w}=~(w_{1},w_{2})\in~A_{\vec{p},\vec{r}}$ if $0<w_{i}<\infty$ a.e. for $i=1,2$ and 
\begin{equation*}
	[\vec{w}]_{A_{\vec{p},\vec{r}}}:=\sup_{Q\subset\mathbb{R}^{n}}\langle w^{\frac{r'_{3}}{r'_{3}-p}} \rangle^{\frac{1}{p}-\frac{1}{r'_{3}}}_{Q}\prod^{2}_{i=1}\langle w^{\frac{r_{i}}{r_{i}-p_{i}}}_{i} \rangle^{\frac{1}{r_{i}}-\frac{1}{p_{i}}}_{Q}<\infty,
	\end{equation*}
where $w:=\prod^{2}_{i=1}w^{p/p_{i}}_{i}$. When $r_{3}=1$, the term corresponding to $w$
	needs to be replaced by $\langle w\rangle^{1/p}_{Q}$. Analogously, when $p_{i}=r_{i}$, the term corresponding to $w_{i}$ needs to be replaced by $\essup_{Q}w^{-1/p_{i}}_{i}$. 
	\end{definition}

\begin{remark}
Note that $A_{\vec{p},(1,1,1)}$ agrees with the class $A_{\vec{P}}$. 
\end{remark}

The following result from~\cite{LMO} describes the bilinear weights $A_{\vec{p},\vec{r}}$ in terms of the classical $A_p$ weights. This provides a useful tool in the study of weighted estimates with respect to bilinear weights. 
\begin{lemma}\cite[Lemma 5.3]{LMO}\label{weightrelation}
Let $\vec{p}=(p_{1},p_{2})$ with $1<p_{1},p_{2},p_{3}<\infty$ and $\vec{r}=(r_{1},r_{2},r_{3})$ with $1\leq r_{1},r_{2},r_{3}<\infty$. Let $p':=p_{3}$ and $\frac{1}{r}:=\sum^{3}_{i=1}\frac{1}{r_{i}}$. Assume that $r_{i}\leq p_{i}$ for $i=1,2$ and $r'_{3}>p$. Consider 
\begin{equation*} 
\frac{1}{\delta_{i}}=\frac{1}{r_{i}}-\frac{1}{p_{i}} \quad 
\text{ and } \quad 
\frac{1}{\theta_{i}}=\frac{1-r}{r}-\frac{1}{\delta_{i}},\quad  i=1,2,3.
\end{equation*}
Then  $\vec{w}=(w_{1},w_{2})\in A_{\vec{p},\vec{r}}$ if and only if  
	\begin{equation*}
	w^{\frac{\theta_{i}}{p_{i}}}_{i}\in A_{\frac{1-r}{r}\theta_{i}}  \quad \text{ with }\quad [w^{\frac{\theta_{i}}{p_{i}}}_{i}]_{A_{\frac{1-r}{r}\theta_{i}}}\leq [\vec{w}]^{\theta_{i}}_{A_{\vec{p},\vec{r}}},\quad i=1,2
	\end{equation*}
	and
	$$
	w^{\frac{\delta_{3}}{p}}\in A_{\frac{1-r}{r}\delta_{3}} \quad \text{ with } \quad [w^{\frac{\delta_{3}}{p}}]_{ A_{\frac{1-r}{r}\delta_{3}}}\leq [\vec{w}]^{\delta_{3}}_{A_{\vec{p},\vec{r}}}.
	$$
\end{lemma}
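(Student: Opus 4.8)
\emph{Strategy of the proof.} The plan is to recast the $A_{\vec p,\vec r}$ condition in a symmetric form and then read off both implications from two uses of Hölder's inequality, from $A_p$--duality, and from the elementary bound $\langle e^{X}\rangle_{Q}\ge e^{\langle X\rangle_{Q}}$. First I would record the algebraic identities $\tfrac{r_i}{r_i-p_i}=-\tfrac{\delta_i}{p_i}$ for $i=1,2$ and $\tfrac{r_3'}{r_3'-p}=\tfrac{\delta_3}{p}$, which turn Definition~\ref{lmodef} into
\[
[\vec w]_{A_{\vec p,\vec r}}=\sup_{Q}\,\langle\sigma_1\rangle_Q^{1/\delta_1}\langle\sigma_2\rangle_Q^{1/\delta_2}\langle\sigma_3\rangle_Q^{1/\delta_3},\qquad \sigma_1:=w_1^{-\delta_1/p_1},\ \ \sigma_2:=w_2^{-\delta_2/p_2},\ \ \sigma_3:=w^{\delta_3/p}.
\]
Two facts drive everything: (i) $\sum_{j=1}^{3}\tfrac1{\delta_j}=\tfrac{1-r}{r}=:R$, which is exactly the hypotheses $r_i\le p_i$ and $r_3'>p$ rewritten (and forces $R>0$); and (ii) since $w=w_1^{p/p_1}w_2^{p/p_2}$, one has the pointwise identity $\sigma_1^{1/\delta_1}\sigma_2^{1/\delta_2}\sigma_3^{1/\delta_3}=1$. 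Moreover, $A_p$--duality together with $\tfrac1{\theta_i}=R-\tfrac1{\delta_i}$, $R\delta_i-1=\delta_i/\theta_i$ and $(R\delta_i)'=R\theta_i$ shows $\sigma_i\in A_{R\delta_i}\iff w_i^{\theta_i/p_i}=\sigma_i^{-\theta_i/\delta_i}\in A_{R\theta_i}$ for $i=1,2$, with $[w_i^{\theta_i/p_i}]_{A_{R\theta_i}}=[\sigma_i]_{A_{R\delta_i}}^{\theta_i/\delta_i}$. Hence the lemma is equivalent to the symmetric statement \emph{$\sigma_j\in A_{R\delta_j}$ for $j=1,2,3$}, which is what I would actually prove.

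For necessity, assume $[\vec w]_{A_{\vec p,\vec r}}<\infty$, fix $j$ and a cube $Q$. In $[\sigma_j]_{A_{R\delta_j}}=\sup_Q\langle\sigma_j\rangle_Q\langle\sigma_j^{1-(R\delta_j)'}\rangle_Q^{R\delta_j-1}$ the first factor is bounded directly by the hypothesis; for the second, use (ii) to write $\sigma_j^{1-(R\delta_j)'}=\sigma_j^{-1/(R\delta_j-1)}=\prod_{l\ne j}\sigma_l^{\lambda_l}$ with $\lambda_l>0$ and $\sum_{l\ne j}\lambda_l=1$ (the last equality being precisely fact (i)). Hölder's inequality then gives $\langle\sigma_j^{1-(R\delta_j)'}\rangle_Q\le\prod_{l\ne j}\langle\sigma_l\rangle_Q^{\lambda_l}$, and a comparison of exponents yields $[\sigma_j]_{A_{R\delta_j}}\le[\vec w]_{A_{\vec p,\vec r}}^{\delta_j}$. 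Combined with the duality dictionary established above, this produces the quantitative bounds $[w^{\delta_3/p}]_{A_{R\delta_3}}\le[\vec w]_{A_{\vec p,\vec r}}^{\delta_3}$ and $[w_i^{\theta_i/p_i}]_{A_{R\theta_i}}\le[\vec w]_{A_{\vec p,\vec r}}^{\theta_i}$, $i=1,2$.

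For sufficiency, suppose $\sigma_j\in A_{R\delta_j}$ for all $j$, and set $\tfrac1{\theta_j}:=\sum_{l\ne j}\tfrac1{\delta_l}$, so $R\delta_j-1=\delta_j/\theta_j$. Each hypothesis gives $\langle\sigma_j\rangle_Q\le[\sigma_j]_{A_{R\delta_j}}\langle\sigma_j^{-\theta_j/\delta_j}\rangle_Q^{-\delta_j/\theta_j}$; raising to the power $1/\delta_j$ and multiplying over $j$, it remains to show $\prod_j\langle\sigma_j^{-\theta_j/\delta_j}\rangle_Q^{1/\theta_j}\ge1$. Using (ii) to write $\sigma_j^{-\theta_j/\delta_j}=\prod_{l\ne j}\sigma_l^{\theta_j/\delta_l}$ and then Jensen's inequality $\langle e^{X}\rangle_Q\ge e^{\langle X\rangle_Q}$, one obtains $\langle\sigma_j^{-\theta_j/\delta_j}\rangle_Q^{1/\theta_j}\ge\exp\!\big(\sum_{l\ne j}\tfrac1{\delta_l}\langle\log\sigma_l\rangle_Q\big)$, and the product over $j=1,2,3$ has exponent $2\,\langle\log(\sigma_1^{1/\delta_1}\sigma_2^{1/\delta_2}\sigma_3^{1/\delta_3})\rangle_Q=0$ by (i) and (ii). Hence $[\vec w]_{A_{\vec p,\vec r}}\le\prod_j[\sigma_j]_{A_{R\delta_j}}^{1/\delta_j}<\infty$.

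The conceptual content is light; the main source of difficulty is the exponent bookkeeping --- verifying that each Hölder split has nonnegative exponents summing to exactly $1$, that the resulting Muckenhoupt indices come out as $R\delta_3$ and $R\theta_i$, and that $R\delta_j\ge1$ (true because the remaining $1/\delta_l$ are nonnegative, with strict inequality once one of them is positive, so the $A_q$ classes are non-vacuous). One must also check the boundary conventions of Definition~\ref{lmodef}: for $r_3=1$ the formulas are already consistent ($r_3'=\infty$, $\delta_3=p$, $\sigma_3=w$, exponent $1/p$), while for $r_i=p_i$ one has $\delta_i=\infty$, the $i$-th factor of the characteristic degenerates to $\essup_Q w_i^{-1/p_i}$, and the condition on $w_i$ degenerates ($\theta_i=1/R$) into an $A_1$-type statement; these cases follow from the same identities read in the limit, or by a direct check.
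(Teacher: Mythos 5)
Your argument is correct, but note that the paper does not prove this lemma at all: it is quoted verbatim from \cite[Lemma 5.3]{LMO}, so there is no in-paper proof to compare against. Your reduction to the symmetric statement $\sigma_j\in A_{R\delta_j}$ via the identities $\sum_j 1/\delta_j=R$ and $\prod_j\sigma_j^{1/\delta_j}=1$, followed by H\"older for necessity and Jensen ($\langle e^X\rangle_Q\ge e^{\langle X\rangle_Q}$) for sufficiency, is essentially the argument of \cite{LMO}, and the exponent bookkeeping checks out (e.g.\ $R\delta_i-1=\delta_i/\theta_i$, $(R\delta_i)'=R\theta_i$, and $\lambda_l(R\delta_j-1)=\delta_j/\delta_l$, which gives exactly $[\sigma_j]_{A_{R\delta_j}}\le[\vec w]^{\delta_j}_{A_{\vec p,\vec r}}$). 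The only points left as ``a direct check'' are the degenerate cases $r_i=p_i$ and $\sum_{l\ne j}1/\delta_l=0$, where the H\"older split collapses and one must argue with the $\essup$/$A_1$ form of the condition; these are genuinely needed for the statement as written but are routine.
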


In \cite{Nie}, Nieraeth presented an alternative approach to describe the bilinear weights $A_{\vec{p},\vec{r}}$ and defined yet another class of weights that is equivalent to the class defined in \cite{LMO}. Nieraeth extended the extrapolation results contained in \cite{LMO} in several directions. 
\begin{definition}\cite[Definition 2.1]{Nie} 
\label{wNier}
Let $\vec{p}=(p_1,p_2)$, $\vec{q}=(q_1,q_2)$ with $p_1,p_2\in (0,\infty)$ and $q_1, q_2\in (0,\infty]$. Let $q$ be given by $\frac{1}{q}=\frac{1}{q_1}+\frac{1}{q_2}$. We say $(\vec{p},s)\leq \vec{q}$ if $\vec{p}\leq \vec{q}$ and $q\leq s$ where $s\in (0,\infty]$. Here $\vec{p}\leq \vec{q}$ means that $p_i\leq q_i$, $i=1,2$. For weights $w_1, w_2$ write $w=\prod_{i=1}^2w_i$. We say that $\vec{w}=(w_1,w_2) \in A_{\vec{q},(\vec{p},s)}$ if 
\begin{equation*}
[\vec{w}]_{\vec{q},(\vec{p},s)}:=\sup_{Q} \Big(\prod_{i=1}^2 \langle w^{-1}_i\rangle_{\frac{1}{\frac{1}{p_i}-\frac{1}{q_i}}, Q} \langle w\rangle_{\frac{1}{\frac{1}{q}-\frac{1}{s}}, Q}\Big) <\infty,
\end{equation*}
where the supremum in the above is taken over all cubes (with sides parallel to coordinate axes) in $\mathbb R^n$. 
\end{definition} 
\begin{remark}
Note that the definition above includes the case $q_j=\infty.$ In this case the norm is interpreted as $\|f_j\|_{L^{q_j}(w^{q_j}_j)}=\|f_jw_j\|_{L^{\infty}}.$ Also, the definition is used with $\frac{1}{q_j}=0$ when $q_j=\infty.$ We refer to \cite{Nie} for more details on this. We would like to refer the reader to \cite{LMO1}, where authors consider a slightly different approach to include the end-points cases which allows one or more indices to take value infinity. Further, note that when $q_j$ are finite, the following relation holds: $(w^{q_{1}}_{1},w^{q_{2}}_{2})\in A_{\vec{q},(r_{1},r_{2},t)}$ if and only if $\vec{w}\in A_{\vec{q},(\vec{r},t')}$.
\end{remark}

\section{Proofs of weighted estimates} 
\label{proofwe}

\subsection{Proof of Theorem \ref{lacweighted}}

As pointed out earlier, the proof of Theorem~\ref{lacweighted} follows from the sparse domination result Theorem~\ref{mainthm1:lac} and the already well-known consequences in the literature. 

\begin{theorem}\cite[Corollary 2.15]{LMO}
\label{LMO}
	Fix $\vec{r}=(r_{1},r_{2},r_{3})$, with $r_{i}\geq 1$ and $\sum^{3}_{i=1}\frac{1}{r_{i}}>1$, and a sparsity constant $\eta\in (0,1)$. Let $T$ be an operator so that for every $f_{1},f_{2},h\in C^{\infty}_{c}(\mathbb{R}^{n})$
	\begin{equation*}
	\Big|\int_{\mathbb{R}^{n}}T(f_1,f_2)(x) h(x) \,dx\Big|\lesssim \sup_{\mathcal{S}}\Lambda_{\mathcal{S},\vec{r}}(f_{1},f_{2},h),
	\end{equation*}
	where the supremum runs over all sparse families with sparsity constant $\eta$. Then for all exponents $\vec{q}=(q_{1},q_{2})$, with $r_{i}<q_{i}$ for $i=1,2$ and $r'_{3}>q$ and all the weights $\vec{v}=(v_{1},v_{2})\in A_{\vec{q},\vec{r}}$, and for all $f_{1},f_{2},h\in C^{\infty}_{c}(\mathbb{R}^{n})$, we have
$$
	\| T(f_{1},f_{2})\|_{L^{q}(v)}\lesssim \prod^{2}_{i=1}\| f_{i}\|_{L^{q_{i}}(v_{i})},
$$
	where $\frac{1}{q}=\frac{1}{q_{1}}+\frac{1}{q_{2}}$ and $v=v^{\frac{q}{q_{1}}}_{1}v^{\frac{q}{q_{2}}}_{2}$. 
	\end{theorem}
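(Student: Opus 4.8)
The plan is to strip off the operator $T$ at once via the hypothesis and reduce the whole statement to a weighted bound for the trilinear sparse form. Fixing any $\eta$-sparse family $\mathcal{S}$, it suffices to prove
\[
\Lambda_{\mathcal{S},\vec{r}}(f_{1},f_{2},h)\ \lesssim\ [\vec{v}]_{A_{\vec{q},\vec{r}}}^{\,\kappa}\,\|f_{1}\|_{L^{q_{1}}(v_{1})}\|f_{2}\|_{L^{q_{2}}(v_{2})}\,\|h\|_{L^{q'}(v^{1-q'})}
\]
with $\kappa$ and the implicit constant depending only on $n$, $\vec{q}$, $\vec{r}$ and $\eta$; taking the supremum over $\mathcal{S}$ and dualizing the $L^{q}(v)$ norm then gives the claim. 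When $q>1$ this dualization is ordinary $L^{q}$--$L^{q'}$ duality with dual weight $v^{1-q'}$; when $0<q\le1$ one uses instead the reverse-Hölder (negative-exponent) description of the $L^{q}$ quasi-norm exploited in \cite{LMO,Nie}, or else deduces this range from the range $q>1$ by the multilinear Rubio de Francia extrapolation of those references. Thus the heart of the proof is the weighted boundedness of the sparse form.

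To prove the displayed inequality I would first use sparseness to pass from the sum over $S$ to an integral: since the sets $E_{S}$ are pairwise disjoint and $|S|\le\eta^{-1}|E_{S}|$, it is enough to bound $\sum_{S}|E_{S}|\langle f_{1}\rangle_{S,r_{1}}\langle f_{2}\rangle_{S,r_{2}}\langle h\rangle_{S,r_{3}}$. Inside each local average I insert Hölder's inequality so as to bring in the weights. Writing $\tfrac1{\delta_{i}}:=\tfrac1{r_{i}}-\tfrac1{q_{i}}>0$ for $i=1,2$ (legitimate since $r_{i}<q_{i}$) and $\tfrac1{\delta_{3}}:=\tfrac1{r_{3}}-\tfrac1{q'}>0$ (legitimate since $r_{3}'>q$ forces $q'>r_{3}$, interpreted appropriately in the low-integrability range), one has
\[
\langle f_{i}\rangle_{S,r_{i}}\ \le\ \langle |f_{i}|^{q_{i}}v_{i}\rangle_{S}^{1/q_{i}}\,\langle v_{i}^{-\delta_{i}/q_{i}}\rangle_{S}^{1/\delta_{i}},\qquad\langle h\rangle_{S,r_{3}}\ \le\ \langle |h|^{q'}v^{1-q'}\rangle_{S}^{1/q'}\,\langle (v^{1-q'})^{-\delta_{3}/q'}\rangle_{S}^{1/\delta_{3}}.
\]
After this substitution $\Lambda_{\mathcal{S},\vec{r}}$ is controlled by a sum over $S$ of $|E_{S}|$ times a product of three genuine weighted local averages of $f_{1},f_{2},h$ and three purely weight-dependent averages.

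The decisive step is to absorb the three weight averages using the hypothesis $\vec{v}\in A_{\vec{q},\vec{r}}$. This is exactly where Lemma \ref{weightrelation} enters: with the exponents $\delta_{i}$, $\theta_{i}$ provided there, membership in $A_{\vec{q},\vec{r}}$ is equivalent to explicit powers of $v_{1}$, $v_{2}$ and of $v$ lying in Muckenhoupt classes with characteristics bounded by powers of $[\vec{v}]_{A_{\vec{q},\vec{r}}}$; in particular these powers are $A_{\infty}$ weights, which yields reverse-Hölder estimates for the weight averages on every cube $S$. Feeding these back in and applying Hölder's inequality in the variable $S$ (with exponents conjugate to the natural $q_{i}/r_{i}$-type exponents), followed by a weighted Carleson-embedding argument in the spirit of Lerner--Nazarov, collapses the sum to
\[
\Lambda_{\mathcal{S},\vec{r}}(f_{1},f_{2},h)\ \lesssim\ [\vec{v}]_{A_{\vec{q},\vec{r}}}^{\,\kappa}\,\|f_{1}\|_{L^{q_{1}}(v_{1})}\|f_{2}\|_{L^{q_{2}}(v_{2})}\,\|h\|_{L^{q'}(v^{1-q'})},
\]
the Hardy--Littlewood maximal theorem entering at the end to bound the $q_{i}$- and $q'$-maximal functions of $f_{i}v_{i}^{1/q_{i}}$ and of $h\,(v^{1-q'})^{1/q'}$, which is possible precisely because $q_{i}>r_{i}\ge1$ and $q'>r_{3}\ge1$.

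I expect the main obstacle to be the bookkeeping in this last step --- choosing the Hölder exponents in each average so that the weight factors that fall out recombine \emph{exactly} into the characteristic $[\vec{v}]_{A_{\vec{q},\vec{r}}}$, for which the quantitative content of Lemma \ref{weightrelation} is indispensable --- together with the low-integrability endpoint $0<q\le1$, where plain duality is unavailable and one must resort to the negative-exponent formulation (or to extrapolation). The remaining ingredients, namely the reduction to the sparse form, the Hölder splittings, and the maximal-function bounds, are routine.
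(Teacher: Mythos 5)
First, a point of reference: the paper offers no proof of this statement --- it is quoted verbatim as \cite[Corollary 2.15]{LMO} and used as a black box to pass from the sparse domination in Theorem \ref{mainthm1:lac} to the weighted bounds of Theorem \ref{lacweighted}. So your attempt has to be measured against the argument in \cite{LMO} (and its variant in \cite{Nie}), not against anything in this paper.

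Your reduction to a weighted bound for the sparse form and the H\"older splitting on each cube are correct; in fact the three weight averages you produce, $\langle v_i^{-\delta_i/q_i}\rangle_S^{1/\delta_i}$ and $\langle (v^{1-q'})^{-\delta_3/q'}\rangle_S^{1/\delta_3}$, recombine \emph{exactly} into the quantity defining $[\vec{v}]_{A_{\vec{q},\vec{r}}}$ (one checks $v_i^{r_i/(r_i-q_i)}=v_i^{-\delta_i/q_i}$ and $v^{r_3'/(r_3'-q)}=(v^{1-q'})^{-\delta_3/q'}$, with matching exponents $1/\delta_i$ and $1/\delta_3$), so no reverse H\"older is needed at that stage. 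The genuine gap is in the final summation. After your splitting, what remains is $\sum_S|E_S|\,\langle|f_1|^{q_1}v_1\rangle_S^{1/q_1}\langle|f_2|^{q_2}v_2\rangle_S^{1/q_2}\langle|h|^{q'}v^{1-q'}\rangle_S^{1/q'}$, and since $1/q_1+1/q_2+1/q'=1$, H\"older over $S$ leaves factors of the form $\sum_S|E_S|\langle g\rangle_S$, which can only be bounded by $\|Mg\|_{L^1}$ and is in general infinite (already for $g=\chi_{[0,1]}$ and the sparse family $S_k=[0,2^k]$, $E_{S_k}=(2^{k-1},2^k]$). In other words, by pushing the local averages all the way up to the exponents $q_1,q_2,q'$ you have spent the entire gap $r_i<q_i$, $q'>r_3$ pointwise on each cube and have nothing left to make the sum converge. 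The actual proofs keep intermediate exponents strictly between $r_i$ and $q_i$, use the $A_\infty$/reverse-H\"older information encoded in Lemma \ref{weightrelation} to handle the resulting off-diagonal weight averages, and only then apply the Carleson embedding theorem for $A_\infty$ weights; alternatively, \cite{LMO} establishes the estimate at a single duality-friendly tuple and obtains the full range --- including $q\le 1$, where your dualization is unavailable and which is not a routine fix --- from their multilinear extrapolation theorem. Your sketch names these ingredients but defers the decisive estimate to unspecified ``bookkeeping''; as written, the summation step you describe fails.
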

In view of the above theorem, the sparse domination result contained in Theorem~\ref{mainthm1:lac} yields the weighted estimates in Theorem \ref{lacweighted}. 

\subsection{Quantitative bounds in Theorem \ref{lacweighted}}

In \cite{Nie}, an improvement of the quantitative bounds obtained from sparse domination in multilinear forms was achieved. Indeed, the results in \cite{LMO} missed the quantitative weighted bounds for the range $q<1$. This range was accomplished in \cite{Nie}. 

\begin{theorem} \cite[Corollary 4.2]{Nie} Let $T$ be a bilinear or positive valued bi-sublinear operator and assume that for some $p_{1},p_{2}\in (0,\infty)$ and $t\in [1, \infty]$, we have the sparse domination of the bilinear operator for every $f_{1},f_{2},h\in C^{\infty}_{c}(\mathbb{R}^{n})$, i.e., 
\begin{equation*}
	\Big|\int_{\mathbb{R}^{n}}T(f_1,f_2)(x) h(x) \,dx\Big|\lesssim \sup_{\mathcal{S}}\Lambda_{\mathcal{S},(p_1,p_2,t)}(f_{1},f_{2},h),
\end{equation*}
then for all $\vec{q}=(q_{1},q_{2})$ with $q_{1},q_{2}\in (0,\infty]$ such that $(\vec{p},t')< \vec{q}$ and all weights $\vec{w}\in A_{\vec{q},(\vec{p},t')}$, the operator $T$ extends to a bounded operator $L^{q_1}(w^{q_1}_1)\times L^{q_2}(w_2^{q_2}) \rightarrow L^{q}(w^{q})$, where $\frac{1}{q}=\frac{1}{q_{1}}+\frac{1}{q_{2}}$, with the bound
\begin{equation*}
\|T\|_{L^{q_1}(w_1^{q_1})\times L^{q_2}(w_2^{q_2}) \rightarrow L^{q}(w^{q})}\lesssim [\vec{w}]_{A_{\vec{q},(\vec{p},t')}}^{\max\Big(\frac{\frac{1}{p_1}}{\frac{1}{p_1}-\frac{1}{q_1}},\frac{\frac{1}{p_2}}{\frac{1}{p_2}-\frac{1}{q_2}},\frac{1-\frac{1}{t'}}{\frac{1}{q}-\frac{1}{t'}}\Big)}.
\end{equation*}
\end{theorem}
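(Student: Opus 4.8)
\emph{Strategy.} I would obtain the weighted bound from the sparse hypothesis by a duality/linearisation reduction to the trilinear sparse form, followed by a weighted estimate for that form with explicit constants. Assume first $q\ge 1$; then $q_1,q_2\ge q\ge 1$ and $q'\ge 1$. Writing $w=w_1w_2$ and using $L^{q'}$-duality, $\|T(f_1,f_2)\|_{L^q(w^q)}=\sup\{|\langle T(f_1,f_2),h\rangle|:\|hw^{-1}\|_{L^{q'}}\le1\}$, so by hypothesis it suffices to prove that for every $\eta$-sparse family $\mathcal S$,
\[
\Lambda_{\mathcal S,(p_1,p_2,t)}(f_1,f_2,h)\ \lesssim\ [\vec w]_{A_{\vec q,(\vec p,t')}}^{\gamma}\,\|f_1\|_{L^{q_1}(w_1^{q_1})}\|f_2\|_{L^{q_2}(w_2^{q_2})}\|h\|_{L^{q'}(w^{-q'})},
\]
with $\gamma$ the asserted maximum; the case of general $f_i\in L^{q_i}(w_i^{q_i})$ then follows by density. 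When $q<1$ ordinary duality is unavailable; here I would instead exploit that $(\vec p,t')<\vec q$ is an \emph{open} condition and either interpolate the strong bounds at two admissible tuples straddling $\vec q$, or --- closer to \cite{Nie} --- first pass to a positive sparse operator (at the cost of an auxiliary maximal function, as below) and use $(\sum_S a_S\mathbf 1_S)^q\le\sum_S a_S^q\mathbf 1_S$ to run the same scheme directly on $\|T(f_1,f_2)\|_{L^q(w^q)}^q$.

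\emph{The weighted sparse estimate.} By sparsity $|S|\le\eta^{-1}|E_S|$ with the sets $E_S$ pairwise disjoint, while $\langle f_i\rangle_{S,p_i}\le\inf_{E_S}M_{p_i}f_i$ and $\langle h\rangle_{S,t}\le\inf_{E_S}M_t h$, where $M_r g:=M(|g|^r)^{1/r}$. Hence
\[
\Lambda_{\mathcal S,(p_1,p_2,t)}(f_1,f_2,h)\ \le\ \eta^{-1}\int_{\R^n}M_{p_1}f_1\cdot M_{p_2}f_2\cdot M_t h\,dx .
\]
Since $w_1\cdot w_2\cdot(w_1w_2)^{-1}=1$ and $\tfrac1{q_1}+\tfrac1{q_2}+\tfrac1{q'}=1$, Hölder's inequality bounds the right-hand side by $\|M_{p_1}f_1\|_{L^{q_1}(w_1^{q_1})}\|M_{p_2}f_2\|_{L^{q_2}(w_2^{q_2})}\|M_t h\|_{L^{q'}(w^{-q'})}$. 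Each factor is controlled by the weighted maximal inequality: $M_{p_i}$ is bounded on $L^{q_i}(w_i^{q_i})$ because $q_i>p_i$ and $w_i^{q_i}\in A_{q_i/p_i}$, and $M_t$ is bounded on $L^{q'}(w^{-q'})$ because $q<t'$ (hence $q'>t$) and $w^{-q'}\in A_{q'/t}$. These memberships, together with the quantitative comparisons turning $[w_i^{q_i}]_{A_{q_i/p_i}}$ and $[w^{-q'}]_{A_{q'/t}}$ into powers of $[\vec w]_{A_{\vec q,(\vec p,t')}}$, follow from a description of the class in Definition \ref{wNier} in terms of the classical Muckenhoupt classes, entirely analogous to Lemma \ref{weightrelation}; indeed the exponents in Definition \ref{wNier} are arranged precisely so that this holds. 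Feeding in Buckley's sharp bound $\|M\|_{L^s(v)\to L^s(v)}\lesssim[v]_{A_s}^{1/(s-1)}$ then produces a bound of the stated shape.

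\emph{Sharp exponent and main obstacle.} The crude combination above yields $[\vec w]_{A_{\vec q,(\vec p,t')}}$ raised to essentially the \emph{sum} of the three natural exponents $\tfrac{1/p_1}{1/p_1-1/q_1}$, $\tfrac{1/p_2}{1/p_2-1/q_2}$, $\tfrac{1-1/t'}{1/q-1/t'}$, whereas the theorem asserts only their \emph{maximum}. Extracting the maximum is the crux: one must replace the simultaneous three-fold Hölder split by a stopping-time (principal-cube) decomposition of $\mathcal S$ in which, at each generation, only one of the three functions is charged with a genuine loss of $[\vec w]$-power --- via a sharp multilinear Carleson embedding theorem --- while the other two contribute merely bounded factors coming from the reverse-Hölder self-improvement of the associated $A_\infty$ weights. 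This is the quantitative sparse-to-weighted machinery of \cite{LMO,LMO1,Nie}; keeping its bookkeeping consistent with Definition \ref{wNier} is the part I expect to be most delicate, the quasi-Banach range $q<1$ being the other genuine nuisance.
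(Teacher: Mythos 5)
This statement is quoted verbatim from \cite[Corollary 4.2]{Nie}; the paper offers no proof of it, so there is no internal argument to compare yours against, and I assess the proposal on its own terms.

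There is a genuine gap at the heart of your ``weighted sparse estimate.'' After the three-fold H\"older split you need $M_{p_i}$ bounded on $L^{q_i}(w_i^{q_i})$ and $M_t$ bounded on $L^{q'}(w^{-q'})$, which forces the \emph{individual} conditions $w_i^{q_i}\in A_{q_i/p_i}$ and $w^{-q'}\in A_{q'/t}$. But the hypothesis is only $\vec w\in A_{\vec q,(\vec p,t')}$, and this multilinear class is strictly larger than the collection of such product-type weights --- that is the entire point of the multilinear weight theory, and indeed of Section \ref{compar} of this paper, which exhibits explicit power weights in the multilinear class that fail every product-type condition. Lemma \ref{weightrelation} does translate membership in the multilinear class into individual Muckenhoupt conditions, but on \emph{different} powers of $w_1$, $w_2$ and of the product $w$, with classes $A_{\frac{1-r}{r}\theta_i}$, $A_{\frac{1-r}{r}\delta_3}$ whose indices mix all three exponents; these do not imply the memberships your argument needs. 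So your scheme proves the theorem only for product-type weights, and the step ``each factor is controlled by the weighted maximal inequality'' fails precisely for the weights the theorem is meant to cover. The correct route in \cite{LMO,Nie} never factors the trilinear form into three independent linear maximal inequalities: it bounds the sparse form by a genuinely multi(sub)linear maximal operator (or runs a multilinear Carleson embedding with principal cubes), whose weighted boundedness is \emph{characterized} by the class $A_{\vec q,(\vec p,t')}$. You invoke that machinery in your last paragraph, but only as a device to improve the exponent from a sum to a maximum; in fact it is indispensable already for the qualitative conclusion. Two further, smaller issues you yourself flag: the claimed sharp exponent (the maximum of the three quantities) is not obtained by your argument, and the quasi-Banach range $q<1$ is left as a sketch --- note that interpolation of the target estimates is delicate there, whereas the pointwise inequality $\big(\sum_S a_S\mathbf{1}_{S}\big)^{q}\le\sum_S a_S^{q}\mathbf{1}_{S}$ applied to a positive sparse majorant is the standard fix.
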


In view of the theorem above, the sparse domination results obtained in Theorem~\ref{mainthm1:lac} yield the following improved weighted estimates for the operators $\mathcal M_{\operatorname{lac}}$ and $\mathcal M_{\operatorname{full}}$.
 
\begin{theorem} \label{lacweightedNie}
Let $n\geq 2$ and $(\frac{1}{r_{i}},\frac{1}{s_{i}})$, $i=1,2,$ be in the interior of $L_n$ (respectively $F_n$). Assume that
$\frac{1}{r_{1}}+\frac{1}{r_{2}}<1$ and $t=\frac{s_{1}s_{2}}{s_{1}+s_{2}-s_{1}s_{2}}>1$. Then for all $\vec{q}>(\vec{r}, t')$ the operator $\mathcal M_{\operatorname{lac}}$ (respectively $\mathcal M_{\operatorname{full}}$) extends to a bounded operator $L^{q_1}(w^{q_1}_1)\times L^{q_2}(w_2^{q_2}) \rightarrow L^{q}(w^{q})$, where $\frac{1}{q}=\frac{1}{q_{1}}+\frac{1}{q_{2}}$,  with the bound
\begin{equation*}
\|\mathcal M\|_{L^{q_1}(w_1^{q_1})\times L^{q_2}(w_2^{q_2}) \rightarrow L^{q}(w^{q})}\lesssim [\vec{w}]_{A_{\vec{q},(\vec{r},t')}}^{\max\Big(\frac{\frac{1}{r_1}}{\frac{1}{r_1}-\frac{1}{q_1}},\frac{\frac{1}{r_2}}{\frac{1}{r_2}-\frac{1}{q_2}},\frac{1-\frac{1}{t'}}{\frac{1}{q}-\frac{1}{t'}}\Big)},
\end{equation*}
where $\mathcal M:=\mathcal M_{\operatorname{lac}}$ (respectively $\mathcal M_{\operatorname{full}}$) and $A_{\vec{q},(\vec{r},t')}$ with $(\vec{r},t')=(r_1,r_2,t')$ is defined as in Definition \ref{wNier}.
\end{theorem}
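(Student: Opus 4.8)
The plan is to derive Theorem~\ref{lacweightedNie} as an immediate corollary of the sparse domination in Theorem~\ref{mainthm1:lac} together with the quantitative extrapolation result of Nieraeth quoted just above it (\cite[Corollary 4.2]{Nie}). First I would fix $n\ge 2$ and a pair $(\frac{1}{r_i},\frac{1}{s_i})$, $i=1,2$, in the interior of $L_n$ (resp.\ $F_n$), satisfying $\frac{1}{r_1}+\frac{1}{r_2}<1$ and $t=\frac{s_1s_2}{s_1+s_2-s_1s_2}>1$, exactly as in the hypotheses. The key observation is that these are precisely the hypotheses under which Theorem~\ref{mainthm1:lac} applies: for any $\rho_i>r_i$ with $\frac{1}{\rho_1}+\frac{1}{\rho_2}<1$ we get a sparse collection $\mathcal{S}=\mathcal{S}_{\rho_1,\rho_2,t}$ with
\[
\langle \mathcal{M}(f_1,f_2),h\rangle \le C\,\Lambda_{\mathcal{S}_{\rho_1,\rho_2,t}}(f_1,f_2,h)
\]
for all non-negative compactly supported bounded $f_1,f_2,h$, where $\mathcal{M}$ is $\mathcal{M}_{\operatorname{lac}}$ (resp.\ $\mathcal{M}_{\operatorname{full}}$).

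Next I would reconcile the two formulations of sparse domination. Theorem~\ref{mainthm1:lac} produces, for \emph{each} admissible $(\rho_1,\rho_2)$, a single sparse family; the hypothesis of \cite[Corollary 4.2]{Nie} asks for a bound by the supremum over \emph{all} sparse families with a fixed sparsity constant. This is routine: the collection $\mathcal{S}_{\rho_1,\rho_2,t}$ built in Theorem~\ref{mainthm1:lac} is $\eta$-sparse for a structural $\eta$, so $\Lambda_{\mathcal{S}_{\rho_1,\rho_2,t}}\le \sup_{\mathcal{S}}\Lambda_{\mathcal{S},(\rho_1,\rho_2,t)}$ trivially; and since $\mathcal{M}$ is bi-sublinear and positive-valued, the estimate extends from non-negative compactly supported bounded functions to all of $C^\infty_c$ by the usual splitting into positive and negative (and real/imaginary) parts and a limiting/truncation argument. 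Hence for any fixed admissible $(\rho_1,\rho_2)$, the hypothesis of \cite[Corollary 4.2]{Nie} is satisfied with $(p_1,p_2,t)=(\rho_1,\rho_2,t)$.

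Then I would invoke \cite[Corollary 4.2]{Nie} directly: for all $\vec q=(q_1,q_2)$ with $(\vec\rho,t')<\vec q$ and all $\vec w\in A_{\vec q,(\vec\rho,t')}$, the operator $\mathcal{M}$ maps $L^{q_1}(w_1^{q_1})\times L^{q_2}(w_2^{q_2})\to L^q(w^q)$ with
\[
\|\mathcal{M}\|_{L^{q_1}(w_1^{q_1})\times L^{q_2}(w_2^{q_2})\to L^q(w^q)}\lesssim [\vec w]_{A_{\vec q,(\vec\rho,t')}}^{\max\big(\frac{1/\rho_1}{1/\rho_1-1/q_1},\,\frac{1/\rho_2}{1/\rho_2-1/q_2},\,\frac{1-1/t'}{1/q-1/t'}\big)}.
\]
To obtain the statement with $\vec r$ in place of $\vec\rho$, I would use that the hypothesis $\vec q>(\vec r,t')$ is open: given such $\vec q$, one can choose $\rho_i$ with $r_i<\rho_i<q_i$ and still $\frac{1}{\rho_1}+\frac{1}{\rho_2}<1$ (shrinking towards $\vec r$), so that $\vec q>(\vec\rho,t')$ as well and $A_{\vec q,(\vec r,t')}\subset A_{\vec q,(\vec\rho,t')}$ with comparable characteristic in the relevant direction — more precisely, one runs the previous display with this $\vec\rho$ and then lets $\rho_i\downarrow r_i$, the exponents in the $\max$ converging to the ones claimed with $\vec r$. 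This yields the asserted bound.

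The only genuinely delicate point is this last limiting step: one must make sure that the weight class and the exponent in the power bound behave continuously as $\rho_i\downarrow r_i$, i.e.\ that a weight in $A_{\vec q,(\vec r,t')}$ lies in $A_{\vec q,(\vec\rho,t')}$ for $\rho$ close to $r$ with characteristic controlled uniformly enough to pass to the limit, and that $\frac{1/\rho_i}{1/\rho_i-1/q_i}\to \frac{1/r_i}{1/r_i-1/q_i}$; both are elementary from the definition in Definition~\ref{wNier} and the strictness $q_i>r_i$, $q<t'$. Everything else is bookkeeping: checking the arithmetic relations among $\vec r=(r_1,r_2,t)$, $t'$, and $\vec q$ match the indexing conventions of \cite{Nie} (in particular the relation noted in the remark after Definition~\ref{wNier}, $(w_1^{q_1},w_2^{q_2})\in A_{\vec q,(r_1,r_2,t)}$ iff $\vec w\in A_{\vec q,(\vec r,t')}$), and recording that $\mathcal{M}_{\operatorname{lac}}$ and $\mathcal{M}_{\operatorname{full}}$ are positive bi-sublinear, hence covered by the hypotheses of the cited corollary.
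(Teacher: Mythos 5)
Your overall route is exactly the paper's: the paper's entire proof of Theorem~\ref{lacweightedNie} is the single observation that the sparse domination of Theorem~\ref{mainthm1:lac} feeds directly into Nieraeth's quantitative extrapolation \cite[Corollary 4.2]{Nie}. Your reconciliation of the two formulations of sparse domination (single family versus supremum over $\eta$-sparse families, and the extension from non-negative bounded compactly supported functions to $C^\infty_c$ by positivity) is fine and is implicitly what the paper does.

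However, the step you yourself flag as ``the only genuinely delicate point'' is handled incorrectly, and as written it is a gap. You need to pass from the sparse form with exponents $(\rho_1,\rho_2,t)$, $\rho_i>r_i$, to a conclusion for the weight class $A_{\vec q,(\vec r,t')}$ with exponent $\max\bigl(\tfrac{1/r_i}{1/r_i-1/q_i},\dots\bigr)$. The inclusion you invoke goes the wrong way: from Definition~\ref{wNier}, the exponent on $\langle w_i^{-1}\rangle$ is $\bigl(\tfrac{1}{p_i}-\tfrac{1}{q_i}\bigr)^{-1}$, which \emph{increases} as $p_i$ increases, so by Jensen $[\vec w]_{A_{\vec q,(\vec\rho,t')}}\ge[\vec w]_{A_{\vec q,(\vec r,t')}}$ and hence $A_{\vec q,(\vec\rho,t')}\subseteq A_{\vec q,(\vec r,t')}$, not the reverse. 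The converse containment for $\rho$ close to $r$ is a genuine self-improvement (reverse H\"older) statement for multilinear Muckenhoupt classes, not ``elementary from the definition''; moreover the admissible increment and the resulting characteristic depend on the individual weight, so you cannot let $\rho_i\downarrow r_i$ and retain a bound that is a fixed power of $[\vec w]_{A_{\vec q,(\vec r,t')}}$. The correct (and simpler) fix is to perturb in the other variable: since $(\tfrac{1}{r_i},\tfrac{1}{s_i})$ lies in the \emph{interior} of $L_n$ (resp.\ $F_n$) and $\tfrac{1}{r_1}+\tfrac{1}{r_2}<1$ is an open condition, choose $\tilde r_i<r_i$ with $(\tfrac{1}{\tilde r_i},\tfrac{1}{s_i})$ still interior and $\tfrac{1}{r_1}+\tfrac{1}{r_2}<1$ preserved, and apply Theorem~\ref{mainthm1:lac} with the pair $(\tilde r_1,\tilde r_2)$ and $\rho_i=r_i$. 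This yields the sparse form with exponents exactly $(r_1,r_2,t)$, and then \cite[Corollary 4.2]{Nie} applies verbatim with $(p_1,p_2,t)=(r_1,r_2,t)$, giving precisely the stated class and exponent. With that replacement your argument is complete.
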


\begin{remark}
\label{rem:vv}
Note that the end-point extrapolation results in~\cite{LMO1, Nie} allow the index $q_j$ in the theorem above to take value infinity. Moreover, the original Theorem \ref{LMO} contained in \cite{LMO} includes vector-valued results. These apply to our sparse domination in Theorem~\ref{lacweighted}, so that vector-valued inequalities are immediately obtained from \cite[Corollary 2.15]{LMO}, see also \cite[Corollary 4.6]{Nie}.

\end{remark}

\subsection{Weighted boundedness for the triplet $(2,2,1)$}

In this section we present the proof of Theorem~\ref{radial} and, as explained, such a proof will give Proposition \ref{remark} as a by-product. We shall use the ideas from \cite{Calderon, JSK,XQ} in order to prove our theorem.

\begin{proof}[Proof of Theorem \ref{radial}]
We present the proof of the theorem for the operator $\mathcal{M}_{\operatorname{lac}}$. The case of the operator $\mathcal{M}_{\operatorname{full}}$ may be dealt with using the similar ideas with appropriate modifications. The proof of Theorem~\ref{radial} is done in two steps. The first step is to establish a more general result by using analytic interpolation for a family of bilinear operators. Then in the second step we use this general result with a suitable choice of exponents to deduce the theorem. 

{\bf Step I:} Let $1<p_{1},p_{2}<\infty$, $\frac{1}{p}=\frac{1}{p_1}+\frac{1}{p_2},$ and $(\frac{1}{r_{i}},\frac{1}{s_{i}})\in L_{n}$, $i=1,2$ with $\frac{1}{r_{1}}+\frac{1}{r_{2}}<1.$ Write $t=\frac{s_{1}s_{2}}{s_{1}+s_{2}-s_{1}s_{2}}.$ For $\vec{r}=(r_{1},r_{2},t)<\vec{p}:=(p_1,p_2,p)$, let $\vec{w}=(w_{1},w_{2})\in A_{\vec{p},\vec{r}}.$ By Theorem \ref{lacweighted}  we have
 \begin{equation}
 \label{eqs}
 \|\mathcal{M}_{\operatorname{lac}}(f_{1},f_{2})\|_{L^{p}(w)}\leq C_{1}\| f_{1}\|_{L^{p_{1}}(w_{1})}\| f_{2}\|_{L^{p_{2}}(w_{2})}.
 \end{equation}
 Also, note that by Theorem \ref{product} we have the following weighted estimates for the product weights. 
 \begin{equation}\label{eqp}
 \|\mathcal{M}_{\operatorname{lac}}(f_{1},f_{2})\|_{L^{q}(v)}\leq C_{2}\| f_{1}\|_{L^{q_{1}}(v_{1})}\| f_{2}\|_{L^{q_{2}}(v_{2})},
 \end{equation}
 for $1<q_{i}<\infty$, $\frac{1}{q}=\frac{1}{q_1}+\frac{1}{q_2}, v_{i}\in A_{\frac{q_{i}}{t_{i}}}\cap \operatorname{RH}_{\big(\frac{\phi'_{\operatorname{lac}}(\frac{1}{t_{i}})}{q_{i}}\big)'}$, $v=v_{1}^{\frac{q}{q_{1}}}v_{2}^{\frac{q}{q_{2}}}$, $q<1$ and $(\frac{1}{t_{i}},\frac{1}{\eta_{i}})\in L_{n}$ for some $\eta_i\in (1,\infty)$ and $1<t_{i}<q_{i}$, for $i=1,2$. 

We consider the linearised operator $\mathcal{M}_{\operatorname{lac}}$ as follows 
 $$
 \mathcal{M}_{\operatorname{lac}}(f_{1},f_{2})(x)=\mathcal{A}_{\tau(x)}f_{1}(x)\mathcal{A}_{\tau(x)}f_{2}(x),$$
 where $\tau$ is a measurable function from $\mathbb{R}^{n}$ to $[0,\infty)$. For $z\in \mathcal{S}:=\{z\in\mathbb{C}: 0\leq \operatorname{Re}(z)\leq 1\}$, consider the functions 
  \begin{equation*}
  \frac{1}{l(z)}:=\frac{1-z}{p}+\frac{z}{q}, \qquad \frac{1}{l_i(z)}:=\frac{1-z}{p_{i}}+\frac{z}{q_{i}},\qquad i=1,2. 
  \end{equation*}
Choose  $\theta\in (0,1)$ such that  
 \begin{equation*}
\frac{1}{l(\theta)}:=\frac{1-\theta}{p}+\frac{\theta}{q}=1, \qquad \frac{1}{l_i(\theta)}:=\frac{1-\theta}{p_{i}}+\frac{\theta}{q_{i}}=\frac{1}{2}, \qquad i=1,2.
\end{equation*}
Let $k\in(0,1)$ be a number such that $\frac{k}{p}+\frac{k}{q}<1$. 
Note that, for any linear operator $T$ we can write the following. 
 $$
 \| Tf\|^{k}_{L^{1}(\R^n)}=\| |Tf|^{k}\|_{L^{\frac{1}{k}}(\R^n)}=\sup_{\substack{ g\in L^{\frac{1}{1-k}}(\R^n)\\ \| g\|_{L^{\frac{1}{1-k}}(\mathbb{R}^{n})}=1}}\Big|\int |Tf|^{k}g\, dx\Big|.
 $$
 Consider 
 \begin{align*}
 \widetilde{v}_{N}(x)&=v(x),\quad \text{ if } \quad  v(x)\leq N \quad \text{ and } \quad 
 	 \widetilde{v}_{N}(x)=N, \quad \text{ if } \quad  v(x)>N,\\
 \widetilde{w}_{N}(x)&=w(x), \quad \text{ if } \quad w(x)\leq N \quad \text{ and } \quad   \widetilde{w}_{N}(x)=N, \quad \text{ if } \quad w(x)>N. 
 	\end{align*}
Let $f_{1},f_{2}$ be finite simple functions and $g$ be a non-negative finite simple function such that $\| f_{i}\|_{L^{2}(\mathbb{R}^{n})}=1$, for $i=1,2$, and $\| g\|_{L^{\frac{1}{1-k}}(\mathbb{R}^{n})}=1$.\\
With the notations introduced as above, consider the following function.
\begin{equation}\label{psi}
 \psi(z):=\int_{\mathbb{R}^{n}}\big|\mathcal{A}_{\tau(x)}f_{1,z}(x)\mathcal{A}_{\tau(x)}f_{2,z}(x) \widetilde{v}^{\frac{z}{q}}_{N}(x) \widetilde{w}^{\frac{(1-z)}{p}}_{N}(x) g^{\frac{1-\frac{k}{l(z)}}{(1-k)k}}\big|^{k}dx,
 \end{equation}
where 
 \begin{equation*}
 f_{j,z}:=|f_{j}|^{\frac{2}{l_j(z)}}e^{i u_{j}}(v_{j}+\epsilon)^{-\frac{z}{q_{j}}}(w_{j}+\epsilon)^{\frac{z-1}{p_{j}}},\quad j=1,2 	\end{equation*}
 for $z\in \mathcal{S}$, $\epsilon>0$ and $u_{j}\in [0,2\pi]$. Note that we have the following expression for $\psi(\theta)$, $\theta \in (0,1)$, 
 \begin{equation*}
 \psi(\theta)=\int_{\mathbb{R}^{n}}\big| \prod_{i=1}^2  \mathcal{A}_{\tau(x)}(f_{i}(v_{i}+\epsilon)^{-\frac{\theta}{q_{i}}}(w_{i}+\epsilon)^{\frac{\theta-1}{p_{i}}})(x)  \widetilde{v}^{\frac{\theta}{q}}_{N} \widetilde{w}^{\frac{1-\theta}{p}}_{N}\big|^{k}g(x)dx.
 \end{equation*}
For each $x\in\mathbb{R}^{n}$, the functions  $\mathcal{A}_{\tau(x)}f_{i,z}(x)$, $ \widetilde{v}^{\frac{z}{q}}_{N}(x)$, $ \widetilde{w}^{\frac{1-z}{p}}_{N}(x)$ and $g^{\frac{1-\frac{k}{l(z)}}{(1-k)k}}(x)$ are analytic in the domain $\{z\in\mathbb{C}:0<\operatorname{Re}(z)<1\}$. Therefore the integrand in \eqref{psi} is a continuous and subharmonic function in $z\in\mathcal{S}$. Also, using the H\"older's inequality with exponents  $\frac{p}{k}$ and $\frac{p}{p-k}$, it is easy to see that  $\psi$ is a bounded function. 
Moreover, the H\"older's inequality with exponents $\frac{p}{k}$ and $\frac{p}{p-k}$ and the fact that $\| f_{i}\|_{L^{2}(\mathbb{R}^{n})}=1$, $i=1,2$ and $\| g\|_{L^{\frac{1}{1-k}}(\mathbb{R}^{n})}=1$,  yield that 
$$
 |\psi(it)|\leq C^{k}_{1}.
$$
Similarly, using the H\"older's inequality with exponents $\frac{q}{k}$ and $\frac{q}{q-k},$ we get
 $$
 |\psi(1+i t)|\leq C^{k}_{2}.
 $$
 The constants $C_1, C_2$ are independent of $\epsilon, N$ and $\tau$.
 We invoke the maximum modulus principle for subharmonic functions to deduce that 
 \begin{align*}
 |\psi(\theta)| &= \int_{\mathbb{R}^{n}}\big| \prod_{i=1}^2\mathcal{A}_{\tau(x)}(f_{i}v_{i,\epsilon}^{-\frac{\theta}{q_{i}}}w_{i,\epsilon}^{\frac{\theta-1}{p_{i}}})(x)
   \widetilde{v}_{N}^{\frac{\theta}{q}} \widetilde{w}_{N}^{\frac{1-\theta}{p}}\big|^{k}g(x)dx \\
  & \leq  C^{k(1-\theta)}_{1}C^{k\theta}_{2}.
 \end{align*}
Here we have used the notation $v_{i,\epsilon}=v_i+\epsilon$ and  $w_{i,\epsilon}=w_i+\epsilon$ for $i=1,2.$ 
Therefore, using a duality argument we obtain that 
\begin{equation*}
\int_{\mathbb{R}^{n}}\big| \mathcal{A}_{\tau(x)}(f_{1}v_{1,\epsilon}^{-\frac{\theta}{q_{1}}}w_{1,\epsilon}^{\frac{\theta-1}{p_{1}}})(x)
\mathcal{A}_{\tau(x)}(f_{2}v_{2,\epsilon}^{-\frac{\theta}{q_{2}}}w_{2,\epsilon}^{\frac{\theta-1}{p_{2}}})(x)\big|  \widetilde{v}_{N}^{\frac{\theta}{q}} \widetilde{w}_{N}^{\frac{1-\theta}{p}}dx
\leq C \Big(\int_{\mathbb{R}^{n}}|f_{1}|^{2} \Big)^{\frac{1}{2}}\Big(\int_{\mathbb{R}^{n}}|f_{2}|^{2} \Big)^{\frac{1}{2}}.
\end{equation*}
Since the set of finite simple functions is dense in $L^{s}(\R^n),~1\leq s<\infty$, we get the estimate above for all $L^{2}(\R^n)$ functions $f_1$ and $f_2$. Next, recall that the constants $C_{1},C_{2}$ are independent of $\epsilon$, $N$ and $\tau$. Let $\epsilon\rightarrow 0$ and $N\rightarrow\infty$
and replace $f_{i}$ by $f_{i}v^{\frac{\theta}{q_{i}}}_{i}w^{\frac{1-\theta}{p_{i}}}_{i}$, $i=1,2,$ in the above to get that 
  \begin{multline*}
 \int_{\mathbb{R}^{n}}\big|\mathcal{A}_{\tau(x)}f_{1}(x)\mathcal{A}_{\tau(x)}f_{2}(x)\big|v^{\frac{\theta}{q}}(x)w^{\frac{1-\theta}{p}}(x)dx
 \\ \leq C \Big(\int_{\mathbb{R}^{n}}|f_{1}|^{2}v^{\frac{2\theta}{q_{1}}}_{1}w^{\frac{2(1-\theta)}{p_{1}}}_{1} \Big)^{\frac{1}{2}}\Big(\int_{\mathbb{R}^{n}}|f_{2}|^{2}v^{\frac{2\theta}{q_{2}}}_{2}w^{\frac{2(1-\theta)}{p_{2}}}_{2} \Big)^{\frac{1}{2}}.
\end{multline*}
Again, since the above constant $C$ in independent of $\tau$, we get the boundedness of the operator $\mathcal{M}_{\operatorname{lac}}$. 
\begin{multline}\label{interpolation}
\int_{\mathbb{R}^{n}}\big|\mathcal{M}_{\operatorname{lac}}(f_{1},f_{2})(x)\big|v^{\frac{\theta}{q}}(x)w^{\frac{1-\theta}{p}}(x)dx
	\\ \leq C \Big(\int_{\mathbb{R}^{n}}|f_{1}|^{2}v^{\frac{2\theta}{q_{1}}}_{1}w^{\frac{2(1-\theta)}{p_{1}}}_{1} \Big)^{\frac{1}{2}}\Big(\int_{\mathbb{R}^{n}}|f_{2}|^{2}v^{\frac{2\theta}{q_{2}}}_{2}w^{\frac{2(1-\theta)}{p_{2}}}_{2} \Big)^{\frac{1}{2}}.
\end{multline}

{\bf Step II:} We will use the estimate~\eqref{interpolation} above for radial weights with a suitable choice of exponents to conclude the proof of Theorem~\ref{radial} for the case of lacunary operator $\mathcal{M}_{\operatorname{lac}}$. 	

We make the following choice of exponents. For $\epsilon>0,$ let $p_{i}=2+2\epsilon$, $r_{i}=2+\epsilon$ and $(\frac{1}{r_{i}},\frac{1}{s_{i}})\in L_{n}$, $i=1,2$. As earlier we write $t=\frac{s_{1}s_{2}}{s_{1}+s_{2}-s_{1}s_{2}}$  and set $\vec{r}=(r_{1},r_{2},r_{3})$ with $r_{3}=t$. Let $\vec{w}=(|x|^{\alpha'},|x|^{\beta'})\in A_{\vec{p},\vec{r}}$ and note that the estimate~\eqref{eqs} holds for the bilinear weights $A_{\vec{p},\vec{r}}$. Next, for a small positive real number $\delta$, consider $q_i=2-\delta$, $i=1,2,$ and $\vec{v}=(|x|^a, |x|^b)$ with $1-n\leq a,b<(n-1)(1-\delta)$, then we know that the estimate~\eqref{eqp} holds for the operator $\mathcal{M}_{\operatorname{lac}}$. Therefore, by the previous step, the operator $\mathcal{M}_{\operatorname{lac}}$ satisfies the inequality~\eqref{interpolation} for the choice of exponents and weights considered above, i.e., we have 
\begin{multline*}
\int_{\mathbb{R}^{n}}\big|\mathcal{M}_{\operatorname{lac}}(f_{1},f_{2})(x)\big| |x|^{\frac{(a+b)\theta}{2-\delta}+\frac{(\alpha'+\beta')(1-\theta)}{2+2\epsilon}}dx
	\\ \leq C \Big(\int_{\mathbb{R}^{n}}|f_{1}|^{2} |x|^{\frac{2a\theta}{2-\delta}+\frac{2\alpha'(1-\theta)}{2+2\epsilon}}\Big)^{\frac{1}{2}}\Big(\int_{\mathbb{R}^{n}}|f_{2}|^{2}|x|^{\frac{2b\theta}{2-\delta}+\frac{2\beta'(1-\theta)}{2+2\epsilon}} \Big)^{\frac{1}{2}}.
\end{multline*}
with $\theta\in (0,1)$ such that 
$\frac{1-\theta}{2+2\epsilon}+\frac{\theta}{2-\delta}=\frac{1}{2}.$ This implies that $\theta=\frac{\epsilon(2-\delta)}{2\epsilon+\delta}$.
Now, we show that the exponents of weights in the estimate above may be chosen suitably so that they satisfy the hypothesis of Theorem~\ref{radial}. Observe that by Lemma~\ref{weightrelation} we have that $\vec{w}=(|x|^{\alpha'},|x|^{\beta'})\in A_{\vec{p},\vec{r}}$ implies that  
$$|x|^{\frac{\alpha' \theta_{1}}{2+2\epsilon}} \in A_{(\frac{1-r}{r})\theta_{1}}, \quad |x|^{\frac{\beta' \theta_{2}}{2+2\epsilon}} \in A_{(\frac{1-r}{r})\theta_{2}},\quad ~\text{and}~|x|^{\frac{(\alpha'+\beta')\delta_{3}}{2+2\epsilon}}\in A_{\frac{1-r}{r}\delta_{3}},$$
where 

\begin{equation*} 
\frac{1}{\delta_{i}}=\frac{1}{r_{i}}-\frac{1}{p_{i}} \quad 
\text{ and } \quad 
\frac{1}{\theta_{i}}=\frac{1-r}{r}-\frac{1}{\delta_{i}},\quad  i=1,2,3.
\end{equation*}
Substituting values of various parameters, we obtain  that
\begin{equation*}
\frac{1}{r_{3}}=\frac{1}{t}=\frac{2(n-1)+n\epsilon}{n(2+\epsilon)},\qquad 
\frac{1}{r}=\sum^{3}_{i=1}\frac{1}{r_{i}}=\frac{4n+n\epsilon-2}{n(2+\epsilon)}
\end{equation*}
and
\begin{equation*}
\frac{1}{\theta_{i}}=\frac{n(4+3\epsilon)-4(1+\epsilon)}{2n(1+\epsilon)(2+\epsilon)}, \quad i=1,2.
\end{equation*}	
It is easy to verify that $|x|^{\frac{\alpha'\theta_{1}}{2+2\epsilon}}\in A_{(\frac{1-r}{r})\theta_{1}}$ and $|x|^{\frac{\beta' \theta_{2}}{2+2\epsilon}}\in A_{(\frac{1-r}{r})\theta_{2}}$ imply that  
$$
\frac{4(1-n)+4\epsilon-3n\epsilon}{2+\epsilon}<\alpha', \beta'<\frac{n\epsilon}{2+\epsilon}.
$$
Since $\epsilon$ can be chosen arbitrarily small, we get that $\alpha'$ and $\beta'$ satisfy $2(1-n)<\alpha',\beta'<0$. In a similar way,  $|x|^{\frac{(\alpha'+\beta')\delta_{3}}{2+2\epsilon}}\in A_{\frac{1-r}{r}\delta_{3}}$ implies that $\alpha'+\beta'>2(1-n)$. 
Notice that $\theta\rightarrow 0$ as $\epsilon\rightarrow 0$ ($\delta$ is fixed). Since the range of $\alpha'$ and $\beta'$ is an open set, we get that $\mathcal{M}_{\operatorname{lac}}$ is bounded from $L^{2}(|x|^{\alpha})\times L^{2}(|x|^{\beta})$ to $L^{1}(|x|^{\frac{\alpha+\beta}{2}})$ for $\alpha,\beta$ satisfying 
	\begin{equation*}
	2(1-n)<\alpha,\beta<0\quad \text{and} \quad \alpha+\beta>2(1-n).
	\end{equation*}
Further, using the product-type weighted boundedness of $\mathcal{M}_{\operatorname{lac}}$, we get that $\mathcal{M}_{\operatorname{lac}}$ is bounded from $L^{2}(|x|^{a})\times L^{2}(|x|^{b})\rightarrow L^{1}(|x|^{\frac{a+b}{2}})$ for $1-n\leq a,b<n-1$. 
This proves the desired result for the operator $\mathcal{M}_{\operatorname{lac}}$. 
	
The proof for $\mathcal{M}_{\operatorname{full}}$ may be completed in a similar fashion with the extra conditions that $(\frac{1}{r_{i}},\frac{1}{s_{i}})\in F_{n}$, $r_{1},r_{2}>\frac{n}{n-1}$ and $n \geq 3$. Observe that here the restriction on the dimension arises for the case of the full spherical maximal operator due to the estimate~\eqref{eqp}, where the $L^2$ boundedness of $M_{\operatorname{full}}$ is required (following from Theorem \ref{product}). Indeed, $M_{\operatorname{full}}$ is not bounded for $p\le 2$ in dimension $n=2$.
\end{proof} 
%
\section{Comparing Theorem \ref{lacweighted} with H\"{o}lder type results} 
\label{compar}
For $1<p<\infty$, define the sets 
$ \mathcal L_p:=\{ w: M_{\operatorname{lac}}\,\,\text{maps} \,\, L^p(w)~\text{to}~L^p(w)\}$ and 
$\mathcal F_p:=\{ w: M_{\operatorname{full}}\,\, \text{maps} \,\, L^p(w)~\text{to}~L^p(w)\}.$ 
Recall also the definitions of $\mathcal{R}_{p}$ and $\mathcal{\widetilde{R}}_{p}$ in \eqref{Rp} and \eqref{Rpt}, respectively. In~\cite{DV}, Duoandikoetxea and Vega proved the following weighted estimates for spherical maximal functions with respect to radial weights. 
\begin{theorem}\label{Duo-Vega}\cite{DV} 
 $\mathcal{R}_{p} \subseteq \mathcal L_p$,  $1<p<\infty$ and 
 $\mathcal{\widetilde{R}}_{p} \subseteq \mathcal F_p$, $\frac{n}{n-1}<p<\infty.$
\end{theorem}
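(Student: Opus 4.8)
The plan is to follow the Fourier-analytic scheme of Duoandikoetxea and Vega, reducing everything to a dyadic frequency decomposition of the sphere. I would first record the stationary-phase estimate $|\widehat{d\sigma_{n-1}}(\xi)|\lesssim(1+|\xi|)^{-(n-1)/2}$, with matching bounds for its derivatives coming from the Bessel function asymptotics of $\widehat{d\sigma_{n-1}}$, and decompose accordingly: fix $\widehat{\Psi}\in C^{\infty}_{c}$ supported in $\{|\xi|\le 2\}$ and equal to $1$ on $\{|\xi|\le 1\}$, and set $\widehat{\mu_{0}}=\widehat{\Psi}\,\widehat{d\sigma_{n-1}}$ and $\widehat{\mu_{j}}(\xi)=\big(\widehat{\Psi}(2^{-j}\xi)-\widehat{\Psi}(2^{1-j}\xi)\big)\widehat{d\sigma_{n-1}}(\xi)$ for $j\ge 1$, so that $d\sigma_{n-1}=\sum_{j\ge 0}\mu_{j}$ and $\mathcal{A}_{r}f=\sum_{j\ge 0}f*(\mu_{j})_{r}$ with $(\mu_{j})_{r}(y)=r^{-n}\mu_{j}(y/r)$. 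The band $j=0$ is harmless: $\sup_{r>0}|f*(\mu_{0})_{r}|\lesssim Mf$, and $Mf$ is bounded on $L^{p}(|x|^{b})$ for every $-n<b<n(p-1)$, hence on all of $\mathcal{R}_{p}$ and all of $\widetilde{\mathcal{R}}_{p}$. Since $M_{\operatorname{lac}}f\le\sum_{j\ge 0}M^{j}f$ and $M_{\operatorname{full}}f\le\sum_{j\ge 0}M^{j}_{\operatorname{full}}f$ (the maximal operators attached to the bands $\mu_{j}$), it remains to bound the $j\ge 1$ pieces on $L^{p}(|x|^{b})$ with a gain $2^{-\delta j}$, $\delta=\delta(p,b)>0$, and to sum the geometric series.

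For the lacunary operator, put $M^{j}f:=\sup_{k\in\Z}|f*(\mu_{j})_{2^{k}}|$. I would combine two single-band estimates. First, a \emph{weighted $L^{2}$} bound $\|M^{j}f\|_{L^{2}(|x|^{a})}\lesssim 2^{-j(n-1)/2}\,2^{C(a)j}\|f\|_{L^{2}(|x|^{a})}$, valid for $a$ in an open interval about $0$, obtained from Plancherel by splitting $\R^{n}$ according to whether $|x|\approx 2^{k}$---where the shell carrying $(\mu_{j})_{2^{k}}$ sits, $|x|^{a}$ is essentially constant, and the unweighted bound $\|M^{j}f\|_{L^{2}}\lesssim 2^{-j(n-1)/2}\|f\|_{L^{2}}$ applies (itself a consequence of $\sup_{k}|f*(\mu_{j})_{2^{k}}|\le(\sum_{k}|f*(\mu_{j})_{2^{k}}|^{2})^{1/2}$ and almost orthogonality)---or not, where the rapid off-shell decay of $(\mu_{j})_{2^{k}}$ controls the weight. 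Second, an \emph{unweighted $L^{q}$} bound $\|M^{j}f\|_{L^{q}(\R^{n})}\lesssim 2^{\gamma(q)j}\|f\|_{L^{q}(\R^{n})}$ with at most polynomial loss $\gamma(q)$, for $q$ in the $L^{q}$-boundedness range of the maximal operator (here all $q>1$), by dominating each $f*(\mu_{j})_{2^{k}}$ pointwise by an average over a thin annulus. Interpolating these families by the Stein--Weiss interpolation theorem with change of measures gives $\|M^{j}f\|_{L^{p}(|x|^{b})}\lesssim 2^{-\delta j}\|f\|_{L^{p}(|x|^{b})}$ with $\delta(p,b)>0$ for every $b$ in the \emph{open} range $1-n<b<(n-1)(p-1)$; summing in $j$ and adding the $j=0$ term proves $M_{\operatorname{lac}}$ bounded on $L^{p}(|x|^{b})$ for such $b$. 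The closed endpoint $b=1-n$ of $\mathcal{R}_{p}$ is not reached this way and must be treated separately: there $|x|^{1-n}\in A_{1}$, and I would obtain it either by a limiting argument from the interior estimates or by passing to polar coordinates and using a Mellin transform in the radial variable to reduce it to a one-dimensional maximal inequality.

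For the full operator, on each band one linearizes the continuous supremum. Writing $\mathcal{A}^{j}_{r}f:=f*(\mu_{j})_{r}$ and using that $\mathcal{A}^{j}_{r}f(x)\to 0$ as $r\to\infty$ for compactly supported $f$, one has $\sup_{r>0}|\mathcal{A}^{j}_{r}f(x)|^{2}\le 2\int_{0}^{\infty}|\mathcal{A}^{j}_{r}f(x)|\,|r\partial_{r}\mathcal{A}^{j}_{r}f(x)|\,dr/r$, so Cauchy--Schwarz reduces $M^{j}_{\operatorname{full}}f:=\sup_{r>0}|\mathcal{A}^{j}_{r}f|$ to the $g$-functions $\big(\int_{0}^{\infty}|\mathcal{A}^{j}_{r}f|^{2}\,dr/r\big)^{1/2}$ and $\big(\int_{0}^{\infty}|r\partial_{r}\mathcal{A}^{j}_{r}f|^{2}\,dr/r\big)^{1/2}$. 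Since $r\partial_{r}$ acts on the frequency side as multiplication by $(r\xi)\cdot\nabla$ and $\widehat{\mu_{j}}$ is a bump at scale $2^{j}$, the kernel of $r\partial_{r}\mathcal{A}^{j}_{r}$ has the same size and support as that of $\mathcal{A}^{j}_{r}$; hence these $g$-functions obey the same weighted $L^{2}$ and unweighted $L^{q}$ bounds as $M^{j}$, and they are treated identically. The only new feature is that the unweighted $L^{q}$ bound surviving interpolation against the $2^{-j(n-1)/2}$ decay is available precisely for $q$ in the $L^{q}$-boundedness range of the full spherical maximal operator, namely $q>n/(n-1)$, which forces $n\ge 3$ (for $n=2$, $M_{\operatorname{full}}$ is not $L^{q}$-bounded for $q\le 2$); the loss incurred there is exactly what shrinks the admissible upper bound for $b$ from $(n-1)(p-1)$ to $(n-1)(p-1)-1$, the lower endpoint being accordingly open. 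Summing the bands then gives $\widetilde{\mathcal{R}}_{p}\subseteq\mathcal{F}_{p}$.

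The step I expect to be the main obstacle is the interpolation bookkeeping: at each interpolation between the weighted $L^{2}$ and unweighted $L^{q}$ bounds one must identify the sharp admissible interval of powers $b$, so that the union of these intervals is exactly the open range $1-n<b<(n-1)(p-1)$ (respectively all of $\widetilde{\mathcal{R}}_{p}$) while the geometric decay in $j$ is preserved; and, in the lacunary case, one must separately capture the closed left endpoint $b=1-n$, which is outside the reach of interpolation. The estimates for the interior of the range are then a routine summation of geometric series.
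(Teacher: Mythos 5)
The paper does not actually prove this statement: it is imported verbatim from Duoandikoetxea--Vega \cite{DV}, so your sketch can only be measured against the argument of that reference. For the lacunary half your outline does follow their scheme --- dyadic frequency decomposition of $d\sigma_{n-1}$, a single-band weighted $L^{2}$ estimate with geometric decay, an unweighted $L^{q}$ estimate with at most polynomial loss, and Stein--Weiss interpolation with change of measures --- and you are right to flag the closed endpoint $b=1-n$ as lying outside the reach of that interpolation. (Your proposed mechanism for the weighted $L^{2}$ single-band bound is, however, not correct as stated: the weight $|x|^{a}$ is evaluated at the output variable, and for $|x|\ll 2^{k}$ the sphere $\{|y-x|=2^{k}\}$ meets points with $|y|\sim 2^{k}$, so the weight is very far from ``essentially constant'' between input and output; this is exactly the difficulty that \cite{DV} resolve on the Fourier side, and it cannot be waved through.)

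The genuine error is in your treatment of $\mathcal M_{\operatorname{full}}$. You claim that the $g$-function built from $r\partial_{r}\mathcal A^{j}_{r}$ obeys \emph{the same} bounds as the one built from $\mathcal A^{j}_{r}$ because $r\partial_{r}$ acts on the Fourier side as $(r\xi)\cdot\nabla$. But $\widehat{\mu_{j}}$ is supported where $|r\xi|\sim 2^{j}$ and $|\nabla\widehat{d\sigma_{n-1}}(\zeta)|\lesssim|\zeta|^{-(n-1)/2}$, so $|(r\xi)\cdot\nabla\widehat{\mu_{j}}(r\xi)|\lesssim 2^{j}\cdot 2^{-j(n-1)/2}$: the derivative costs a full factor $2^{j}$, and after Cauchy--Schwarz the single-band full maximal operator decays on $L^{2}$ only like $2^{-j(n-2)/2}$, not $2^{-j(n-1)/2}$. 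Were your claim true, summing the bands would make $M_{\operatorname{full}}$ bounded on $L^{2}(\R^{2})$, which is false. This is not a cosmetic point for the theorem at hand: at $p=2$ no interpolation with unweighted $L^{q}$ bounds takes place, yet the admissible ranges $1-n\le b<n-1$ (lacunary) and $1-n<b<n-2$ (full) already differ by $1$; that difference is produced precisely by the weaker decay $2^{-j(n-2)/2}$, and not, as you assert, by the restriction $q>n/(n-1)$ on the unweighted input. With your bookkeeping the full operator would inherit the lacunary weight range, which is too large; the derivative loss must be tracked through both the weighted $L^{2}$ estimate and the subsequent interpolation to recover the correct class $\widetilde{\mathcal R}_{p}$.
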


Recently, in~\cite{Lacey} Lacey proved the weighted estimates for the operators with respect to general weights using sparse domination principle. 
\begin{theorem}
\label{lacey}\cite{Lacey} The following estimates hold. 
\begin{itemize}
\item Let $1<r<p<\phi_{\operatorname{lac}}'(\frac{1}{r})$, then $A_{\frac{p}{r}} \cap \operatorname{RH}_{\big(\frac{\phi_{\operatorname{lac}}'(\frac{1}{r})}{p}\big)'}\subseteq \mathcal L_p.$
\item Let $\frac{n}{n-1}<r<p<\phi_{\operatorname{full}}'(\frac{1}{r})$, then $A_{\frac{p}{r}}\cap \operatorname{RH}_{\big(\frac{\phi'_{\operatorname{full}}(\frac{1}{r})}{p}\big)'}\subseteq \mathcal F_p.$
\end{itemize}
\end{theorem}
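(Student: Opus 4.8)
The plan is to derive Theorem~\ref{lacey} from a \emph{linear} sparse domination principle for the two spherical maximal operators, the single-function counterpart of Theorem~\ref{mainthm1:lac}, and then invoke the standard mechanism that converts sparse bounds into weighted inequalities. Thus I would first establish: for every $(\tfrac1r,\tfrac1s)$ in the interior of $L_n$ (resp.\ $F_n$) and all non-negative bounded compactly supported $f,g$, there is a sparse family $\mathcal S=\mathcal S(f,g)$ with $\langle M_{\operatorname{lac}}f,g\rangle\lesssim\sum_{S\in\mathcal S}|S|\langle f\rangle_{S,r}\langle g\rangle_{S,s}$ (resp.\ the same with $M_{\operatorname{full}}$). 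Since $\mathcal A_t f(x)=0$ once $t$ exceeds the diameter of any cube containing $x$ together with $\operatorname{supp}f$, a routine decomposition of $\R^n$ into dyadic cubes reduces the claim to $f,g$ supported in a common dyadic cube $Q_0$, in which case only scales $t\lesssim\ell(Q_0)$ contribute.

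On such a $Q_0$ I would run the familiar recursive stopping-time scheme. Let $\mathcal C$ be the family of maximal dyadic cubes $Q\subsetneq Q_0$ with $\langle f\rangle_{Q,r}>A\langle f\rangle_{Q_0,r}$ or $\langle g\rangle_{Q,s}>A\langle g\rangle_{Q_0,s}$; for $A$ large, $\sum_{Q\in\mathcal C}|Q|\le\tfrac12|Q_0|$, so $Q_0$ qualifies as a sparse cube. On the good set $Q_0\setminus\bigcup\mathcal C$ the relevant spherical averages are controlled by $\langle f\rangle_{Q_0,r}$ via the improving estimate below; pairing with $g$ by H\"older on $Q_0$ produces the term $|Q_0|\langle f\rangle_{Q_0,r}\langle g\rangle_{Q_0,s}$, and one recurses into each $Q\in\mathcal C$, replacing $f$ by its restriction to a fixed dilate of $Q$ to absorb the averages straddling $\partial Q$; summing over the recursion tree gives the sparse form. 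The analytic input is a sharp single-scale $L^r\to L^s$ improving estimate: for $M_{\operatorname{lac}}$ it suffices that the single spherical average $\mathcal A_1\colon L^r(\R^n)\to L^s(\R^n)$ be bounded on the Littman--Strichartz triangle, whose image under $(\tfrac1r,\tfrac1s)\mapsto(\tfrac1r,1-\tfrac1s)$ is exactly $L_n$, and this is stationary phase together with the decay of $\widehat{d\sigma}$; for $M_{\operatorname{full}}$ one instead needs the localized maximal operator $\sup_{1\le t\le2}|\mathcal A_t f|$ to be $L^r\to L^s$ bounded on the region whose reflection is $F_n$, which is the $L^p$-improving/local-smoothing theory of Schlag and Schlag--Sogge (and in dimension $n=2$ is intertwined with Bourgain's circular maximal theorem). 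To upgrade the single-scale bound to one uniform over the full range of scales in $Q_0$ without a logarithmic loss, I would split $f=\sum_k f_k$ by spatial scale and use that, because $(\tfrac1r,\tfrac1s)$ is taken in the \emph{open} region, the improving estimate self-improves to a geometric gain $\|\sup_{t\sim2^k}|\mathcal A_t f_j|\|_{L^s}\lesssim 2^{-\varepsilon|j-k|}\|f_j\|_{L^r}$, so the sum over scales converges. This scale-summation, and the derivation of the sharp trapezium $F_n$ for the full operator, is what I expect to be the main obstacle; the rest is bookkeeping.

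Granting the sparse principle, the weighted statements are soft. Fix $p$ with $r<p<\phi_{\operatorname{lac}}'(\tfrac1r)$ and $w\in A_{p/r}\cap\operatorname{RH}_{(\phi_{\operatorname{lac}}'(\tfrac1r)/p)'}$. By the self-improvement of reverse H\"older classes there is $\sigma$ with $p<\sigma'<\phi_{\operatorname{lac}}'(\tfrac1r)$ and $w\in A_{p/r}\cap\operatorname{RH}_{(\sigma'/p)'}$; since $\sigma'>p>r$ one checks directly that $(\tfrac1r,\tfrac1\sigma)$ lies in the interior of $L_n$, so the sparse bound applies with $s=\sigma$. By duality, $\|M_{\operatorname{lac}}f\|_{L^p(w)}\simeq\sup\bigl\{|\langle M_{\operatorname{lac}}f,h\rangle|:\|h\|_{L^{p'}(w^{1-p'})}=1\bigr\}\lesssim\sup_{\mathcal S}\sum_{S}|S|\langle f\rangle_{S,r}\langle h\rangle_{S,\sigma}$, and the $(r,\sigma)$-sparse form on the right is bounded by $\|f\|_{L^p(w)}\|h\|_{L^{p'}(w^{1-p'})}$ precisely under $r\le p$, $\sigma\le p'$ and $w\in A_{p/r}\cap\operatorname{RH}_{(\sigma'/p)'}$, which is the standard weighted bound for $(p_1,p_2)$-sparse forms, and all these conditions are met. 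Hence $M_{\operatorname{lac}}\colon L^p(w)\to L^p(w)$, that is, $A_{p/r}\cap\operatorname{RH}_{(\phi_{\operatorname{lac}}'(\tfrac1r)/p)'}\subseteq\mathcal L_p$. The full case is identical, with $L_n,\phi_{\operatorname{lac}},\mathcal L_p$ replaced by $F_n,\phi_{\operatorname{full}},\mathcal F_p$; the constraint $r>\tfrac{n}{n-1}$ is precisely what places $(\tfrac1r,\tfrac1s)$ in $F_n$, reflecting the unweighted $L^p$-boundedness range of $M_{\operatorname{full}}$.
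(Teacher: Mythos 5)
Your proposal is correct and is essentially the argument of the cited source: the paper states Theorem~\ref{lacey} as a quotation from \cite{Lacey} without proof, and \cite{Lacey} proves it exactly as you describe, via a linear $(r,s)$-sparse domination of $M_{\operatorname{lac}}$ and $M_{\operatorname{full}}$ on the interiors of $L_n$ and $F_n$ (built from the single-scale $L^r\to L^s$ improving estimates and a stopping-time recursion) followed by the standard weighted bound for $(r,\sigma)$-sparse forms under $w\in A_{p/r}\cap\operatorname{RH}_{(\sigma'/p)'}$ together with the self-improvement of the reverse H\"older class. This is also precisely the linear template that Section~\ref{sec:proof} of the paper adapts to the bilinear setting.
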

For $\vec p=(p_1, p_2,p)$ with $\frac{1}{p}=\frac{1}{p_1}+\frac{1}{p_2}$, define 
$$
\mathcal{L}_{\vec{p}}:=\big\{ \vec{w}=({w}_1, {w}_2): \mathcal M_{\operatorname{lac}}~\text{ maps }~L^{p_1}(w_1)\times L^{p_2}(w_2)~\text{to}~ L^{p}(w)\big\} 
$$ 
and
$$
\mathcal{F}_{\vec{p}}:=\big\{ \vec{w}=({w}_1, {w}_2): \mathcal M_{\operatorname{full}}~\text{ maps }~L^{p_1}(w_1)\times L^{p_2}(w_2)~\text{to}~ L^{p}(w)\big\}.
$$ 
In view of Theorems~\ref{Duo-Vega} and \ref{lacey}, H\"{o}lder's inequality yields the following weighted estimates for bilinear spherical maximal functions with respect to product type bilinear weights. 
\begin{theorem} \label{product}
The following holds:
\begin{itemize}
\item $\prod_{i=1}^2 A_{\frac{p_i}{r_i}}\cap \operatorname{RH}_{\big(\frac{\phi_{\operatorname{lac}}'(\frac{1}{r_i})}{p_i}\big)'}\subseteq \mathcal{L}_{\vec{p}}$ for all $1<r_i<p_i<\phi_{\operatorname{lac}}'(\frac{1}{r_i})$ and $\prod_{i=1}^2 \mathcal{R}_{p_i} \subseteq \mathcal{L}_{\vec{p}}, $ where $p_i>1$, $i=1,2.$
\item $\prod_{i=1}^2 A_{\frac{p_i}{r_i}}\cap \operatorname{RH}_{\big(\frac{\phi_{\operatorname{full}}'(\frac{1}{r_i})}{p_i}\big)'}\subseteq \mathcal{F}_{\vec{p}}$ for all $\frac{n}{n-1}<r_i<p_i<\phi_{\operatorname{full}}'(\frac{1}{r_i})$ and $\prod_{i=1}^2 \mathcal{\widetilde{R}}_{p_i} \subseteq \mathcal{F}_{\vec{p}}, $ where $p_i>\frac{n}{n-1}$, $i=1,2.$
\end{itemize} 
\end{theorem}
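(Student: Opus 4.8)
The plan is to derive Theorem~\ref{product} from the linear weighted bounds of Theorems~\ref{Duo-Vega} and~\ref{lacey}, by combining the pointwise domination of the bilinear operators by a product of linear spherical maximal functions with H\"older's inequality, exactly as sketched right before the statement. I will write out only the lacunary case, since the full case is identical. The starting point is the pointwise bound: for each fixed $t$ one has $|\mathcal A_t f_i(x)|\le M_{\operatorname{lac}}(|f_i|)(x)$, hence $|\mathcal A_t f_1(x)\,\mathcal A_t f_2(x)|\le M_{\operatorname{lac}}(f_1)(x)\,M_{\operatorname{lac}}(f_2)(x)$, and taking the supremum over dyadic $t$ gives
$$
\mathcal M_{\operatorname{lac}}(f_1,f_2)(x)\le M_{\operatorname{lac}}(f_1)(x)\,M_{\operatorname{lac}}(f_2)(x),\qquad x\in\R^n,
$$
with the analogous statement for $\mathcal M_{\operatorname{full}}$ and $M_{\operatorname{full}}$ (supremum over all $t>0$).

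Next I would apply H\"older's inequality. Let $\vec{p}=(p_1,p_2,p)$ satisfy \eqref{holder}; then $p_1/p$ and $p_2/p$ are conjugate exponents (and both are $\ge1$ since $p\le p_i$). With $w=w_1^{p/p_1}w_2^{p/p_2}$ and the pointwise bound above,
\begin{align*}
\|\mathcal M_{\operatorname{lac}}(f_1,f_2)\|_{L^p(w)}^p
&\le \int_{\R^n} M_{\operatorname{lac}}(f_1)^{p}\,M_{\operatorname{lac}}(f_2)^{p}\,w_1^{p/p_1}\,w_2^{p/p_2}\,dx\\
&\le \Big(\int_{\R^n} M_{\operatorname{lac}}(f_1)^{p_1}\,w_1\,dx\Big)^{p/p_1}\Big(\int_{\R^n} M_{\operatorname{lac}}(f_2)^{p_2}\,w_2\,dx\Big)^{p/p_2},
\end{align*}
that is, $\|\mathcal M_{\operatorname{lac}}(f_1,f_2)\|_{L^p(w)}\le\prod_{i=1}^2\|M_{\operatorname{lac}}(f_i)\|_{L^{p_i}(w_i)}$, and similarly for $\mathcal M_{\operatorname{full}}$.

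Finally I would invoke the linear theory to bound each factor. If $w_i\in A_{p_i/r_i}\cap\operatorname{RH}_{(\phi_{\operatorname{lac}}'(1/r_i)/p_i)'}$ with $1<r_i<p_i<\phi_{\operatorname{lac}}'(1/r_i)$, then Theorem~\ref{lacey} gives $w_i\in\mathcal L_{p_i}$, i.e.\ $\|M_{\operatorname{lac}}(f_i)\|_{L^{p_i}(w_i)}\lesssim\|f_i\|_{L^{p_i}(w_i)}$; if instead $w_i\in\mathcal R_{p_i}$, the same follows from Theorem~\ref{Duo-Vega}. In either situation, combining with the previous display yields $\vec{w}\in\mathcal L_{\vec{p}}$, which is the first bullet. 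The second bullet follows the same way, replacing $M_{\operatorname{lac}}$ by $M_{\operatorname{full}}$, $\phi_{\operatorname{lac}}'$ by $\phi_{\operatorname{full}}'$, $\mathcal R_{p_i}$ by $\mathcal{\widetilde{R}}_{p_i}$, and requiring $p_i>\frac{n}{n-1}$ so that the full-version parts of Theorems~\ref{Duo-Vega} and~\ref{lacey} apply. There is essentially no obstacle in this argument: the single point worth checking is that $p_1/p$ and $p_2/p$ are conjugate, which is precisely \eqref{holder}; everything else is a direct citation of the already-stated linear results.
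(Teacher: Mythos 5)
Your proof is correct and is exactly the argument the paper intends: pointwise domination of $\mathcal M$ by the product of linear spherical maximal functions, H\"older's inequality with the conjugate exponents $p_1/p$ and $p_2/p$, and then the linear weighted bounds of Theorems~\ref{Duo-Vega} and~\ref{lacey}. The paper only states this in one sentence, so your write-up simply fills in the same routine details.
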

In this section we show that Theorem~\ref{lacweighted} addresses the weighted boundedness of bilinear operators $\mathcal M_{\operatorname{lac}}$ and $\mathcal M_{\operatorname{full}}$ with respect to bilinear weights that are not of product type as covered by Theorem~\ref{product} above. 

\subsection{The case of lacunary spherical maximal operator $\mathcal{M}
_{\operatorname{lac}}$.} 

Let $n\geq 2$.  Consider $\vec{p}=(p_{1},p_{2},p)$ and $\vec{r}=(r_{1},r_{2},t)$, where $p_1=p_2=n+\delta$, $\delta>1$  and $r_{1}=r_{2}=2+\epsilon$, $\epsilon>0$, $t=\frac{s_{1}s_{2}}{s_{1}+s_{2}-s_{1}s_{2}}$ for $(\frac{1}{r_{i}},\frac{1}{s_{i}})\in L_{n}$, $i=1,2$. Note that $p=\frac{n+\delta}{2}$ and $p_{3}=p'=\frac{n+\delta}{n+\delta-2}$. 
With this choice of exponents,  let $\vec{w}=(w_{1},w_{2})=(|x|^{a},|x|^{b}) \in A_{\vec{p},\vec{r}}.$ Note that  Theorem~\ref{lacweighted} is applicable for the bilinear weight $\vec{w}$. Moreover, the condition $\vec{r}\leq \vec{p}$ implies that $\epsilon<n-2+\delta$ and $\frac{\delta-n}{n}<\epsilon$. Therefore,  for the validity of Theorem~\ref{lacweighted}, we see that $\epsilon$ can vary between $\max\{0,\frac{\delta-n}{n}\}$ and $n-2+\delta.$ 
Next, invoking Lemma~\ref{weightrelation} we know that $\vec{w} \in A_{\vec{p},\vec{r}}$ if, and only if $w^{\frac{\theta_{i}}{p_{i}}}_{i}\in A_{\frac{1-r}{r}\theta_{i}}$, $i=1,2$ and $w^{\frac{\delta_{3}}{p}}\in A_{\frac{1-r}{r}\delta_{3}}$, where 
\begin{equation*}
\frac{1}{r}=\sum_{i=1}^3\frac{1}{r_{i}}, \quad 
\frac{1}{\theta_{i}}=\frac{1-r}{r}-\frac{1}{\delta_{i}},\quad 
\frac{1}{\delta_{i}}=\frac{1}{r_{i}}-\frac{1}{p_{i}},\quad ~\text{for }i=1,2,3~ \text{with}~r_3=t.
\end{equation*}
Substituting the values of various parameters we get $\theta_{1}=\theta_{2}=\frac{n(n+\delta)(2+\epsilon)}{(n-2)(n+\delta)+n(2+\epsilon)}$. Moreover, $w^{\frac{\theta_{1}}{p_{1}}}_{1}\in A_{\frac{1-r}{r}\theta_{1}}$ gives us the possible range of exponent $a$, which is 
\begin{equation}\label{eq50}
a\in\Big(-n-\frac{(n-2)(n+\delta)}{2+\epsilon},\frac{n(n+\delta-2-\epsilon)}{2+\epsilon}\Big).
\end{equation}
Next, we shall compute the possible range of exponents for product type bilinear weights for the triplet $\vec{p}$ and compare it with the range of $a$ given above. 
	
For, let $\vec{p}$ be as above and take $t_{1}=t_{2}=n+\delta-\alpha$ for some $\alpha>0$.
Let $(|x|^{a},|x|^{b})\in \prod_{i=1}^2 A_{\frac{p_{i}}{t_{i}}}\cap \operatorname{RH}_{\big(\frac{\phi'_{\operatorname{lac}}(\frac{1}{t_{i}})}{p_{i}}\big)'}$ be a product type bilinear weight. We discuss the two cases depending on the definition of the function $\phi_{\operatorname{lac}}$ in \eqref{philac} separately. 

\textbf{Case 1}: If $\frac{1}{\phi_{\operatorname{lac}}(\frac{1}{t_{i}})}=1-\frac{1}{nt_{i}}$, then $\phi_{\operatorname{lac}}'(\frac{1}{t_{i}})=nt_{i}=n(n+\delta-\alpha)$. Note that $\phi_{\operatorname{lac}}'(\frac{1}{t_{i}})>n+\delta$ which in turn implies that  $\alpha<\frac{(n-1)(n+\delta)}{n}.$
We know that 	
\begin{align}
 |x|^{a}\in A_{\frac{n+\delta}{n+\delta-\alpha}}\cap \operatorname{RH}_{\big(\frac{\phi'_{\operatorname{lac}}(\frac{1}{n+\delta-\alpha})}{n+\delta}\big)'}
\nonumber &\Leftrightarrow  |x|^{a}\in A_{\frac{n+\delta}{n+\delta-\alpha}}\cap \operatorname{RH}_{\frac{n(n+\delta-\alpha)}{n(n+\delta-\alpha)-(n+\delta)}}\\
\nonumber &\Leftrightarrow |x|^{\frac{an(n+\delta-\alpha)}{n(n+\delta-\alpha)-(n+\delta)}}\in A_{\frac{n(n+\delta-\alpha)}{n(n+\delta-\alpha)-(n+\delta)}(\frac{n+\delta}{n+\delta-\alpha}-1)+1}\\
\label{eq51}&\Leftrightarrow  a\in\Big(-n+\frac{n+\delta}{n+\delta-\alpha},\frac{n\alpha}{n+\delta-\alpha}\Big).
\end{align}
Comparing the estimate (\ref{eq51}) with the range of exponent $a$ given by (\ref{eq50}), we see that (\ref{eq50}) allows values of $a$ which are not possible in the product type weights. 

\textbf{
Case 2}: Since $p_1=p_2>n+1,$ we claim that the value $\frac{1}{\phi_{\operatorname{lac}}(\frac{1}{t_{i}})}=n(1-\frac{1}{t_{i}})$ is not possible. For,  take $\frac{1}{\phi_{\operatorname{lac}}(\frac{1}{t_{i}})}=n(1-\frac{1}{t_{i}})$, then it would imply that $\frac{n}{n+1}<\frac{1}{t_{i}}=\frac{1}{n+\delta-\alpha}$. From here we get that $\alpha>\frac{n^{2}+n\delta-n-1}{n}.$ 
On the other hand the condition $\phi'_{\operatorname{lac}}(\frac{1}{n+\delta-\alpha})>n+\delta$ implies that 
\begin{equation*}
\frac{n+\delta-\alpha}{n-(n+\delta-\alpha)(n-1)}>n+\delta.
\end{equation*}
Hence
\begin{equation*}
 \frac{(n-1)(n+\delta)(n+\delta-1)}{(n-1)(n+\delta)+1} >\alpha.
\end{equation*}
It is easy to verify that for $\delta>1$ the estimate above contradicts the earlier estimate $\alpha>\frac{n^{2}+n\delta-n-1}{n}.$ 
This establishes the claim that Theorem~\ref{lacweighted} provides weighted boundedness of the bilinear lacunary spherical maximal operator $\mathcal{M}_{\operatorname{lac}}$ for bilinear weights which are not product of weights arising from Theorem~\ref{lacey}. 

We also need to verify the claim against the product type weights arising from Theorem~\ref{Duo-Vega}. However, it is easier to verify this claim as we know that $(|x|^{a},|x|^{b})\in\mathcal{R}_{n+\delta}\times\mathcal{R}_{n+\delta}$ would imply that $a,b\in [1-n,(n-1)(n+\delta-1)\big)$. Comparing this with the range of exponent $a$ given by (\ref{eq50}) proves the claim. 
\qed

Next, we discuss the case of the bilinear full spherical maximal operator $\mathcal{M}_{\operatorname{full}}$. It is similar to the previous case and hence we skip details. 

\subsection{The case of full spherical maximal operator $\mathcal{M}_{\operatorname{full}}$.} 

We consider the same setting as in the previous section until the estimate~(\ref{eq50}) with $n\geq 3$. Notice that here we would require that $(\frac{1}{r_{i}},\frac{1}{s_{i}})\in F_{n}$, $i=1,2$. 

Now we compute the range of exponents when $(|x|^{a},|x|^{b})\in \prod_{i=1}^2 A_{\frac{p_{i}}{t_{i}}}\cap \operatorname{RH}_{\big(\frac{\phi'_{\operatorname{full}}(\frac{1}{t_{i}})}{p_{i}}\big)'}$. As earlier we consider two cases separately depending on the function $\phi_{\operatorname{full}}$. 

\textbf{
Case 1}: If  $\frac{1}{\phi_{\operatorname{full}}(\frac{1}{t_{i}})}=1-\frac{1}{nt_{i}}$, then $\phi_{\operatorname{full}}'(\frac{1}{t_{i}})=nt_{i}=n(n+\delta-\alpha)$. A similar computation as in the previous section gives us that $\alpha<\frac{(n-1)(n+\delta)}{n}$
and that $a$ varies over the range $ a\in\big(-n+\frac{n+\delta}{n+\delta-\alpha},\frac{n\alpha}{n+\delta-\alpha}\big).$	Comparing this with (\ref{eq50}) we see that the possible range of exponent $a$ in product type weights does not exhaust all the values of $a$ given by estimate~\eqref{eq50}. 

\textbf{
Case 2}:  Let $\frac{1}{\phi_{\operatorname{full}}(\frac{1}{n+\delta-\alpha})}=2-\frac{n+1}{(n-1)(n+\delta-\alpha)}.$ We show that for $\delta>\frac{n+1}{n-1}$ this choice of $\phi_{\operatorname{full}}$ is not possible. 
Observe that we have $\frac{n^{2}-n}{n^{2}+1}<\frac{1}{n+\delta-\alpha}$ which gives us $ \alpha>\frac{(n^{2}-n)(n+\delta)-(n^{2}+1)}{n^{2}-n}.$ Also,   
\begin{equation*}
\phi'_{\operatorname{full}}\Big(\frac{1}{n+\delta-\alpha}\Big)=\frac{(n-1)(n+\delta-\alpha)}{(n+1)-(n-1)(n+\delta-\alpha)}>n+\delta.
\end{equation*}
This yields another estimate on $\alpha$, i.e., $\alpha<\frac{(n+\delta)[(n+\delta)(n-1)-2]}{(n+\delta+1)(n-1)}.$ Since $\delta>\frac{n+1}{n-1}$ this contradicts the earlier estimate on $\alpha.$ This proves the claim.

Next, if $0<\delta\leq \frac{n+1}{n-1}$, then computing the range of $a$ keeping in mind the estimate on $\phi'_{\operatorname{full}}$, we get that 
\begin{align*}
|x|^{a} \in A_{\frac{n+\delta}{n+\delta-\alpha}}\cap \operatorname{RH}_{\big(\frac{\phi'_{\operatorname{full}}(\frac{1}{n+\delta-\alpha})}{n+\delta}\big)'}
&\Leftrightarrow  -n<a\gamma<\frac{n\gamma\alpha}{n+\delta-\alpha}\\
&\Leftrightarrow  a\in\Big(\frac{-n}{\gamma},\frac{n\alpha}{n+\delta-\alpha}\Big),
\end{align*}
where $\gamma=\frac{(n-1)(n+\delta-\alpha)}{(n-1)(n+\delta-\alpha)-(n+1)(n+\delta)+(n-1)(n+\delta)(n+\delta-\alpha)}.$

Since $\alpha>\frac{(n^{2}-n)(n+\delta)-(n^{2}+1)}{n^{2}-n}$, it is easy to verify that $\gamma>1$. This further implies that the possible range of exponent $a$ in product type weights does not exhaust all the values of $a$ given by the range (\ref{eq50}). 

Finally, the case of $(|x|^{a},|x|^{b})\in \widetilde{\mathcal{R}}_{n+\delta}\times \widetilde{\mathcal{R}}_{n+\delta}$ can be deduced in a similar fashion as in the previous section. This implies that $a,b\in(1-n,(n-1)(n+\delta-1)-1\big)$. Therefore, by comparing it with (\ref{eq50}) we get the assertion.  

\section{Sparse domination: proof of Theorem \ref{mainthm1:lac}}
\label{sec:proof}

In this section we prove Theorem \ref{mainthm1:lac}. In order to prove our results we have exploited the corresponding ideas for the linear case from~\cite{Lacey}. As announced, we will proceed in two steps, proving first a stronger version for characteristic function and later the result for general functions. We follow a unified approach, stating as simultaneously as possible the results for both $\mathcal{M}_{\operatorname{lac}}$ and $\mathcal{M}_{\operatorname{full}}$.

\begin{theorem}\label{mainthm2:lac}
Let $n\geq 2$. For  $i=1,2$, let $(\frac{1}{r_{i}},\frac{1}{s_{i}})$ be in the interior of the triangle $L_n$ (respectively the trapezium $F_n$). Then for characteristic functions $f_{1}=\chi_{F_{1}}$, $f_{2}=\chi_{F_{2}}$ and compactly supported bounded function $h$, where $F_{1},F_{2}$ are bounded measurable subsets of $\R^n$, there exists a sparse collection $\mathcal{S}=\mathcal{S}_{r_{1},r_{2},t}$ such that 
\begin{equation*}\langle \mathcal{M}(f_{1},f_{2}),h\rangle\leq C \Lambda_{\mathcal{S}_{r_{1},r_{2},t}}(f_{1},f_{2},h),
\end{equation*}
where $t=\frac{s_{1}s_{2}}{s_{1}+s_{2}-s_{1}s_{2}}>1$ and $\mathcal{M}:=\mathcal{M}_{\operatorname{lac}}$ (respectively  $\mathcal{M}_{\operatorname{full}}$).
\end{theorem}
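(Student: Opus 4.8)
The plan is to follow Lacey's dyadic Calder\'on--Zygmund-type stopping time argument, adapted to the bilinear product structure, and to reduce everything to $L^p$-improving estimates for the single-scale averaging operators $\mathcal{A}_t$ (for the full case) or $\mathcal{A}_{2^j}$ (for the lacunary case). First I would localize: fix a large cube $Q_0$ containing (a fattened neighbourhood of) $\supp f_1\cup\supp f_2\cup\supp h$, and observe that it suffices to build the sparse family inside a dyadic grid adapted to $Q_0$, since the averages at scale $t$ only see the data in a $t$-neighbourhood of a point. The key analytic input is the $L^{r_i}\to L^{s_i}$ improving bound for $\mathcal{A}_t$ (localized to an annulus of the relevant scale), valid precisely because $(\frac1{r_i},\frac1{s_i})$ lies in the interior of $L_n$ (lacunary) or $F_n$ (full); these improving regions are exactly the ranges coming from the Fourier decay of $d\sigma_{n-1}$ together with the Sobolev/local-smoothing estimates, and I would quote them as known (or prove the lacunary one by the standard Littlewood--Paley splitting $\mathcal{A}_{2^j}=\sum_k \mathcal{A}_{2^j}\ast\check\psi_k$ with $\|\mathcal{A}_{2^j}\ast\check\psi_k\|_{L^1\to L^\infty}\lesssim 2^{-k(n-1)}2^{kn}$-type bounds).

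The main recursion goes as follows. Given a dyadic cube $Q$ from the grid (starting with $Q=Q_0$), one estimates the part of $\langle\mathcal{M}(f_1,f_2),h\rangle$ coming from $x\in Q$ where the supremum defining $\mathcal{M}$ is attained at a scale $t\sim \ell(Q)$ or larger. For such $x$, both $\mathcal{A}_t f_i(x)$ are controlled, via the $L^{r_i}\to L^{s_i}$ improving estimate on the dilate of $Q$, by $\langle f_i\rangle_{\widehat Q, r_i}$ where $\widehat Q$ is a fixed dilate of $Q$; multiplying the two and pairing against $h$ on $Q$, H\"older with exponents $(s_1,s_2,t)$ — note $\frac1{s_1}+\frac1{s_2}+\frac1{t}=\frac1{s_1}+\frac1{s_2}+(1-\frac1{s_1}-\frac1{s_2})=1$ by the definition of $t$ — yields a single term $\lesssim |Q|\langle f_1\rangle_{\widehat Q,r_1}\langle f_2\rangle_{\widehat Q,r_2}\langle h\rangle_{\widehat Q,t'}$. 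Wait: one must be slightly careful, pairing $\mathcal{A}_{t}f_1\cdot\mathcal{A}_t f_2$ (which lies in $L^{\frac{s_1 s_2}{s_1+s_2}}$) against $h$ on $Q$ gives $\langle h\rangle_{Q, (\frac{s_1 s_2}{s_1+s_2})'}$, and one checks $(\frac{s_1 s_2}{s_1+s_2})' = t$; so in fact the third index that appears is $t$ as in the statement, giving the term $|Q|\langle f_1\rangle_{\widehat Q, r_1}\langle f_2\rangle_{\widehat Q, r_2}\langle h\rangle_{\widehat Q, t}$. One then defines the stopping children of $Q$: the maximal dyadic subcubes $Q'\subsetneq Q$ such that at least one of $\langle f_i\rangle_{\widehat{Q'},r_i}$ or $\langle h\rangle_{\widehat{Q'},t}$ exceeds a large constant multiple of its value on $\widehat Q$. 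A Calder\'on--Zygmund argument (maximal-function weak-type bounds at the exponents $r_1,r_2,t$) shows these stopping children occupy at most, say, half of $|Q|$, so the complementary set $E_Q$ has measure $\ge\frac12|Q|$, giving sparseness with $\eta=\frac12$; meanwhile the scales attained \emph{below} $\ell(Q')$ for $x$ in a stopping child $Q'$ are handled by the recursion, and the "intermediate" scales (between $\ell(Q')$ and $\ell(Q)$) for $x$ not in any stopping child contribute boundedly because on them the averages are still comparable to the frozen $Q$-averages. Summing the single terms over the resulting sparse tree gives $\langle\mathcal{M}(f_1,f_2),h\rangle\lesssim \Lambda_{\mathcal{S}_{r_1,r_2,t}}(f_1,f_2,h)$, with $\widehat Q$ absorbed into the grid (or into $3$ shifted grids, cf.\ the standard three-lattice trick).

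The main obstacle, and the reason the theorem is first proved for characteristic functions, is the interaction between the \emph{continuum} of scales in $\mathcal{M}_{\operatorname{full}}$ and the $L^p$-improving estimates, which for the full operator are not uniform in $t$ on all of $L^p$ but require a genuine maximal (local smoothing / square function) estimate. For characteristic functions $f_i=\chi_{F_i}$ one has the extra structural fact that $\langle \chi_{F_i}\rangle_{\widehat Q,r_i} = |F_i\cap\widehat Q|^{1/r_i}/|\widehat Q|^{1/r_i}$ is insensitive to $r_i$ up to the exponent, which lets one run the stopping argument cleanly and, crucially, lets one obtain the stronger form (any $r_i$ on the boundary region, not just strictly inside, or uniform constants) that is then needed when passing to general $f$. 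I would therefore: (i) prove the single-scale/localized improving bounds; (ii) set up the stopping-time recursion for $f_i=\chi_{F_i}$; (iii) verify the sparseness via the CZ measure estimate; (iv) sum. The passage from characteristic functions to general bounded compactly supported $f_1,f_2$ — which is the content of the later step leading to Theorem \ref{mainthm1:lac}, with the mild loss $\rho_i>r_i$ — is done afterwards by decomposing $f_i$ into level sets $\sum_k 2^k \chi_{\{|f_i|\sim 2^k\}}$ and using the $\rho_i>r_i$ room to sum the resulting sparse forms, so it is not part of the present statement.
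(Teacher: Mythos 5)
Your skeleton (localize to $Q_0$, pass to a dyadic grid, run a stopping-time recursion on the averages $\langle f_i\rangle_{\cdot,r_i}$ and $\langle h\rangle_{\cdot,t}$, get sparseness from the measure of the stopping children, feed in $L^p$-improving bounds for the single-scale averages) matches the paper's architecture, but the proposal is missing the step that actually makes the sum converge, and that step is the heart of the proof. Between two consecutive stopping generations there are infinitely many scales: every dyadic $Q\subset Q_0$ not meeting a stopping child contributes, and even after the linearisation (disjoint sets $B_Q$) the single-scale improving bound gives a term of size roughly $|Q|^{1/t'}\langle f_1\rangle_{Q,r_1}\langle f_2\rangle_{Q,r_2}\bigl(\int_{B_Q}h^t\bigr)^{1/t}$; using only the stopping comparability $\langle f_i\rangle_{Q,r_i}\lesssim\langle f_i\rangle_{Q_0,r_i}$ and H\"older over the sum in $Q$ one is left with $\bigl(\sum_Q|Q|\bigr)^{1/t'}$, which diverges. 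Your phrase that the intermediate scales ``contribute boundedly because the averages are still comparable to the frozen $Q$-averages'' is exactly the point at which the argument fails. The paper's Lemma~\ref{keylac} repairs this by (i) performing a Calder\'on--Zygmund decomposition $f_i=g_i+b_i$ at height $\sim\langle f_i\rangle_{Q_0,r_i}$ inside each stopping generation, so that the good--good term collapses via $\|\mathcal{A}_Qg_i\|_{L^\infty}\lesssim\langle f_i\rangle_{Q_0,r_i}$ and $\sum_Q\|h\chi_{B_Q}\|_{L^1}\le\|h\|_{L^1(Q_0)}$ (no sum over scales remains), and (ii) invoking not the plain improving bound but Lacey's \emph{continuity} estimates ($\|\mathcal{A}_QB-\tau_y\mathcal{A}_QB\|$ with a gain $(|y|/\ell_Q)^\eta$), which produce the geometric factor $2^{-\eta k}$ in the scale separation $k$ between the bad-function cubes and the averaging cube; this is what replaces the divergent $\sum_Q|Q|$. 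Neither ingredient appears in your proposal. Relatedly, the role of characteristic functions is not the full-vs-lacunary continuum of scales you suggest: it is needed (equally in both cases) to control $\langle B_{i,q-k}\rangle_{Q,r_i}$ by averages of \emph{disjoint} indicator pieces $\chi_{F_{i,q,k}},\chi_{E_{i,q,k}}$ and then re-sum over $q,k,j$ using exponent manipulations ($\langle\chi_F\rangle_{Q,r}=\langle\chi_F\rangle_{Q}^{1/\dot r}\cdot(\cdots)^{\tau}$) that are only available for indicators.

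A secondary but genuine error: your H\"older bookkeeping for the exponent on $h$ is wrong both times. With $t=\frac{s_1s_2}{s_1+s_2-s_1s_2}$ one has $\frac1t=\frac1{s_1}+\frac1{s_2}-1$, so neither $\frac1{s_1}+\frac1{s_2}+\frac1t=1$ nor $\bigl(\frac{s_1s_2}{s_1+s_2}\bigr)'=t$ holds (in the relevant regime $\frac1{s_1}+\frac1{s_2}>1$ the exponent $\frac{s_1s_2}{s_1+s_2}$ is below $1$ and has no conjugate). The correct accounting, as in the paper, is that the localized single-scale operators map $L^{r_i}\to L^{s_i'}$, the product then lies in $L^{t'}$ with $\frac1{t'}=\frac1{s_1'}+\frac1{s_2'}$, and pairing with $h\in L^t$ gives the stated form.
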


For a cube $Q\subset\mathbb{R}^{n}$ with side-length $l_{Q}=2^{q}$, define $\mathcal{A}_{Q}f_{i}(x)=\mathcal{A}_{2^{q-2}}(f_{i}\chi_{\frac{1}{3}Q})(x)$, $i=1,2$. Note that $\mathcal{A}_{Q}f_{i}$ is supported in the cube $Q$. The following lemma involving stopping time arguments is the key result in the proof of Theorem \ref{mainthm2:lac}. 

\begin{lemma}\label{keylac}
Let $n\geq 2$. For  $i=1,2$, let $(\frac{1}{r_{i}},\frac{1}{s_{i}})$ be in the interior of the triangle $L_n$ (respectively the trapezium $F_n$), with the additional condition $\frac{1}{s_{1}}+\frac{1}{s_{2}}>1$. 
 \label{l1} Let $f_{1}=\chi_{F_{1}}$, $f_{2}=\chi_{F_{2}}$, where $F_{1},F_{2}$ are measurable subsets of $Q_{0}$ and $h$ be a bounded function supported in $Q_{0}$. Let $C_{0}>1$ be a constant and let $\mathcal{D}_{0}$ be a collection of dyadic subcubes of $Q_0$ such that
\begin{equation*}
\sup_{Q'\in\mathcal{D}_{0}}\sup_{Q:Q'\subset Q\subset Q_{0}}
\Big(\frac{\langle f_{i}\rangle_{Q,r_{i}}}{\langle f_{i}\rangle_{Q_{0},r_{i}}}+\frac{\langle h\rangle_{Q,t}}{\langle h\rangle_{Q_{0},t}}\Big)\leq C_{0},\ \ \text{for}\ \  i=1,2, 
\end{equation*}
	where $t=\frac{s_{1}s_{2}}{s_{1}+s_{2}-s_{1}s_{2}}>1$.
	Then, 
	\begin{itemize} 
	\item[(i)] If $(\frac{1}{r_{i}},\frac{1}{s_{i}})$ are in the interior of $L_n$, with the additional condition $\frac{1}{s_{1}}+\frac{1}{s_{2}}>1$,
\begin{equation*}
|\langle \sup_{Q\in\mathcal{D}_{0}}\mathcal{A}_{Q}f_{1}\mathcal{A}_{Q}f_{2}, h\rangle|\lesssim  |Q_{0}|\langle f_{1}\rangle_{Q_{0},r_{1}}\langle f_{2}\rangle_{Q_{0},r_{2}}\langle h\rangle_{Q_{0},t}, 
\end{equation*}

\item[(ii)] If $(\frac{1}{r_{i}},\frac{1}{s_{i}})$ are in the interior of $F_n$, with the additional condition $\frac{1}{s_{1}}+\frac{1}{s_{2}}>1$,
\begin{equation*}
|\langle \sup_{Q\in\mathcal{D}_{0}}\sup_{2^{q-3}\leq t\leq 2^{q-2}}\mathcal{A}_{t}(f_{1}\chi_{\frac{1}{3}Q})(x)\mathcal{A}_{t}(f_{2}\chi_{\frac{1}{3}Q}),h\rangle|\lesssim  |Q_{0}|\langle f_{1}\rangle_{Q_{0},r_{1}}\langle f_{2}\rangle_{Q_{0},r_{2}}\langle h\rangle_{Q_{0},t}.
\end{equation*}
\end{itemize}
\end{lemma}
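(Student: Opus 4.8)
The plan is to linearize the supremum and then split the single-scale spherical average into a low-frequency piece, controlled directly by the stopping hypothesis through a Hardy--Littlewood estimate, and high-frequency Littlewood--Paley pieces, whose total contribution is summable precisely because $(\frac{1}{r_i},\frac{1}{s_i})$ lies in the \emph{interior} of $L_n$ (respectively $F_n$). To begin, I would fix a measurable selection $x\mapsto Q(x)\in\mathcal D_0$ with $x\in Q(x)$ realizing, up to a factor $2$, the supremum defining the left-hand side, so the quantity to estimate becomes $\int \mathcal A_{Q(x)}f_1(x)\,\mathcal A_{Q(x)}f_2(x)\,h(x)\,dx$ (in case (ii), with $\mathcal A_{Q(x)}$ replaced by the local average over $t\in[2^{q-3},2^{q-2}]$, $2^q=l_{Q(x)}$). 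A useful structural fact, guaranteed by the specific choices of the radius $2^{q-2}$ and the dilate $\frac13 Q$, is that $\mathcal A_Q f_i$ is supported in $Q$; hence at a fixed scale the cubes $Q\in\mathcal D_0$ with $l_Q=2^q$, and the supports of the corresponding $\mathcal A_Q f_i$, are pairwise disjoint. I would then write $d\sigma=\sum_{l\ge0}d\sigma^{\,l}$ with $\widehat{d\sigma^{\,l}}$ frequency-localized at $|\xi|\sim2^l$ for $l\ge1$ and $d\sigma^0$ a Schwartz bump, split $\mathcal A_Q f_i=\sum_{l\ge0}\mathcal A_Q^{\,l}f_i$, and reorganize the integral as $\sum_{l_1,l_2\ge0}$ of the corresponding pieces. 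The additional hypothesis $\frac{1}{s_1}+\frac{1}{s_2}>1$ is exactly what makes $t$ finite and, equivalently, makes $\frac{1}{s_1'}+\frac{1}{s_2'}+\frac{1}{t}=1$ a valid Hölder relation with all three exponents finite.

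For the term $l_1=l_2=0$: since $d\sigma^0$ is a rapidly decaying bump, $\mathcal A_Q^{\,0}f_i(x)$ is dominated by averages of $|f_i|$ over cubes comparable to $Q$ and its dilates, so the stopping hypothesis applied to those cubes gives $\sup_{Q\in\mathcal D_0}\mathcal A_Q^{\,0}f_i(x)\lesssim C_0\langle f_i\rangle_{Q_0,r_i}$, a constant; hence this contribution is $\lesssim C_0^2\langle f_1\rangle_{Q_0,r_1}\langle f_2\rangle_{Q_0,r_2}\int_{Q_0}|h|$, which is controlled by the right-hand side because $t\ge1$. The mixed terms with exactly one of $l_1,l_2$ equal to $0$ are handled the same way, estimating the $0$-factor by this constant, the other factor by the high-frequency bound below, and using $L^t(Q_0)\hookrightarrow L^{s_i}(Q_0)$.

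For the terms $l_1,l_2\ge1$, the key input is the single-scale $L^{r_i}\to L^{s_i'}$-improving inequality for $\mathcal A^{\,l}$ with a gain $2^{-l\delta_i}$, $\delta_i=\delta_i(r_i,s_i)>0$, valid when $(\frac{1}{r_i},\frac{1}{s_i})$ is in the interior of $L_n$ (it follows by interpolating the Fourier-decay bound $\|\mathcal A^{\,l}_\rho\|_{L^2\to L^2}\lesssim 2^{-l(n-1)/2}$ against the trivial estimates), and, for case (ii), the analogous improving inequality for the single-scale local maximal operator $\sup_{\rho\sim\rho_0}\mathcal A^{\,l}_\rho$, which is available exactly when $(\frac{1}{r_i},\frac{1}{s_i})$ lies in the interior of $F_n$. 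Grouping the integral according to the scale $2^q$ of $l_{Q(x)}$, using disjointness at a fixed scale, then Hölder in the cube variable with exponents $(s_1',s_2',t)$ --- admissible by the relation above --- and the improving inequality on each cube, one obtains a bound at scale $2^q$ of the shape $2^{-l_1\delta_1-l_2\delta_2}\,(2^q)^{n(\frac{1}{s_1'}-\frac{1}{r_1})+n(\frac{1}{s_2'}-\frac{1}{r_2})}$ times the product of the localized $L^{r_i}$/$L^t$ averages of $f_1,f_2,h$ over cubes comparable to $Q$. Invoking the stopping hypothesis once more to replace those localized averages by $C_0\langle f_i\rangle_{Q_0,r_i}$ and $C_0\langle h\rangle_{Q_0,t}$ and summing over the $(l_{Q_0}/2^q)^n$ cubes of side $2^q$, the powers of $2^q$ cancel, leaving a scale bound that is a constant times $2^{-l_1\delta_1-l_2\delta_2}\,|Q_0|\,\langle f_1\rangle_{Q_0,r_1}\langle f_2\rangle_{Q_0,r_2}\langle h\rangle_{Q_0,t}$ with no $q$-dependence. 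Since the left-hand side is a supremum rather than a sum over scales, at each point only one scale contributes, so the scale summation is harmless, and summing the geometric series $\sum_{l_1,l_2\ge1}2^{-l_1\delta_1-l_2\delta_2}$ finishes the proof.

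The main obstacle is precisely this taming of the supremum over the unbounded family $\mathcal D_0$: at a fixed frequency level the operator does not decay across scales, so the $(2^q)$-powers produced by the improving inequality must cancel exactly against the number of dyadic cubes of side $2^q$ inside $Q_0$, which forces using the \emph{sharp} improving exponents and hence requiring $(\frac{1}{r_i},\frac{1}{s_i})$ to be interior to $L_n$ (resp. $F_n$), so that the frequency gains $\delta_i$ stay strictly positive. For part (ii) there is the additional difficulty that the supremum over $Q$ hides a \emph{continuous} supremum over radii $t\in[2^{q-3},2^{q-2}]$: handling it replaces the single-scale improving inequality by the stronger local-maximal improving inequality, which holds on $F_n$ but not on all of $L_n$ --- this is the structural reason why the trapezium $F_n$ governs $\mathcal M_{\operatorname{full}}$ rather than the triangle $L_n$.
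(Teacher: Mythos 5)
Your overall architecture --- linearize, decompose the spherical measure into frequency bands, use single-scale $L^{r_i}\to L^{s_i'}$ improving estimates with gain $2^{-l\delta_i}$ on the interior of $L_n$ (resp.\ $F_n$), and close with H\"older via $\frac{1}{s_1'}+\frac{1}{s_2'}+\frac{1}{t}=1$ --- contains the right ingredients, and your identification of the roles of $L_n$ versus $F_n$ and of the condition $\frac{1}{s_1}+\frac{1}{s_2}>1$ is correct. But there is a genuine gap at the step where you dispose of the summation over scales. After linearization and H\"older on each cube $Q$, the disjointness of the sets $E_Q=\{x:Q(x)=Q\}$ is only felt by the factor $\|h\chi_{E_Q}\|_{L^t}$; the factors $\|\mathcal A^{l_i}_Qf_i\|_{L^{s_i'}}$ are estimated over the \emph{full} cubes $Q$, and the cubes of $\mathcal D_0$ tile a fixed portion of $Q_0$ at every one of infinitely many scales. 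Your per-scale bound is $2^{-l_1\delta_1-l_2\delta_2}|Q_0|\langle f_1\rangle_{Q_0,r_1}\langle f_2\rangle_{Q_0,r_2}\langle h\rangle_{Q_0,t}$ with no decay in $q$, so summing over $q$ diverges; the alternative of applying H\"older to the whole sum over $Q\in\mathcal D_0$ produces the factor $\big(\sum_{Q}|Q|\big)^{1/t'}$, which is not $O(|Q_0|^{1/t'})$ because the same region of $Q_0$ is counted once per scale. The remark that ``at each point only one scale contributes'' is true of the original linearized integral, but once you have taken absolute values, applied H\"older cube-by-cube and invoked the stopping condition, that pointwise single-scale structure survives only in the $h$-slot and cannot be reinvoked for the $f$-slots. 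The frequency gain $2^{-l\delta}$ is decay in $l$, not in $q$, and cannot substitute.

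The paper closes exactly this hole with a Calder\'on--Zygmund decomposition $f_i=g_i+b_i$ at height $\sim\langle f_i\rangle_{Q_0,r_i}$, which is absent from your proposal. For the good parts, $\|g_i\|_{L^\infty}\lesssim\langle f_i\rangle_{Q_0,r_i}$ collapses the whole product onto the $h$-factor, where the disjointness of the sets $B_Q$ does suffice (this is the $GG$ term, of which your $l_1=l_2=0$ computation is a special case). For the bad parts, $b_i=\sum_kB_{i,k}$ with $B_{i,k}$ supported on the stopping cubes of sidelength $2^k$ and of mean zero on each; pairing a cube $Q$ of sidelength $2^q$ with the pieces $B_{i,q-k}$ and exploiting the mean-zero property through Lacey's H\"older-continuity estimate $\|\mathcal A_QB-\tau_{-y}\mathcal A_QB\|\lesssim(|y|/l_Q)^{\eta}(\cdots)$ produces geometric decay $2^{-\eta k}$ in the scale offset $k$, while for \emph{fixed} $k$ the supports of the $B_{i,q-k}$ (the sets $F_{i,q,k}$ and $E_{i,q,k}$) are pairwise disjoint as $q$ varies. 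It is this across-scale disjointness in the $f$-variables, combined with the decay in $k$, that makes the double sum over $Q\in\mathcal D_0$ and $k$ converge; your Littlewood--Paley splitting of $d\sigma$ provides neither, so the assembly does not close as written.
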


We assume Lemma~\ref{keylac} for a moment and complete the proof of Theorems~\ref{mainthm1:lac} and  \ref{mainthm2:lac}. 

\subsection{Proof of Theorem~\ref{mainthm2:lac}}
We will present the proof for $\mathcal M_{\operatorname{lac}}$, and after that we will point out the main differences in the proof for $\mathcal M_{\operatorname{full}}$.

First note that using standard arguments we can reduce our work to proving analogous results for the dyadic version of the maximal functions under consideration. Indeed, let $f_{1}$ and $f_{2}$ be positive functions with their support contained inside a cube $Q_{0}$. Fix a dyadic lattice $\mathcal{D}$ and consider the maximal function 
$$\mathcal{M}_{\mathcal{D}}(f_{1},f_{2})(x):=\sup_{Q\in\mathcal{D}}|\mathcal{A}_{Q}f_{1}(x)\mathcal{A}_{Q}f_{2}(x)|.$$
Since $\operatorname{supp}(f_{i})\subset Q_{0}$, we get that $\mathcal{A}_{Q}f_{i}=0$ if $Q\cap Q_{0}=\emptyset$ and also $\mathcal{A}_{Q}f_{i}=0$ for large enough cubes. In view of this, it is enough to prove corresponding sparse domination for the bilinear maximal operator $$\mathcal{M}_{\mathcal{D}\cap Q_{0}}(f_{1},f_{2})(x)=\sup_{Q\in\mathcal{D}\cap Q_{0}}|\mathcal{A}_{Q}f_{1}(x)\mathcal{A}_{Q}f_{2}(x)|.$$
Then, $\langle\mathcal{M}_{\operatorname{lac}}(f_{1},f_{2}),h\rangle $ can be dominated by the sum of finitely many sparse forms. Finally, one can find a universal sparse form (see \cite[Proposition 2.1]{JM}) in the sparse domination.

We proceed to prove the sparse domination result for the operator $\mathcal{M}_{\mathcal{D}\cap Q_{0}}$. 
Let $C_0$ be a constant and $\mathcal{E}_{Q_{0}}$ denote the collection of maximal dyadic subcubes of $Q_{0}$ satisfying 
		\begin{equation}
		\label{inequ}
			\langle f_{1}\rangle_{Q,r_{1}}>C_{0}\langle f_{1}\rangle_{Q_{0},r_{1}}\quad \text{or}\quad \langle f_{2}\rangle_{Q,r_{2}}>C_{0}\langle f_{2}\rangle_{Q_{0},r_{2}}\quad \text{or}\quad  \langle h\rangle_{Q,t}>C_{0}\langle h\rangle_{Q_{0},t}.
		\end{equation}
Let $E_{Q_{0}}=\cup_{P\in\mathcal{E}_{Q_{0}}}P$. Note that we can choose $C_{0}>1$ so that $|E_{Q_{0}}|<\frac{1}{2}|Q_{0}|$. Writing $F_{Q_{0}}=Q_{0}\setminus E_{Q_{0}}$, we have that $|F_{Q_{0}}|\geq \frac{1}{2}|Q_{0}|$.

Next, denote $\mathcal{D}_{0}:=\{Q\in\mathcal{D}\cap Q_{0}:Q\cap E_{Q_{0}}=\emptyset\}$ and observe that for  $Q\in\mathcal{D}_{0}$ we get that  
		\begin{equation}
		\label{inequm}
			\langle f_{1}\rangle_{Q,r_{1}}\leq C_{0}\langle f_{1}\rangle_{Q_{0},r_{1}} \quad \text{and}\quad \langle f_{2}\rangle_{Q,r_{2}}\leq C_{0}\langle f_{2}\rangle_{Q_{0},r_{2}}\quad \text{and} \quad \langle h\rangle_{Q,t}\leq C_{0}\langle h\rangle_{Q_{0},t}.
		\end{equation}
For, if \eqref{inequ} holds,
		then there exists $P\in\mathcal{E}_{Q_{0}}$ such that $P\supset Q$. This will contradict the definition of $\mathcal{D}_{0}$. In a similar way, note that if $Q'\in\mathcal{D}_{0}$ and $Q'\subset Q\subset Q_{0}$, then we also have \eqref{inequm}. These two observations together give us that, for  $i=1,2$,
		\begin{equation}
		\label{stph}
\sup_{Q'\in\mathcal{D}_{0}}\sup_{Q:Q'\subset Q\subset Q_{0}}\langle f_{i}\rangle_{Q,r_{i}}\leq C_{0}\langle f_{i}\rangle_{Q_{0},r_{i}} \quad \text{and}~ \quad \sup_{Q'\in\mathcal{D}_{0}}\sup_{Q:Q'\subset Q\subset Q_{0}}\langle h\rangle_{Q,t}\leq C_{0}\langle h\rangle_{Q_{0},t}.
		\end{equation}
		
Now we claim, using a standard linearisation argument, that it is enough to prove sparse domination for a suitable linearised form. For, let $\mathcal{Q}$ be the collection of all dyadic subcubes of $Q_{0}$. Given $Q\in\mathcal{Q}$, consider the set 
$$ H_{Q}:=\Big\{x\in Q: \mathcal{A}_{Q}f_{1}(x)\mathcal{A}_{Q}f_{2}(x)\geq\frac{1}{2}\sup_{P\in\mathcal{Q}}\mathcal{A}_{P}f_{1}(x)\mathcal{A}_{P}f_{2}(x)\Big\}.
	 $$
Note that for any $x\in Q_{0},$ there exists a cube $Q\in\mathcal{Q}$ such that $x\in H_{Q}$. Set $B_{Q}=H_{Q}\setminus\bigcup_{Q'\supseteq Q}H_{Q'}$. Observe that $\{B_{Q}\}_{Q\in\mathcal{Q}}$ are pairwise disjoint and $\bigcup_{Q\in\mathcal{Q}}B_{Q}=\bigcup_{Q\in\mathcal{Q}}H_{Q}$. Then
\begin{align*}
\langle\sup_{P\in\mathcal{Q}} \mathcal{A}_{P}f_{1}\mathcal{A}_{P}f_{2},h\rangle&= \sum_{Q\in\mathcal{Q}}\int_{B_{Q}}\sup_{P\in\mathcal{Q}}\left(\mathcal{A}_{P}f_{1}(x)\mathcal{A}_{P}f_{2}(x)\right)h(x)dx\\
 &\leq  2\sum_{Q\in\mathcal{Q}}\int_{B_{Q}}\mathcal{A}_{Q}f_{1}(x)\mathcal{A}_{Q}f_{2}(x)h(x)dx\\
 &\leq  2\sum_{Q\in\mathcal{Q}}\int_{\mathbb{R}^{n}}\mathcal{A}_{Q}f_{1}(x)\mathcal{A}_{Q}f_{2}(x)h(x)\chi_{B_{Q}}(x)dx\\ 
 &= 2\sum_{Q\in\mathcal{Q}}\langle\mathcal{A}_{Q}f_{1}\mathcal{A}_{Q}f_{2},h_{Q}\rangle, 
\end{align*}
where $h_{Q}=h\chi_{B_{Q}}$.
The estimate above allows us to work with a linearised form instead of the supremum. Notice that this argument uses the full collection of dyadic subcubes of the given cube $Q_0$. Indeed, the  linearisation may be used for the collection of cubes under consideration in the following manner. Note that if $Q\in \mathcal{D}_{0}$ then $Q\subset F_{Q_{0}}$ and we have that 
\begin{align*}
	\big|\langle\sup_{Q\in\mathcal{D}_{0}}\mathcal{A}_{Q}f_{1}\mathcal{A}_{Q}f_{2},h\rangle\big|=\big|\langle\sup_{Q\in\mathcal{Q}}\mathcal{A}_{Q}f_{1}\mathcal{A}_{Q}f_{2},h\chi_{F_{Q_{0}}}\rangle\big|&\leq 2\big|\sum_{Q\in\mathcal{Q}}\langle\mathcal{A}_{Q}f_{1}\mathcal{A}_{Q}f_{2},h\chi_{B_{Q}}\chi_{F_{Q_{0}}}\rangle\big|\\
	&=2\big|\sum_{Q\in\mathcal{D}_{0}}\langle\mathcal{A}_{Q}f_{1}\mathcal{A}_{Q}f_{2},h\chi_{B_{Q}}\rangle\big|.
	\end{align*}
Therefore, it suffices to prove the sparse domination for 
\begin{equation*}
\sum_{Q\in \mathcal{D}\cap Q_{0}}\langle \mathcal{A}_{Q}f_{1}\mathcal{A}_{Q}f_{2},h\chi_{B_{Q}}\rangle.
\end{equation*}	
	Next, observe that for any cube $Q\in\mathcal{D}\cap Q_{0}$ we either have $Q\in\mathcal{D}_{0}$ or $Q\subset P$ for some $P\in\mathcal{E}_{Q_{0}}$. Therefore,  
		\begin{equation}\label{factorization}
	 \sum_{Q\in\mathcal{D}\cap Q_{0}}\langle \mathcal{A}_{Q}f_{1}\mathcal{A}_{Q}f_{2},h\chi_{B_{Q}}\rangle =\sum_{Q\in\mathcal{D}_{0}}\langle \mathcal{A}_{Q}f_{1}\mathcal{A}_{Q}f_{2},h\chi_{B_{Q}}\rangle +\sum_{P\in\mathcal{E}_{Q_{0}}}\sum_{Q\subset P}\langle \mathcal{A}_{Q}f_{1}\mathcal{A}_{Q}f_{2},h\chi_{B_{Q}}\rangle.
		\end{equation}
We would like to remark here that so far we have not required that $f_1$ and $f_2$ are characteristic functions. Now we invoke Lemma~\ref{keylac} to get that  
		\begin{equation}\label{itera1}
		\sum_{Q\in\mathcal{D}_{0}}\langle \mathcal{A}_{Q}f_{1}\mathcal{A}_{Q}f_{2},h\chi_{B_{Q}}\rangle\lesssim |Q_{0}|\langle f_{1}\rangle_{Q_{0},r_{1}}\langle f_{2}\rangle_{Q_{0},r_{2}}\langle h\rangle_{Q_{0},t}.
		\end{equation}
Let $\{P_{j}\}$ be an enumeration of cubes in $\mathcal{E}_{Q_{0}}$. Then we can rewrite the remaining term as 
		\begin{equation*}
	\sum_{P\in\mathcal{E}_{Q_{0}}}\sum_{Q\subset P}\langle \mathcal{A}_{Q}f_{1}\mathcal{A}_{Q}f_{2},h\chi_{B_{Q}}\rangle=		\sum^{\infty}_{j=1}\sum_{Q\in P_{j}\cap\mathcal{D}}\langle \mathcal{A}_{Q}f_{1}\mathcal{A}_{Q}f_{2},h\chi_{B_{Q}}\rangle.
		\end{equation*}
We repeatedly use the estimate above for each $j$ and put all the terms together to get a sparse collection $\mathcal{S}$ so that the following holds
		\begin{equation*}
		\sum_{Q\in\mathcal{D}\cap Q_{0}}\langle \mathcal{A}_{Q}f_{1}\mathcal{A}_{Q}f_{2},h\chi_{B_{Q}}\rangle\lesssim\sum_{S\in\mathcal{S}}|S|\langle f_{1}\rangle_{S,r_{1}}\langle f_{2}\rangle_{S,r_{2}}\langle h\rangle_{S,t}.
\end{equation*}
This completes the proof of Theorem~\ref{mainthm2:lac} for $\mathcal M_{\operatorname{lac}}$. 

In order to prove the corresponding results for the operator $\mathcal M_{\operatorname{full}}$, we require a bilinear analogue of local spherical maximal functions. It is defined as follows
\begin{equation}
\label{supfu}
\widetilde{\mathcal{M}}(f_{1},f_{2})(x):=\sup_{t\in[1,2]}\mathcal{A}_{t}f_{1}(x)\mathcal{A}_{t}f_{2}(x).
\end{equation}
Again standard arguments reduce the task to consider a dyadic version with the maximal function
$$
\sup_{Q\in\mathcal{D}\cap Q_0}|\widetilde{\mathcal{M}}_{Q}(f_{1},f_{2})(x)|,
$$
where
\begin{equation*}
\widetilde{\mathcal{M}}_{Q}(f_{1},f_{2})(x):=\sup_{2^{q-3}\leq t\leq 2^{q-2}}\mathcal{A}_{t}(f_{1}\chi_{\frac{1}{3}Q})(x)\mathcal{A}_{t}(f_{2}\chi_{\frac{1}{3}Q})(x).
\end{equation*}
Note that a linearisation trick as earlier tells us that it suffices to replace the supremum~\eqref{supfu} with the form 
\begin{equation*}
|\sum_{Q\in \mathcal{D}_0}\langle \widetilde{\mathcal{M}}_{Q}(f_{1},f_{2}), h_{Q}\rangle|,
\end{equation*} 
with $h_{Q}=h\chi_{B_{Q}}$ and $B_{Q}=E_{Q}\setminus \bigcup_{Q'\supset Q}E_{Q'} $, where
	 $$E_{Q}=\Big\{x\in Q\in\mathcal{Q}: \widetilde{\mathcal{M}}_{Q}(f_{1},f_{2})(x)\geq \frac{1}{2}\sup_{P\in\mathcal{Q}}\widetilde{\mathcal{M}}_{P}(f_{1},f_{2})(x)\Big\}.$$
The remaining part of the proof can be completed following the lacunary case. 

\subsection{Proof of Theorem \ref{mainthm1:lac}}

We will make use of Theorem~\ref{mainthm2:lac} in proving Theorem~\ref{mainthm1:lac}. The proof is unified in both lacunary and full cases. 

Let $f_{1}, f_{2},$ and $h$ be non-negative compactly supported bounded functions with support in the cube $Q_{0}$. We use the same argument as in the proof of Theorem~\ref{mainthm2:lac} up to the estimate (\ref{factorization}) with the same notation. In fact, it is enough to prove an analogue of estimate (\ref{itera1}) for the setting under consideration, i.e., we need to show that 
\begin{equation*}
 \sum_{Q\in\mathcal{D}_0}\langle\mathcal{A}_{Q}f_{1}\mathcal{A}_{Q}f_{2}, h_Q\rangle 
 \lesssim  |Q_{0}|\langle f_{1}\rangle_{Q_{0},{\rho}_{1}}\langle f_{2}\rangle_{Q_{0},{\rho}_{2}}\langle h\rangle_{Q_{0},t},
\end{equation*}
where ${\rho}_{1}>r_{1}$, ${\rho}_{2}>r_{2}$ and $\frac{1}{\rho_1}+\frac{1}{\rho_2}< 1$.

In order to use Theorem~\ref{mainthm2:lac}, we first need to decompose functions $f_1$ and $f_2$ into suitable characteristic functions. Consider 
$E_{m}=\{x\in Q_{0}:2^{m}\leq f_{1}(x)\leq 2^{m+1}\}$ and $F_{n}=\{x\in Q_{0}:2^{n}\leq f_{2}(x)\leq 2^{n+1}\}$. Then there exist $m_{0}$, $n_{0}>1$ such that $E_{m}=\emptyset$ for all  $m>m_{0}$ and $F_{n}=\emptyset$ for all  $n>n_{0}$. Denote $f^{m}_{1}=f_{1}\chi_{E_{m}}$ and $f^{n}_{2}=f_{2}\chi_{F_{n}}$. Thus, we use Theorem~\ref{mainthm2:lac} for each pair of characteristic functions $\chi_{E_{m}}$ and $\chi_{F_{n}}$ and obtain the sparse  domination for the functions $f^{m}_{1}$ and $f^{n}_{2}$ as follows
\begin{align*}
\sum_{Q\in\mathcal{D}_0}\langle \mathcal{A}_{Q}f^{m}_{1}\mathcal{A}_{Q}f^{n}_{2}, h_{Q}\rangle &\leq  2^{m+1}2^{n+1}\sum_{Q\in\mathcal{D}_0}\langle \mathcal{A}_{Q}\chi_{E_{m}}\mathcal{A}_{Q}\chi_{F_{n}}, h_{Q}\rangle\\
&\lesssim 2^{m+1}2^{n+1}\sum_{Q\in\mathcal{D}\cap Q_{0}}\langle \mathcal{A}_{Q}\chi_{E_{m}}\mathcal{A}_{Q}\chi_{F_{n}}, h_{Q}\chi_{F_{Q_{0}}}\rangle\\
&\lesssim  2^{m+1}2^{n+1}\sum_{S\in\mathcal{S}_{m,n}}|S|\langle\chi_{E_{m}}\rangle_{S,r_{1}}\langle\chi_{F_{n}}\rangle_{S,r_{2}}\langle h\chi_{F_{Q_{0}}}\rangle_{S,t}
\end{align*}
where $\mathcal{S}_{m,n}$ is the sparse family corresponding to characteristic functions $\chi_{E_{m}}$ and $\chi_{F_{n}}$. 

Next, using the stopping time condition on the function $h$ as given in~(\ref{stph}), we get 
\begin{equation*}
\sum_{S\in\mathcal{S}_{m,n}}|S|\langle\chi_{E_{m}}\rangle_{S,r_{1}}\langle\chi_{F_{n}}\rangle_{S,r_{2}}\langle h\chi_{F_{Q_{0}}}\rangle_{S,t}
 \lesssim  \langle h\rangle_{Q_{0},t}\sum_{S\in\mathcal{S}_{m,n}}|S|\langle\chi_{E_{m}}\rangle_{S,r_{1}}\langle\chi_{F_{n}}\rangle_{S,r_{2}}.
\end{equation*}
Choose $\widetilde{\rho_{1}}>r_{1}$ and $\widetilde{\rho_{2}}>r_{2}$ such that $\frac{1}{\widetilde{\rho}_{1}}+\frac{1}{\widetilde{\rho}_{2}}\leq 1$. When $\frac{1}{\widetilde{\rho}_{1}}+\frac{1}{\widetilde{\rho}_{2}}= 1$, as an easy consequence of the Carleson embedding theorem (see~\cite{Laceysawyer})  we get that  
\begin{align*}
\sum_{S\in\mathcal{S}_{m,n}}|S|\langle\chi_{E_{m}}\rangle_{S,r_{1}}\langle\chi_{F_{n}}\rangle_{S,r_{2}}
&= \sum_{S\in\mathcal{S}_{m,n}}|S|^{\frac{1}{\widetilde{\rho}_{1}}}\langle\chi_{E_{m}}\rangle_{S,r_{1}}|S|^{\frac{1}{\widetilde{\rho}_{2}}}\langle\chi_{F_{n}}\rangle_{S,r_{2}}\\
&\leq  \big(\sum_{S\in\mathcal{S}_{m,n}}|S|\langle \chi_{E_{m}}\rangle^{\widetilde{\rho}_{1}}_{S,r_{1}}\big)^{\frac{1}{\widetilde{\rho}_{1}}}\big(\sum_{S\in\mathcal{S}_{m,n}}|S|\langle \chi_{F_{n}}\rangle^{\widetilde{\rho}_{2}}_{S,r_{2}}\big)^{\frac{1}{\rho}_{2}}\\
&\leq  \langle\chi_{E_{m}}\rangle_{Q_{0},\widetilde{\rho}_{1}}\langle\chi_{F_{n}}\rangle_{Q_{0},\widetilde{\rho}_{2}}|Q_{0}|. 
\end{align*}
Now, for the case when $\frac{1}{\widetilde{\rho}_{1}}+\frac{1}{\widetilde{\rho}_{2}}<1$ we choose $\widetilde{\rho}_3>0$ such that $\frac{1}{\widetilde{\rho}_{1}}+\frac{1}{\widetilde{\rho}_{2}}+\frac{1}{\widetilde{\rho}_{3}}=1$. Then, we have 
\begin{align*}
\sum_{S\in\mathcal{S}_{m,n}}|S|\langle\chi_{E_{m}}\rangle_{S,r_{1}}\langle\chi_{F_{n}}\rangle_{S,r_{2}}
& = \sum_{S\in\mathcal{S}_{m,n}}|S|^{\frac{1}{\widetilde{\rho}_{1}}}\langle\chi_{E_{m}}\rangle_{S,r_{1}}|S|^{\frac{1}{\widetilde{\rho}_{2}}}\langle\chi_{F_{n}}\rangle_{S,r_{2}}|S|^{\frac{1}{\widetilde{\rho}_{3}}}\\
&\lesssim  \langle\chi_{E_{m}}\rangle_{Q_{0},\widetilde{\rho}_{1}}\langle\chi_{F_{n}}\rangle_{Q_{0},\widetilde{\rho}_{2}}|Q_{0}|^{\frac{1}{\widetilde{\rho}_{1}}+\frac{1}{\widetilde{\rho}_{2}}}|Q_{0}|^{\frac{1}{\widetilde{\rho}_{3}}}\\
&=\langle\chi_{E_{m}}\rangle_{Q_{0},\widetilde{\rho}_{1}}\langle\chi_{F_{n}}\rangle_{Q_{0},\widetilde{\rho}_{2}}|Q_{0}|.
\end{align*}
Finally, using~\cite[Lemma 4.6 and Lemma 4.7]{Luz}, we obtain  
\begin{eqnarray*}
|\langle \sup_{Q\in\mathcal{D}_{0}}\mathcal{A}_{Q}f_{1}\mathcal{A}_{Q}f_{2}, h\rangle|\lesssim |Q_{0}|\langle f_{1}\rangle_{Q_{0},{\rho}_{1}}\langle f_{2}\rangle_{Q_{0},{\rho}_{2}}\langle h\rangle_{Q_{0},t},
\end{eqnarray*}
where ${\rho}_{1}>\widetilde{\rho_{1}}$ and ${\rho}_{2}>\widetilde{\rho_{2}}$.

\subsection{Proof of Lemma~\ref{keylac}}
Finally we provide the proof of Lemma~\ref{keylac}. This is the most technical and tedious part of the paper. We will begin by giving the proof in the lacunary case, and after that we will sketch the significantly different parts in case of $\mathcal M_{\operatorname{full}}$.

First note that one can use the same linearisation trick as in the proof of Theorem~\ref{mainthm2:lac}. This would mean that it is enough to prove the following estimate
\begin{equation*}
		\sum_{Q\in\mathcal{D}_{0}}\langle \mathcal{A}_{Q}f_{1}\mathcal{A}_{Q}f_{2},h\chi_{B_{Q}}\rangle\lesssim |Q_{0}|\langle f_{1}\rangle_{Q_{0},r_{1}}\langle f_{2}\rangle_{Q_{0},r_{2}}\langle h\rangle_{Q_{0},t}.
		\end{equation*}
Here we have used the same notation as in Theorem~\ref{mainthm2:lac}. For $i=1,2$, let  
$$
\gamma_{f_{i}}=\{\text{collection of maximal dyadic subcubes} ~ P\subset Q_{0}:\langle f_{i}\rangle_{P,r_{i}}>2C_{0}\langle f_{i}\rangle_{Q_{0},r_{i}}\}.
$$
Applying the Calder\'on-Zygmund decomposition to each $f_i$ at the height $\alpha_i=2C_0\langle f_{i}\rangle_{Q_{0},r_{i}}$, $i=1,2$, we can decompose  
$$
f_{i}=b_{i}+g_{i}, 
$$
where $\| g_{i}\|_{L^{\infty}}\lesssim \langle f_{i}\rangle_{Q_{0},r_{i}}$ and
$$
b_{i}=\sum_{P\in \gamma_{f_{i}}}\big(f_{i}-\langle f_{i}\rangle_{P}\big)\chi_{P}=\sum_{k=-\infty}^{q_0-1}\sum_{P\in B_i(k)}\big(f_{i}-\langle f_{i}\rangle_{P}\big)\chi_{P}=:\sum_{k=-\infty}^{q_0-1}B_{i,k}, 
$$ 
with $l_{Q_0}=2^{q_0}$ and $B_i(k)=\{P\in \gamma_{f_{i}}:l_{P}=2^{k}\}$. Now, 
\begin{align*}
\big|\sum_{Q\in \mathcal{D}_{0}}\langle \mathcal{A}_{Q}f_{1}\mathcal{A}_{Q}f_{2}, h_{Q}\rangle\big|
&\leq  \big|\sum_{Q\in \mathcal{D}_{0}}\langle \mathcal{A}_{Q}g_{1}\mathcal{A}_{Q}g_{2}, h_{Q}\rangle\big|+\big|\sum_{Q\in \mathcal{D}_{0}}\langle \mathcal{A}_{Q}g_{1}\mathcal{A}_{Q}b_{2}, h_{Q}\rangle\big|\\ 
&\quad+\big|\sum_{Q\in \mathcal{D}_{0}}\langle \mathcal{A}_{Q}b_{1}\mathcal{A}_{Q}g_{2}, h_{Q}\rangle\big|+\big|\sum_{Q\in \mathcal{D}_{0}}\langle \mathcal{A}_{Q}b_{1}\mathcal{A}_{Q}b_{2}, h_{Q}\rangle\big| \\
&=:GG+GB+BG+BB. 
\end{align*}
We estimate all the four parts separately. Note that in view of symmetry in $GB$ and $BG$ parts, it is enough to estimate one of them. 

\textbf{Estimate for $GG$ (both functions good) part.} Using the fact that $t> 1$, we have
\begin{align*}
GG \leq  \sum_{Q\in \mathcal{D}_{0}}\| \mathcal{A}_{Q}g_{1}\mathcal{A}_{Q}g_{2}\|_{L^{\infty}}\| h_{Q}\|_{L^{1}}&\lesssim \langle f_{1}\rangle_{Q_{0},r_{1}} \langle f_{2}\rangle_{Q_{0},r_{2}}\sum_{Q\in \mathcal{D}_{0}}\int |h(x)\chi_{B_{Q}}(x)|dx\\
&\lesssim  \langle f_{1}\rangle_{Q_{0},r_{1}} \langle f_{2}\rangle_{Q_{0},r_{2}}\langle h\rangle_{Q_{0}}|Q_{0}|\\ 
&\lesssim  \langle f_{1}\rangle_{Q_{0},r_{1}} \langle f_{2}\rangle_{Q_{0},r_{2}}\langle h\rangle_{Q_{0},t}|Q_{0}|.
\end{align*}

\textbf{Estimate for $BG$ (one function bad and one function good) part.} Arguing with a similar argument as in the proof of Theorem \ref{mainthm2:lac} we note that, for all $Q\in \mathcal{D}_{0}$ and $P\in \gamma_{f_1}$, if $P\cap Q \neq \emptyset$, then $P\subset Q$. Therefore, for any $Q\in \mathcal{D}_{0}$ with $l_Q=2^q$, we have
$$
\langle \mathcal{A}_{Q}b_{1}\mathcal{A}_{Q}g_{2}, h_{Q}\rangle=\sum_{k<q}\langle \mathcal{A}_{Q}B_{1,k}\mathcal{A}_{Q}g_{2}, h_{Q}\rangle=\sum_{k=1}^{\infty}\langle \mathcal{A}_{Q}B_{1,q-k}\mathcal{A}_{Q}g_{2}, h_{Q}\rangle.
$$
Thus,
$$
\big|\sum_{Q\in \mathcal{D}_{0}}\langle \mathcal{A}_{Q}b_{1}\mathcal{A}_{Q}g_{2}, h_{Q}\rangle\big|\lesssim\sum_{k=1}^{\infty}\sum_{Q\in \mathcal{D}_{0}}|\langle \mathcal{A}_{Q}B_{1,q-k}\mathcal{A}_{Q}g_{2}, h_{Q}\rangle|.
$$
Hence, 
\begin{align*}
&BG =\big|\sum_{Q\in\mathcal{D}_{0}}\int b_{1}(x)\mathcal{A}_{Q}^{*}(\mathcal{A}_{Q}g_{2}\cdot h_{Q})(x)dx\big|\\
&\lesssim\big|\sum_{Q\in\mathcal{D}_{0}}\sum^{\infty}_{k=1}\sum_{P\in B_1(q-k)}\int_{P}B_{1,q-k}(x)\mathcal{A}_{Q}^{*}(\mathcal{A}_{Q}g_{2}\cdot h_{Q})(x)dx \big| \\
&\leq  \sum^{\infty}_{k=1}\sum_{Q\in\mathcal{D}_{0}}\sum_{P\in B_1(q-k)}\frac{1}{|P|}\Big|\int_{P}\int_{P}B_{1,q-k}(x)[\mathcal{A}^{*}_{Q}(\mathcal{A}_{Q}g_{2}\cdot h_{Q})(x) -\mathcal{A}^{*}_{Q}(\mathcal{A}_{Q}g_{2}\cdot h_{Q})(x')]dxdx'\Big|.	
\end{align*}
Write $x'=x-y$ for $y\in P_{0}$, where $P_{0}$ is a cube centered at $0$ with side-length $2l_{P}$. We have, by \cite[Lemma 2.3]{Lacey},
\begin{align*}
BG &\lesssim  \sum^{\infty}_{k=1}\sum_{Q\in\mathcal{D}_{0}}\frac{1}{|P_{0}|}\Big|\int_{P_{0}}\int_{Q}B_{1,q-k}(x)[\mathcal{A}^{*}_{Q}(\mathcal{A}_{Q}g_{2}\cdot h_{Q})(x)-\tau_{y}\mathcal{A}^{*}_{Q}(\mathcal{A}_{Q}g_{2}\cdot h_{Q})(x)]dxdy\Big|\\
&\lesssim  \sum^{\infty}_{k=1}\sum_{Q\in\mathcal{D}_{0}}\frac{1}{|P_{0}|}\int_{P_{0}}\Big|\int_{Q}(\mathcal{A}_{Q}B_{1,q-k}-\tau_{-y}\mathcal{A}_{Q}B_{1,q-k})(x)\mathcal{A}_{Q}g_{2}(x)h_{Q}(x)dx\Big|dy\\
&\lesssim  \sum^{\infty}_{k=1}\sum_{Q\in\mathcal{D}_{0}}\frac{1}{|P_{0}|}\int_{P_{0}}\Big(\frac{|y|}{l_{Q}}\Big)^{\eta}|Q|\langle B_{1,q-k}\rangle_{Q,r_{1}}\langle \mathcal{A}_{Q}g_{2}\cdot h_{Q}\rangle_{Q,s_{1}}dy. 
\end{align*}
Further, since $y\in P_{0}$ we have $|y|\lesssim 2^{q-k+1}$. This implies that
$$
BG \lesssim  \sum^{\infty}_{k=1}2^{-\eta k}\sum_{Q\in \mathcal{D}_{0}}|Q|\langle B_{1,q-k}\rangle_{Q,r_{1}}\langle \mathcal{A}_{Q}g_{2}\cdot h_{Q}\rangle_{Q,s_{1}}.
$$
Further, note that 
\begin{align*}
\langle \mathcal{A}_{Q}g_{2}\cdot h_{Q}\rangle_{Q,s_{1}}=\Big(\frac{1}{|Q|}\int_{Q}(\mathcal{A}_{Q}g_{2})^{s_{1}}(x)h^{s_{1}}_{Q}(x)dx\Big)^{\frac{1}{s_{1}}}&\leq \| \mathcal{A}_{Q}g_{2}\|_{L^{\infty}}\langle h_{Q}\rangle_{Q,s_{1}}\\
 &\lesssim \langle f_{2}\rangle_{Q_{0},r_{2}}\langle h_{Q}\rangle_{Q,s_{1}}
\end{align*}
and $\langle B_{1,q-k}\rangle_{Q,r_{1}}\lesssim \langle \chi_{F_{1,q,k}}\rangle_{Q,r_{1}}+\langle f_{1}\rangle_{Q_{0},r_{1}}\langle \chi_{E_{1,q,k}}\rangle_{Q,r_{1}}$, where $F_{1,q,k}$ are disjoint subsets of $F_{1}$ and $E_{1,q,k}$ are disjoint subsets of $Q_{0}$. Putting these estimates together we get
\begin{align*}
BG &\lesssim  \langle f_{2}\rangle_{Q_{0},r_{2}}\sum^{\infty}_{k=1}2^{-\eta k}\sum_{Q\in \mathcal{D}_{0}}|Q|\langle \chi_{F_{1,q,k}}\rangle_{Q,r_{1}}\langle h_{Q}\rangle_{Q,s_{1}}\\
&\quad + \langle f_{1}\rangle_{Q_{0},r_{1}}\langle f_{2}\rangle_{Q_{0},r_{2}}\sum^{\infty}_{k=1}2^{-\eta k}\sum_{Q\in\mathcal{D}_{0}}|Q|\langle \chi_{E_{1,q,k}}\rangle_{Q,r_{1}}\langle h_{Q}\rangle_{Q,s_{1}}=: BG_1 + BG_2.
\end{align*}
We estimate both the terms separately. For the term $BG_1$, note that 
$(\frac{1}{r_{1}},\frac{1}{s_{1}})$ in the interior of $L_{n}$, which implies that $\frac{1}{r_{1}}+\frac{1}{s_{1}}>1$. Choose $\tau>0$ such that $\frac{1}{r_{1}}-\tau+\frac{1}{s_{1}}=1.$ Write $\frac{1}{r_{1}}-\tau=\frac{1}{\dot{r}_{1}}$ and note that $\dot{r}_{1}>r_{1}$. We have, by using that $F_{1,q,k}\subseteq F_1$ and the stopping time condition for the function $f_1$,
\begin{align*}
\langle \chi_{F_{1,q,k}}\rangle_{Q,r_{1}} = \Big(\frac{1}{|Q|}\int_{Q}\chi^{r_{1}}_{F_{1,q,k}}\Big)^{\frac{1}{\dot{r_{1}}}}\Big(\frac{1}{|Q|}\int_{Q}\chi^{r_{1}}_{F_{1,q,k}}\Big)^{\tau}&\leq  \Big(\frac{1}{|Q|}\int_{Q}\chi^{r_{1}}_{F_{1,q,k}}\Big)^{\frac{1}{\dot{r_{1}}}}\Big(\frac{1}{|Q|}\int_{Q}f_{1}^{r_{1}}\Big)^{\tau}\\
& \lesssim  \Big(\frac{1}{|Q|}\int_{Q}\chi^{r_{1}}_{F_{1,q,k}}\Big)^{\frac{1}{\dot{r_{1}}}} \langle f_{1}\rangle^{\tau r_{1}}_{Q_{0},r_{1}}.
\end{align*}
Therefore, as  $t\geq s_1$,
\begin{align}
\label{eq 2}
\notag BG_1 &= \langle f_{2}\rangle_{Q_{0},r_{2}}\sum^{\infty}_{k=1}2^{-\eta k}\sum_{Q\in \mathcal{D}_{0}}|Q|\langle \chi_{F_{1,q,k}}\rangle_{Q,r_{1}}\langle h_{Q}\rangle_{Q,s_{1}}\\
\notag&\lesssim   \langle f_{1}\rangle^{\tau r_{1}}_{Q_{0},r_{1}}\langle f_{2}\rangle_{Q_{0},r_{2}}\sum^{\infty}_{k=1}2^{-\eta k}\sum_{Q\in\mathcal{D}_{0}}|Q|\Big(\frac{1}{|Q|}\int_{Q}\chi_{F_{1,q,k}}\Big)^{\frac{1}{\dot{r}_{1}}}\Big(\frac{1}{|Q|}\int_{Q}h^{s_{1}}_{Q}\Big)^{\frac{1}{s_{1}}}\\
\notag&\lesssim   \langle f_{1}\rangle^{\tau r_{1}}_{Q_{0},r_{1}}\langle f_{2}\rangle_{Q_{0},r_{2}}\sum^{\infty}_{k=1}2^{-\eta k}\Big(\sum_{Q\in\mathcal{D}_{0}}\int_{Q}\chi_{F_{1,q,k}}\Big)^{\frac{1}{\dot{r}_{1}}}\Big(\sum_{Q\in\mathcal{D}_{0}}\int_{Q}h^{s_{1}}\chi_{B_{Q}}\Big)^{\frac{1}{s_{1}}}\\
\notag&\lesssim  \langle f_{1}\rangle^{\tau r_{1}}_{Q_{0},r_{1}}\langle f_{2}\rangle_{Q_{0},r_{2}}\langle f_{1}\rangle^{\frac{r_{1}}{\dot{r}_{1}}}_{Q_{0},r_{1}}|Q_{0}|^{\frac{1}{\dot{r}_{1}}}\langle h\rangle_{Q_{0},s_{1}}|Q_{0}|^{\frac{1}{s_{1}}}\\
&\leq \langle f_{1}\rangle_{Q_{0},r_{1}}\langle f_{2}\rangle_{Q_{0},r_{2}}\langle h\rangle_{Q_{0},t}|Q_{0}|.
\end{align}
Next, the term $BG_2$ may be estimated as follows. Since $\dot{r}_{1}>r_{1}$, we have $\langle \chi_{E_{1,q,k}}\rangle_{Q,r_{1}}\leq \langle \chi_{E_{1,q,k}}\rangle_{Q,\dot{r}_{1}}$. Consider 
 \begin{align}
BG_2 &= \nonumber \langle f_{1}\rangle_{Q_{0},r_{1}}\langle f_{2}\rangle_{Q_{0},r_{2}}\sum^{\infty}_{k=1}2^{-\eta k}\sum_{Q\in\mathcal{D}_{0}}|Q|\langle \chi_{E_{1,q,k}}\rangle_{Q,r_{1}}\langle h_{Q}\rangle_{Q,s_{1}}\\
&\leq \nonumber \langle f_{1}\rangle_{Q_{0},r_{1}}\langle f_{2}\rangle_{Q_{0},r_{2}}\sum^{\infty}_{k=1}2^{-\eta k}\sum_{Q\in\mathcal{D}_{0}}|Q|\langle \chi_{E_{1,q,k}}\rangle_{Q,\dot{r}_{1}}\langle h_{Q}\rangle_{Q,s_{1}}\\
&\leq  \nonumber \langle f_{1}\rangle_{Q_{0},r_{1}}\langle f_{2}\rangle_{Q_{0},r_{2}}\sum^{\infty}_{k=1}2^{-\eta k}\Big(\sum_{Q\in\mathcal{D}_{0}}\int_{Q}\chi_{E_{1,q,k}}\Big)^{\frac{1}{\dot{r}_{1}}}\Big(\sum_{Q\in\mathcal{D}_{0}}\int_{Q}h^{s_{1}}\chi_{B_{Q}}\Big)^{\frac{1}{s_{1}}}\\
&\lesssim \nonumber  \langle f_{1}\rangle_{Q_{0},r_{1}}\langle f_{2}\rangle_{Q_{0},r_{2}}|Q_{0}|^{\frac{1}{\dot{r}_{1}}}\langle h\rangle_{Q_{0},s_{1}}|Q_{0}|^{\frac{1}{s_{1}}}\\
&\leq \label{eq 3} \langle f_{1}\rangle_{Q_{0},r_{1}}\langle f_{2}\rangle_{Q_{0},r_{2}}\langle h\rangle_{Q_{0},t}|Q_{0}|\quad  \text{as}\quad  t\geq s_1.
\end{align}
Estimates \eqref{eq 2} and \eqref{eq 3} yield the desired result for the term $BG$. 
The estimate for the third term $GB$ follows similarly. 

\textbf{Estimate for $BB$ (both functions bad) part.} We have
\begin{align*}
&BB = |\sum_{Q\in\mathcal{D}_{0}}\langle \mathcal{A}_{Q}b_{1}\mathcal{A}_{Q}b_{2},h_{Q}\rangle|=\Big|\sum_{Q\in\mathcal{D}_0}\int b_{1}(x)\mathcal{A}_{Q}^{*}(\mathcal{A}_{Q}b_{2}\cdot h_{Q})(x)dx\Big|\\
&\lesssim\Big|\sum_{Q\in\mathcal{D}_{0}}\sum^{\infty}_{k=1}\sum_{P\in B_1(q-k)}\int_{P}B_{1,q-k}(x)\mathcal{A}_{Q}^{*}(\mathcal{A}_{Q}b_{2}\cdot h_{Q})(x)dx \Big|\\
&\leq \sum^{\infty}_{k=1}\sum_{Q\in\mathcal{D}_{0}}\sum_{P\in B_1(q-k)}\frac{1}{|P|} \Big|\int_{P}\int_{P}B_{1,q-k}(x)[\mathcal{A}^{*}_{Q}(\mathcal{A}_{Q}b_{2}\cdot h_{Q})(x)-\mathcal{A}^{*}_{Q}(\mathcal{A}_{Q}b_{2}\cdot h_{Q})(x')]dx dx'\Big|\\
	&\lesssim 
\sum^{\infty}_{k=1}\sum_{Q\in\mathcal{D}_{0}}\frac{1}{|P_{0}|}\Big|\int_{P_{0}}\int_{Q}B_{1,q-k}(x)[\mathcal{A}^{*}_{Q}(\mathcal{A}_{Q}b_{2}\cdot h_{Q})(x)-\tau_{y}\mathcal{A}^{*}_{Q}(\mathcal{A}_{Q}b_{2}\cdot h_{Q})(x)]dxdy\Big|\\ 
&\lesssim  \sum^{\infty}_{k=1}\sum_{Q\in\mathcal{D}_{0}}\frac{1}{|P_{0}|} \int_{P_{0}}\Big|\int_{Q}(\mathcal{A}_{Q}B_{1,q-k}-\tau_{-y}\mathcal{A}_{Q}B_{1,q-k})(x)\mathcal{A}_{Q}b_{2}(x)h_{Q}(x)dx\Big|dy\\ 
&\lesssim \sum^{\infty}_{k=1}\sum_{Q\in\mathcal{D}_{0}}\frac{1}{|P_{0}|}\int_{P_{0}}\|\mathcal{A}_{Q}B_{1,q-k}-\tau_{-y}\mathcal{A}_{Q}B_{1,q-k})\|_{L^{s_1'}}\| \mathcal{A}_{Q}b_{2}\|_{L^{s'_{2}}}\| h_{Q}\|_{L^{t}}dy\\
&\lesssim  \sum^{\infty}_{k=1}\sum_{Q\in\mathcal{D}_{0}}\frac{1}{|P_{0}|}\int_{P_{0}}\Big(\frac{|y|}{l_{Q}}\Big)^{\eta}|Q|^{1-\frac{1}{s_{1}}}\langle B_{1,q-k}\rangle_{Q,r_{1}}\| \mathcal{A}_{Q}b_{2}\|_{L^{s'_{2}}}\| h_{Q}\|_{L^{t}}dy,
\end{align*}
where, in the last inequality, we have used  \cite[Theorem 2.1]{Lacey} for $\big(\frac{1}{r_1},\frac{1}{s_1'}\big)$ in the interior of $L_n'=\big\{\big(\frac{1}{r},\frac{1}{s}\big): \big(\frac{1}{r},1-\frac{1}{s}\big)\in L_n\big\}$ and H\"{o}lder's inequality with exponents $\frac{1}{t'}=\frac{1}{s'_{1}}+\frac{1}{s'_{2}}$. 
This yields
$$
BB \leq \sum^{\infty}_{k=1}2^{-\eta k}\sum_{Q\in \mathcal{D}_{0}}|Q|^{1-\frac{1}{s_{1}}}\langle B_{1,q-k}\rangle_{Q,r_{1}}\| \mathcal{A}_{Q}b_{2}\|_{L^{s'_{2}}}\Vert h_{Q}\|_{L^{t}}.
$$
Next, we make use of \cite[Lemma 2.3]{Lacey} to estimate the quantity 
$$
\| \mathcal{A}_{Q}b_{2}\|_{L^{s'_{2}}}=\sup_{\|\psi\|_{L^{s_2}}=1}| \langle \mathcal{A}_{Q}b_{2}, \psi\rangle|.
$$ 
Indeed, 
\begin{align*}
	| \langle \mathcal{A}_{Q}b_{2}, \psi\rangle|&=\Big| \int \mathcal{A}_{Q}b_{2}(x)\psi (x)dx\Big|\lesssim\Big|\sum^{\infty}_{j=1}\sum_{P\in B_2(q-j)}\int_{P}B_{2,q-j}(x)\mathcal{A}^{*}_{Q}\psi(x)dx\Big|\\
	&=\Big| \sum^{\infty}_{j=1}\sum_{P\in B_2(q-j)}\frac{1}{|P|}\int_{P}\int_{P}B_{2,q-j}(x)[\mathcal{A}^{*}_{Q}\psi(x)-\mathcal{A}^{*}_{Q}\psi(x')]dxdx'\Big|\\
	&\lesssim \sum^{\infty}_{j=1}\frac{1}{|P_{0}|}\int_{P_{0}}\Big| \int_{Q}B_{2,q-j}(x)[\mathcal{A}^{*}_{Q}\psi(x)-\tau_{y}\mathcal{A}^{*}_{Q}\psi(x)]dx\Big| dy\\
	&\lesssim \sum^{\infty}_{j=1}\frac{1}{|P_{0}|}\int_{P_{0}}|Q|^{1-\frac{1}{s_{2}}}\Big(\frac{|y|}{l_{Q}}\Big)^{\eta}\langle B_{2,q-j}\rangle_{Q,r_{2}}\| \psi\|_{L^{s_{2}}}dy.
	\end{align*}
Thus we obtain the following estimate. 
\begin{eqnarray}
BB & \lesssim & \label{b1b2}\sum^{\infty}_{k,j=1}2^{-\eta (k+j)}\sum_{Q\in\mathcal{D}_{0}}\frac{|Q|^{2}}{|Q|^{\frac{1}{s_{1}}+\frac{1}{s_{2}}}}\langle B_{1,q-k}\rangle_{Q,r_{1}}\langle B_{2,q-j}\rangle_{Q,r_{2}}\Big(\int_{Q}h^{t}_{Q}\Big)^{\frac{1}{t}},	
\end{eqnarray}
where we know that 
\begin{equation}
\label{B1}
\langle B_{1,q-k}\rangle_{Q,r_{1}}\lesssim \langle \chi_{F_{1,q,k}}\rangle_{Q,r_{1}}+\langle f_{1}\rangle_{Q_{0},r_{1}}\langle \chi_{E_{1,q,k}}\rangle_{Q,r_{1}} 
\end{equation}
and 
\begin{equation}
\label{B2}
\langle B_{2,q-j}\rangle_{Q,r_{2}}\lesssim \langle \chi_{F_{2,q,j}}\rangle_{Q,r_{2}}+\langle f_{2}\rangle_{Q_{0},r_{2}}\langle \chi_{E_{2,q,j}}\rangle_{Q,r_{2}}.
\end{equation}
Here, $E_{1,q,k}$, $E_{2,q,j}$ are disjoint subsets of $Q_{0}$ and $F_{1,q,k}$, $F_{2,q,j}$ are disjoint subsets of $F_1$, $F_2$, respectively. 

Substituting \eqref{B1} and \eqref{B2} into \eqref{b1b2}, we get the following four terms, which will be estimated separately.  
\begin{align*}
BB & \lesssim  \label{b12}\sum^{\infty}_{k,j=1}2^{-\eta (k+j)}\sum_{Q\in\mathcal{D}_{0}}\frac{|Q|^{2}}{|Q|^{\frac{1}{s_{1}}+\frac{1}{s_{2}}}}\langle \chi_{F_{1,q,k}}\rangle_{Q,r_{1}} \langle \chi_{F_{2,q,j}}\rangle_{Q,r_{2}}\Big(\int_{Q}h^{t}_{Q}\Big)^{\frac{1}{t}}\\
&\quad + \sum^{\infty}_{k,j=1}2^{-\eta (k+j)}\sum_{Q\in\mathcal{D}_{0}}\frac{|Q|^{2}}{|Q|^{\frac{1}{s_{1}}+\frac{1}{s_{2}}}}\langle \chi_{F_{1,q,k}}\rangle_{Q,r_{1}} \langle f_{2}\rangle_{Q_{0},r_{2}}\langle \chi_{E_{2,q,j}}\rangle_{Q,r_{2}}\Big(\int_{Q}h^{t}_{Q}\Big)^{\frac{1}{t}}\\
&\quad +\sum^{\infty}_{k,j=1}2^{-\eta (k+j)}\sum_{Q\in\mathcal{D}_{0}}\frac{|Q|^{2}}{|Q|^{\frac{1}{s_{1}}+\frac{1}{s_{2}}}}\langle f_{1}\rangle_{Q_{0},r_{1}}\langle \chi_{E_{1,q,k}}\rangle_{Q,r_{1}} \langle \chi_{F_{2,q,j}}\rangle_{Q,r_{2}}\Big(\int_{Q}h^{t}_{Q}\Big)^{\frac{1}{t}}\\
&\quad + \sum^{\infty}_{k,j=1}2^{-\eta (k+j)} \sum_{Q\in\mathcal{D}_{0}}\frac{|Q|^{2}}{|Q|^{\frac{1}{s_{1}}+\frac{1}{s_{2}}}}\langle f_{1}\rangle_{Q_{0},r_{1}}\langle \chi_{E_{1,q,k}}\rangle_{Q,r_{1}} \langle f_{2}\rangle_{Q_{0},r_{2}}\langle \chi_{E_{2,q,j}}\rangle_{Q,r_{2}}\Big(\int_{Q}h^{t}_{Q}\Big)^{\frac{1}{t}}\\
&=: BB_1+BB_2+BB_3+BB_4. 
\end{align*}

{\bf Estimate for the first term $BB_1$.} At this point, one has to deal with the cases $\frac{1}{r_{1}}+\frac{1}{r_{2}}>1$ and $\frac{1}{r_{1}}+\frac{1}{r_{2}}\leq 1$ separately. Let us start with the case $\frac{1}{r_{1}}+\frac{1}{r_{2}}>1$.
Choose positive numbers $\tau_1$ and $\tau_2$ such that $\frac{1}{r_{1}}+\frac{1}{r_{2}}=1+\tau_{1}+\tau_{2}$ and denote $\frac{1}{r_{i}}-\tau_{i}=\frac{1}{\dot{r}_{i}}$, $i=1,2$. Note that $\frac{1}{\dot{r}_{1}}+\frac{1}{\dot{r}_{2}}=1$. We have 
\begin{align*}
BB_1  
&\lesssim  \langle h\rangle_{Q_{0},t}\sum^{\infty}_{k,j=1}2^{-\eta (k+j)}\sum_{Q\in\mathcal{D}_{0}}|Q|^{1-\frac{1}{r_{1}}-\frac{1}{r_{2}}}\Big(\int_{Q}\chi_{F_{1,q,k}}\Big)^{\frac{1}{r_{1}}}\Big(\int_{Q}\chi_{F_{2,q,j}}\Big)^{\frac{1}{r_{2}}}\\
&\lesssim  \langle h\rangle_{Q_{0},t}\langle f_{1}\rangle^{\tau_{1} r_{1}}_{Q_{0},r_{1}}\langle f_{2}\rangle^{\tau_{2} r_{2}}_{Q_{0},r_{2}}\langle f_{1}\rangle^{\frac{r_{1}}{\dot{r}_{1}}}_{Q_{0},r_{1}}\langle f_{2}\rangle^{\frac{r_{2}}{\dot{r}_{2}}}_{Q_{0},r_{2}}|Q_{0}|=\langle f_{1}\rangle_{Q_{0},r_{1}}\langle f_{2}\rangle_{Q_{0},r_{2}}\langle h\rangle_{Q_{0},t}|Q_{0}|.
\end{align*}
The case $\frac{1}{r_1}+\frac{1}{r_2}=1$ follows similarly with $\tau_1=\tau_2=0.$ 

Let us turn to the case when $\frac{1}{r_{1}}+\frac{1}{r_{2}}<1$.
 Observe that $\frac{1}{r_{1}}+\frac{1}{r_{2}}+\frac{1}{t}>1$. Now, choose $\tau_{1},\tau_{2}>0$ such that $\frac{1}{r_{1}}+\frac{1}{r_{2}}+\frac{1}{t}=1+\tau_{1}+\tau_{2}$. This implies $\frac{1}{\dot{r}_{1}}+\frac{1}{\dot{r}_{2}}+\frac{1}{t}=1$, where $\frac{1}{\dot{r}_{i}}=\frac{1}{r_{i}}-\tau_{i}$, for $i=1,2$.
\begin{align*}
	&BB_{1}=\sum^{\infty}_{k,j=1}2^{-\eta(k+j)}\sum_{Q\in\mathcal{D}_{0}} |Q|^{1-\frac{1}{t}}\langle \chi_{F_{1,q,k}}\rangle_{Q,r_{1}} \langle \chi_{F_{2,q,j}}\rangle_{Q,r_{2}}\Big(\int_{Q}h^{t}_{Q}\Big)^{\frac{1}{t}}\\
	&\lesssim\langle f_{1}\rangle^{\tau_{1}r_{1}}_{Q_{0},r_{1}}\langle f_{2}\rangle^{\tau_{2}r_{2}}_{Q_{0},r_{2}}\sum^{\infty}_{k,j=1}2^{-\eta(k+j)}\sum_{Q\in\mathcal{D}_{0}}\Big(\int_{Q}\chi_{F_{1,q,k}}\Big)^{\frac{1}{\dot{r}_{1}}}\Big(\int_{Q}\chi_{F_{2,q,j}}\Big)^{\frac{1}{\dot{r}_{2}}}\Big(\int_{Q}h^{t}_{Q}\Big)^{\frac{1}{t}}\\
	&\lesssim\langle f_{1}\rangle_{Q_{0},r_{1}}\langle f_{2}\rangle_{Q_{0},r_{2}}\langle h\rangle_{Q_{0},t}|Q_{0}|.
	\end{align*}
In the last inequality we have used the H\"older's inequality with respect to $\dot{r_{1}},\dot{r_{2}}$ and  $t$.  

The latter case is analogous for the remaining three terms $BB_2$, $BB_3$ and $BB_4$, hence we will focus on the estimates only for the case when $\frac{1}{r_{1}}+\frac{1}{r_{2}}>1$.  

{\bf Estimate for the second and third terms $BB_2$ and $BB_3$.} 
The estimates for $BB_2$ and $BB_3$ may be obtained in a similar fashion. We provide here the argument for the term $BB_3$. 

Since $\frac{1}{r_{1}}+\frac{1}{r_{2}}>1$, we can choose a positive number $\tau$ such that $\frac{1}{r_{1}}-\tau +\frac{1}{r_{2}}=1$. Denote $\frac{1}{r_{1}}-\tau =\frac{1}{\dot r_{1}}$ and note that  $\frac{1}{\dot{r}_{1}}+\frac{1}{r_{2}}=1$ and $r_{1}<\dot{r}_{1}$. Then we have,
\begin{align*}
BB_3
&\lesssim  \langle f_{1}\rangle_{Q_{0},r_{1}}\langle h\rangle_{Q_{0},t}\sum^{\infty}_{k,j=1}2^{-\eta (k+j)}\sum_{Q\in\mathcal{D}_{0}}|Q|^{1-\frac{1}{r_{1}}-\frac{1}{r_{2}}+\tau}\Big(\int_{Q}\chi_{E_{1,q,k}}\Big)^{\frac{1}{\dot{r}_{1}}}\Big(\int_{Q}\chi_{F_{2,q,j}}\Big)^{\frac{1}{r_{2}}}\\
&\leq  \langle f_{1}\rangle_{Q_{0},r_{1}}\langle h\rangle_{Q_{0},t}\sum^{\infty}_{k,j=1}2^{-\eta (k+j)}\Big(\sum_{Q\in\mathcal{D}_{0}}\int_{Q}\chi_{E_{1,q,k}}\Big)^{\frac{1}{\dot{r}_{1}}}\Big(\sum_{Q\in\mathcal{D}_{0}}\int_{Q}\chi_{F_{2,q,j}}\Big)^{\frac{1}{r_{2}}}\\
&\lesssim  \langle f_{1}\rangle_{Q_{0},r_{1}}\langle h\rangle_{Q_{0},t}|Q_{0}|^{\frac{1}{\dot{r}_{1}}}\langle f_{2}\rangle_{Q_{0},r_{2}}|Q_{0}|^{\frac{1}{r_{2}}}=  \langle f_{1}\rangle_{Q_{0},r_{1}}\langle f_{2}\rangle_{Q_{0},r_{2}}\langle h\rangle_{Q_{0},t}|Q_{0}|.
\end{align*} 

{\bf Estimate for the fourth term $BB_4$.} Choose $\tau_1$ and $\tau_2$ as in the case $BB_1$.  
Consider  
\begin{align*}
BB_4 &\lesssim  \langle f_{1}\rangle_{Q_{0},r_{1}}\langle f_{2}\rangle_{Q_{0},r_{2}}\langle h\rangle_{Q_{0},t} \sum^{\infty}_{k,j=1}2^{-\eta (k+j)}\sum_{Q\in\mathcal{D}_{0}}|Q|^{1-\frac{1}{r_{1}}-\frac{1}{r_{2}}}\Big(\int_{Q}\chi_{E_{1,q,k}}\Big)^{\frac{1}{r_{1}}}\Big(\int_{Q}\chi_{E_{2,q,j}}\Big)^{\frac{1}{r_{2}}}\\
&\leq  \langle f_{1}\rangle_{Q_{0},r_{1}}\langle f_{2}\rangle_{Q_{0},r_{2}}\langle h\rangle_{Q_{0},t}\sum^{\infty}_{k,j=1}2^{-\eta (k+j)}\Big(\sum_{Q\in\mathcal{D}_{0}}\int_{Q}\chi_{E_{1,q,k}}\Big)^{\frac{1}{\dot{r}_{1}}}\Big(\sum_{Q\in\mathcal{D}_{0}}\int_{Q}\chi_{E_{2,q,j}}\Big)^{\frac{1}{\dot{r}_{2}}}\\
&\lesssim  \langle f_{1}\rangle_{Q_{0},r_{1}}\langle f_{2}\rangle_{Q_{0},r_{2}}\langle h\rangle_{Q_{0},t}|Q_{0}|.
\end{align*}

This completes the proof of Lemma~\ref{keylac} for the lacunary bilinear spherical maximal operator. 

For the case of the full bilinear spherical maximal operator, recall the notation introduced in the proof of Theorem~\ref{mainthm2:lac}. We use the Calder\'{o}n-Zygmund decomposition to write $f_i=g_i+b_i$, $~i=1,2$ to get the following 
\begin{align*}
|\sum_{Q\in \mathcal{D}_0}\langle \widetilde{\mathcal{M}}_{Q}(f_{1},f_{2}), h_{Q}\rangle|
&\leq  |\sum_{Q\in \mathcal{D}_0}\langle \widetilde{\mathcal{M}}_{Q}(g_{1},g_{2}), h_{Q}\rangle|+|\sum_{Q\in \mathcal{D}_0}\langle \widetilde{\mathcal{M}}_{Q}(g_{1},b_{2}), h_{Q}\rangle|\\ 
	 	&+|\sum_{Q\in \mathcal{D}_0}\langle \widetilde{\mathcal{M}}_{Q}(b_{1},g_{2}), h_{Q}\rangle|+|\sum_{Q\in \mathcal{D}_0}\langle \widetilde{\mathcal{M}}_{Q}(b_{1},b_{2}), h_{Q}\rangle|\\
&=: GG+GB+BG+BB. 
\end{align*}
\noindent
\textbf{Estimate for $GG$ (both functions good).} In this case we have
 \begin{align*}
\sum_{Q\in \mathcal{D}_0}|\langle \widetilde{\mathcal{M}}_{Q}(g_{1},g_{2}), h_{Q}\rangle| 
&\leq  \sum_{Q\in\mathcal{D}_0}\| \widetilde{M}_{Q}g_{1}\|_{L^{\infty}}\| \widetilde{M}_{Q}g_{2}\Vert_{L^{\infty}}\| h_{Q}\|_{L^{1}}\\
&\lesssim  \langle f_{1}\rangle_{Q_{0},r_{1}}\langle f_{2}\rangle_{Q_{0},r_{2}}\sum_{Q\in\mathcal{D}_0}\int |h(x)|\chi_{B_{Q}}(x)dx\\
&\lesssim  \langle f_{1}\rangle_{Q_{0},r_{1}}\langle f_{2}\rangle_{Q_{0},r_{2}}\langle h\rangle_{Q_{0}}|Q_{0}|.
\end{align*}
\noindent
\textbf{Estimate for $BG$ (one function good and one function bad).} We have
\begin{align*}
&|\sum_{Q\in \mathcal{D}_0}\langle \widetilde{\mathcal{M}}_{Q}(b_{1},g_{2}), h_{Q}\rangle|
\leq \sum_{Q\in\mathcal{D}_0}\Big|\int b_{1}(x)\mathcal{A}_{t_Q}^{*}(\mathcal{A}_{t_Q}g_{2}\cdot h_{Q})(x)dx\Big|\\
&\lesssim  \sum_{k\geq1}\sum_{Q\in\mathcal{D}_0}\frac{1}{|P_{0}|}\int_{P_{0}}\Big|\int_{Q}B_{1,q-k}(x)[\mathcal{A}_{t_{Q}}^{*}(\mathcal{A}_{t_{Q}}g_{2}\cdot h_{Q})(x)-\tau_{y}\mathcal{A}_{t_{Q}}^{*}(\mathcal{A}_{t_{Q}}g_{2}\cdot h_{Q})(x)]dxdy\Big|\\
&\lesssim  \sum_{k\geq1}\sum_{Q\in\mathcal{D}_0}\frac{1}{|P_{0}|}\int_{P_{0}}\Big(\frac{|y|}{l_{Q}}\Big)^{\eta}|Q|\langle B_{1,q-k}\rangle_{Q,r_{1}}\langle \mathcal{A}_{t_{Q}}g_{2}\cdot h_{Q}\rangle_{Q,s_{1}}dy\\
&\lesssim  \sum_{k\geq1}2^{-k\eta}\sum_{Q\in\mathcal{D}_0}|Q|\langle B_{1,q-k}\rangle_{Q,r_{1}}\langle \mathcal{A}_{t_{Q}}g_{2}\cdot h_{Q}\rangle_{Q,s_{1}},
\end{align*}
where we have used \cite[Theorem 3.2]{Lacey} in the second to last inequality. Next, observe that 
$$\langle \mathcal{A}_{t_{Q}}g_{2}\cdot h_{Q}\rangle_{Q,s_{1}}\lesssim \langle f_{2}\rangle_{Q_{0},r_{2}}\langle h_{Q}\rangle_{Q,s_{1}}.$$
This point onward, we can follow the proof in the case of bilinear lacunary spherical maximal function and get the desired estimates. We skip the details. 

This completes the proof of Lemma \ref{keylac}. 
 
\section{Necessary conditions for the sparse domination} 
\label{neces}

In this section we discuss several relations involving the exponents $r_1, r_2, s_1, s_2$ and $t$ and show that they are necessary conditions for the validity of the sparse domination of the bilinear (both lacunary and full) spherical maximal functions. We make use of examples in the spirit of Knapp and Stein \cite{Stein}, the approach in the (linear) sparse domination setting is developed in \cite{Lacey}. 
\subsection{Sparse form for $\mathcal M_{\operatorname{lac}}$} 
Let $f_{1}=f_{2}=\chi_{||x|-1|<\delta}$ and $h=\chi_{|x|\leq c\delta}$ for some $0<\delta<1/4$ and $c\in (0,\frac{1}{2})$. Then we get that 
$\mathcal{A}_{1}f_{1}(x)\geq c h(x)$. Therefore, the sparse domination for the operator $\mathcal M_{\operatorname{lac}}$ implies that 
$$\delta^{n}\lesssim \sum_{Q\in\mathcal{S}}|Q|\langle f_{1}\rangle_{Q,r_{1}}\langle f_{2}\rangle_{Q,r_{2}}\langle h\rangle_{Q,t},$$
where $\mathcal{S}$ is a sparse collection. 

Observe that in the estimate above, in order to make non-trivial contribution to the term on the right hand side, the cube $Q\in \mathcal{S}$ must necessarily  intersect with the supports of $f_{1},f_{2}$ and $h$. Therefore, we may assume that each $Q$ contains the set $\{x: |x|<2\}$. Further, the contribution from a cube decreases as its size increases, therefore it suffices to assume that $\mathcal{S}$ consists of one such cube $Q$. We have the estimate 
$$
\delta^{n}\lesssim \|f_{1}\|_{L^{r_{1}}} \| f_{2}\|_{L^{r_{2}}}\| h\|_{L^{t}}\lesssim \delta^{\frac{1}{r_{1}}+\frac{1}{r_{2}}+\frac{n}{t}}.
$$
Since $\delta>0$ can be chosen arbitrarily small, we get that 
$$
{\frac{1}{r_{1}}+\frac{1}{r_{2}}+\frac{n}{t}}\leq n.
$$ 
Note that the estimate above forces the condition $t>1$. Substituting the value of $t$ in terms of $s_1$ and $s_2$, we get the following necessary condition
\begin{equation}
\label{condl1}
\frac{1}{r_{1}}+\frac{n}{s_{1}}+\frac{1}{r_{2}}+\frac{n}{s_{2}} \leq 2n.
\end{equation}
In a similar fashion, one can show that if $f_{1}=f_{2}=\chi_{|x|<\delta}$ and $h=\chi_{||x|-1|<c\delta}$ for some $0<\delta<1/4$ and $0<c<\frac{1}{2}$, then we get that $\mathcal{A}_{1}f_{1}(x)\geq c\delta^{n-1}h(x)$. This gives us another necessary condition, namely $\frac{n}{r_{1}}+\frac{n}{r_{2}}+\frac{1}{t}\leq 2n-1$. This would mean that 
\begin{equation}
\label{condl2}
\frac{n}{r_{1}}+\frac{1}{s_{1}}+\frac{n}{r_{2}}+\frac{1}{s_{2}}\leq 2n.
\end{equation}
The conditions ~\eqref{condl1} and \eqref{condl2} imply that 
both of $(\frac{1}{r_{i}},\frac{1}{s_{i}})$, $i=1,2$, 
cannot lie outside of the triangle $L_{n}$.

Next, take $f_{1}=\chi_{|x|<\delta}$, $f_{2}=\chi_{|x|<2}$ (also interchanging $f_1$ and $f_2$) and $h=\chi_{||x|-1|<c\delta}$ for some $0<\delta<1$ and $0<c<\frac{1}{2}$ and observe that $$\delta^{n-1}\delta\lesssim \delta^{\frac{n}{r_{1}}}\delta^{\frac{1}{t}}.$$
This yields that $\frac{n}{r_{i}}+\frac{1}{t}\leq n$, $i=1,2$.
Similarly, by taking $f_{1}=\chi_{||x|-1|<\delta}$, $f_{2}=\chi_{|x|<2}$, $h=\chi_{|x|<c\delta}$ for some  $0<\delta<1/4$ and $0<c<\frac{1}{2}$ and interchanging the roles of $f_1$ and $f_2$ we get that $
\frac{1}{r_{i}}+\frac{n}{t}\leq n$, $i=1,2$.

Putting the above two conditions together we get the following condition. 
\begin{equation}
\label{condl3}\max\Big\{\frac{n}{r_{i}}+\frac{1}{t},\frac{1}{r_{i}}+\frac{n}{t}\Big\} \leq n, \quad i=1,2.
\end{equation}
The conditions~(\ref{condl1}), (\ref{condl2}) and (\ref{condl3}) must necessarily be satisfied for the sparse domination of the operator $\mathcal M_{\operatorname{lac}}$ to hold. However, due to the techniques of our proof we are getting an additional condition on the exponents, namely, 
\begin{equation}
\label{extra}
\frac{1}{r_1}+\frac{1}{r_2}<1.
\end{equation}

\subsection{Sparse form for $\mathcal M_{\operatorname{full}}$}
Consider $f_{1}=|x|^{1-n}(\log \frac{1}{|x|})^{-1}\chi_{|x|<\frac{3}{4}}$ and $f_{2}=\chi_{|x|<1}$ and note that $f_{1}\in L^{r_{1}}(\R^n)$ for $1<r_{1}\leq \frac{n}{n-1}$. It is easy to verify that $\mathcal{M}_{\operatorname{full}}(f_{1},f_{2})$ is infinite on a set of positive measure. This gives us the condition that  $\frac{1}{r_{1}}< \frac{n-1}{n}$.  Using the symmetry between $f_{1}$ and $f_{2},$ we also have that $\frac{1}{r_{2}}<\frac{n-1}{n}$. Next, we observe that both of $(\frac{1}{r_{i}},\frac{1}{s_{i}})$, $i=1,2$ cannot lie above the line segment $P_{1}P_{4}$ in $F_{n}$, see Figure \ref{LnFn}. This can be proved by considering the functions $f_{1}=f_{2}=\chi_{||x|-1|<\delta}$ and $h=\chi_{|x|\leq c\delta}$ for some $0<\delta<1/4$ and $c\in (0,\frac{1}{2})$. This is same as in the case of lacunary maximal function. We omit the details. 

Consider $f_{1}=f_{2}=\chi_{R_{1}}$ and $h=\chi_{R_{2}}$, where $R_{1}=[-C\sqrt\delta, C\sqrt\delta]^{n-1}\times [-C\delta, C\delta]$ and $R_{2}=[-\sqrt\delta,\sqrt\delta]^{n-1}\times [\frac{4}{3},\frac{5}{3}]$. This yields 
\begin{equation*}
\langle \widetilde{\mathcal{M}}(f_{1},f_{2}),h\rangle \gtrsim \delta^{\frac{3(n-1)}{2}}.
\end{equation*}
The sparse domination of $\langle \widetilde{\mathcal{M}}(f_{1},f_{2}),h\rangle$ yields  
\begin{equation*}
\delta^{\frac{3(n-1)}{2}}\leq \delta^{\frac{n+1}{2r_{1}}}\delta^{\frac{n+1}{2r_{2}}}\delta^{\frac{n-1}{2t}}.
\end{equation*}
This gives us the condition 
\begin{equation}\label{condf1}
\frac{n+1}{r_{1}}+\frac{n-1}{s_{1}}+\frac{n+1}{r_{2}}+\frac{n-1}{s_{2}}\leq 4(n-1).
\end{equation}
Therefore, both of $(\frac{1}{r_{i}},\frac{1}{s_{i}})$, $i=1,2$, cannot lie above the line segment $P_{3}P_{4}$ in Figure \ref{LnFn}.


Also, the conditions
$\frac{1}{r_{i}}+\frac{n}{t}\leq n$, $i=1,2$, must be satisfied for the sparse domination of the full maximal function as they hold for the lacunary maximal function. Further, by considering $f_{1}=\chi_{R_{1}}$, $f_{2}=\chi_{B((0,0,....,\frac{4}{3}),2)}$ and  $h=\chi_{R_{2}}$, we obtain that
\begin{equation*}
\delta^{n-1}\lesssim \langle \mathcal{M}_{\operatorname{full}}(f_{1},f_{2}),h\rangle\lesssim \delta^{\frac{n+1}{2r_{1}}}\delta^{\frac{n-1}{2t}}.
\end{equation*} 
Therefore, we get that
\begin{equation*}
\frac{n+1}{r_{1}}+\frac{n-1}{t}\leq 2(n-1).
\end{equation*}
Interchanging the roles of $f_{1}$ and $f_{2}$, we also have that 
\begin{equation*}
\frac{n+1}{r_{2}}+\frac{n-1}{t}\leq 2(n-1).
\end{equation*}
These are necessary conditions on various parameters in order the sparse domination to hold for the full bilinear spherical maximal function. Restriction \eqref{extra} also arises in this case. 

\begin{remark}
The necessary condition \eqref{extra} arises because we need $\frac{1}{\rho_1}+\frac{1}{\rho_2}<1$ for the sparse domination in Theorem \ref{mainthm1:lac}. An inspection on the proof of Theorem \ref{mainthm1:lac} shows that this condition is not required for proving the sparse domination when the functions are characteristic functions, but for the general functions. We guess that this could give restricted weak-type weighted results for a better range of exponents.
\end{remark}

\section*{Acknowledgements}

The first author is supported by the Basque Government through the BERC 2018-2021 program, by Spanish Ministry of Economy and Competitiveness MINECO: BCAM Severo Ochoa excellence accreditation SEV-2017-2018 and through project MTM2017-82160-C2-1-P funded by (AEI/FEDER, UE) and acronym ``HAQMEC''. She also acknowledges the RyC project RYC2018-025477-I. The second author acknowledges the financial support from the Science and Engineering Research Board (SERB), Government of India, under the grant MATRICS: MTR/2017/000039/Math. The third author is supported by CSIR (NET), file no. 09/1020 (0094)/2016-EMR-I. 

All the three authors are thankful to Jos\'e Mar\'ia Martell and Bas Nieraeth for their suggestions and remarks. 


\end{document}